\newcommand{\cross}[3]{
    \draw[thick] (-0.5+#1, -#2) to (0.5+#1, -#2+1);
    \draw[thick] (-0.5+#1, -#2+1) to (0.5+#1, -#2);
    \foreach \j in {0,...,#3}{
    \ifthenelse{\equal{\j}{\the\numexpr #2-1} \OR \j = #2}{}{
    \draw[thick] (-0.5+#1,-\j ) to (0.5+#1,-\j);}
    }
}
\newcommand{\uncross}[3]{
    \draw[thick] [bend right = 90, looseness=1.25] (-0.5+#1, -#2) to (-0.5+#1, -#2+1);
    \draw[thick] [bend right = 90, looseness=1.25] (0.5+#1, -#2+1) to (0.5+#1, -#2);
    \foreach \j in {0,...,#3}{
    \ifthenelse{\equal{\j}{\the\numexpr #2-1} \OR \j = #2}{}{
    \draw[thick] (-0.5+#1,-\j ) to (0.5+#1,-\j);}
    }
}
\def\nodd{[n]_{\text{odd}}}
\def\Cov{\textsf{Cov}}
\def\Imm{\operatorname{Imm}}
\def\TL{\operatorname{TL}}
\def\startT{\textsf{start}_T}
\def\endT{\textsf{end}_T}
\definecolor{goodgreen}{rgb}{0.01, 0.75, 0.24}
\theoremstyle{plain}
\newtheorem{thm}{Theorem}[section]
\newtheorem{prop}[thm]{Proposition}
\newtheorem{cor}[thm]{Corollary}
\newtheorem{lemma}[thm]{Lemma}
\theoremstyle{definition}
\newtheorem{definition}[thm]{Definition}
\newtheorem{example}[thm]{Example}
\theoremstyle{remark}
\newtheorem{remark}[thm]{Remark}
\newcommand{\ZZ}{\mathbb{Z}}
\newcommand{\CC}{\mathbb{C}}
\newcommand{\eps}{\varepsilon}
\title{\vspace{-1em} 
Temperley–Lieb Crystals}
\author{\vspace{-2em} \\
Son Nguyen, Pavlo Pylyavskyy}
\date{\vspace{-3em}}
\begin{document}
\ytableausetup{centertableaux}

\maketitle

    \begin{abstract}
    Elements of Lusztig's dual canonical bases are Schur-positive when evaluated on (generalized) Jacobi-Trudi matrices. This deep property was proved by Rhoades and Skandera, relying on a result of Haiman, and ultimately on the (proof of) Kazhdan-Lusztig conjecture. For a particularly tractable part of the dual canonical basis - called Temperley-Lieb immanants - we give a generalization of Littlewood-Richardson rule: we provide a combinatorial interpretation for the coefficient of a particular Schur function in the evaluation of a particular Temperley-Lieb immanant on a particular Jacobi-Trudi matrix. For this we introduce shuffle tableaux, and apply Stembridge's axioms to show that certain graphs on shuffle tableaux are type $A$ Kashiwara crystals.  
    \end{abstract}

    \begin{figure}[h!]
        \centering
        \includegraphics[scale = 0.65]{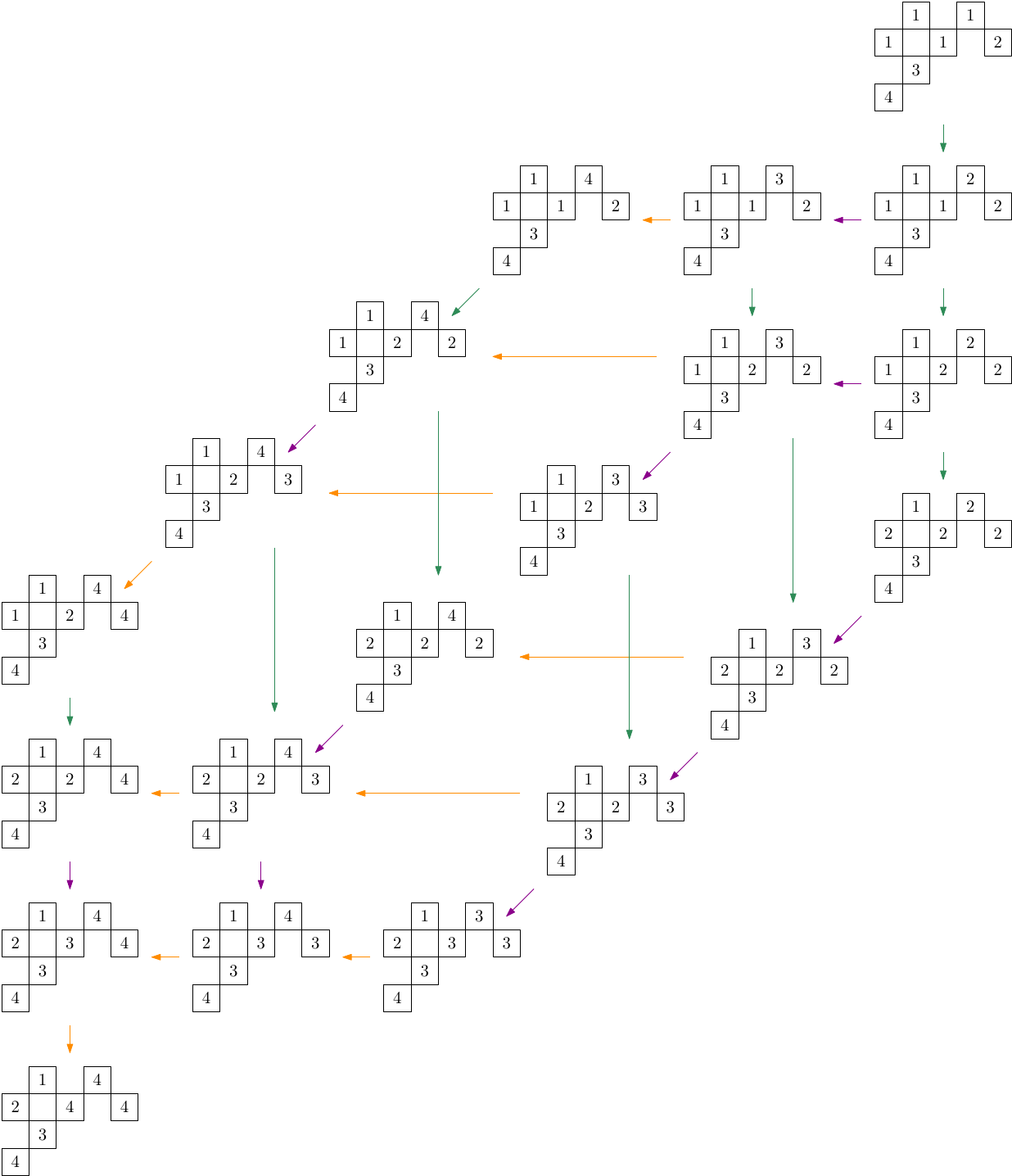}
    \end{figure}

\newpage

\tableofcontents


\section{Introduction}\label{sec:intro}

In his groundbreaking 1990 paper \cite{lusztig1990canonical} Lusztig has introduced {\it {canonical bases}} of quantum groups. One can study canonical bases in the dual setting of quantized coordinate rings, where one can also specialize $q=1$ to obtain the {\it {dual canonical bases}} of coordinate rings of algebraic groups. Study of canonical and dual canonical bases inspired several major developments in mathematics, such as Kashiwara crystal bases \cite{kashiwara1991crystal, kashiwara1993global} and Fomin-Zelevinsky cluster algebras \cite{fomin2002cluster, berenstein1996parametrizations}. 

In type $A$, elements of dual canonical bases are certain polynomials in the matrix algebra ${\mathbb C}[x_{ij}]$, $1 \leq i,j \leq n$. For example, the determinant of the whole matrix and of any of its submatrices is an element of dual canonical basis. Explicit formulas for dual canonical bases in type $A$ were given by Du \cite{du1992canonical, du1995canonical} and by Rhoades-Skandera \cite{rhoades2006kazhdan, skandera2008dual}, where the coefficients are given as evaluations of Kazhdan-Lusztig polynomials. 

Soon after introducing canonical bases, Lusztig has observed \cite{lusztig1994total} that they have a deep and perhaps surprising relation to an old notion of total positivity. A matrix is  totally positive if all of its minors have non-negative determinants. We refer the reader to the survey \cite{fomin1999total} for the history of total positivity. One way to describe the connection is to say that elements of dual canonical bases evaluate to positive numbers on totally positive matrices. This allowed Lusztig to extend the notion of total positivity to any algebraic group and associated flag varieties \cite{lusztig1994total, lusztig1997total}.

A less known, but no less deep positivity property of dual canonical bases in type $A$ was noticed and proved by Haiman \cite{haiman1993hecke} and Rhoades-Skandera \cite{rhoades2006kazhdan}. One of the fundamental results in the theory of symmetric functions is Jacobi-Trudi formula for Schur functions, representing them as determinants of matrices filled with complete homogenous symmetric functions, see \cite{macdonald1998symmetric, stanley2023enumerative} for a standard exposition. Any determinant of a (generalized) Jacobi-Trudi matrix is a skew Schur function, and as such is positive in the basis of Schur functions, thanks to the celebrated Littlewood-Richardson rule \cite{littlewood1934group}. As mentioned above, determinants are elements of the dual canonical bases. It turns out that {\it {all}} of their elements evaluate to Schur positive results on generalized Jacobi-Trudi matrices. In this form this statement was formulated and proved by Rhoades and Skandera in \cite[Proposition 3]{rhoades2006kazhdan}, relying on Haiman's \cite[Theorem 1.5]{haiman1993hecke}. The proof of the latter in turn relies on the (proofs of) Kazhdan-Lusztig conjecture by Beilinson-Bernstein \cite{BB} and Brylinski-Kashiwara \cite{brylinski1981kazhdan}.

Since the available descriptions of elements of dual canonical bases are in terms of evaluations of Kazhdan-Lusztig polynomials, it would appear that one cannot hope for a direct combinatorial proof of the above Schur positivity. It turns out however that there exists a part of dual canonical bases that admits a relatively elementary description. This part was introduced and studied by Rhoades and Skandera in \cite{rhoades2005temperley}, and is called {\it {Temperley-Lieb immanants}}. The reason for such an elementary description is a result of Fan and Green \cite{fan1997monomials} saying that the quotient map from the Hecke algebra to the Temperley-Lieb algebra acts in a particularly nice way on Kazhdan-Lusztig bases elements $C'_w$. This is not true for higher analogs of Temperley-Lieb algebra, see \cite{khovanov1999web}.

Given the existence of such an `elementary' part of the dual canonical bases, it is natural to ask if an elementary proof of Schur positivity exists in this case. In other words, can we prove without relying on the (proof of) Kazhdan-Lusztig conjecture that Temperley-Lieb immanants give Schur-positive results when evaluated on generalized Jacobi-Trudi matrices? This is also interesting because this statement has been used by Lam, Postnikov, and Pylyavskyy \cite{lam2007schur} to prove several deep Schur-positivity conjectures \cite{okounkov1997log, fomin2005eigenvalues, lascoux1997ribbon}, which seem to resist other approaches. For another applications see \cite{dobrovolska2007products}.

In this paper we give such elementary proof. In fact, we accomplish something  stronger: in Theorem \ref{thm:LR-rule} we provide a combinatorial interpretation for the coefficient of a particular Schur function in the evaluation of a particular Temperley-Lieb immanant on a particular generalized Jacobi-Trudi matrix. As explained in Remark \ref{remark:usualLR}, since the determinant is one of the Temperley-Lieb immanants, this gives a generalization of the Littlewood-Richardson rule. 

The main tool we use is a powerful technique introduced by Stembridge \cite{stembridge2003local}: by verifying certain local properties, it is possible to prove that a given graph is a type $A$ Kashiwara crystal. Since Schur functions are generating functions over vertices of such crystals, this allows one to prove Schur positivity results, if one can find appropriate objects and crystal operators on them. In our case the objects are called {\it {shuffle tableaux}}, and the crystal operators are defined using a variation of the usual bracketing procedure. The hardest and most technical part of the proof is to verify that Stembridge's axioms hold for the resulting graphs. 

The paper proceeds as follows. In Section \ref{sec:def} we review relevant definitions of Jacobi-Trudi matrices, Temperley-Lieb immanants, and the way both relate to planar networks. In Section \ref{sec:shuffle-tableau} we define shuffle tableaux, and associate to each of them its Temperley-Lieb type. In Section \ref{sec:crys} we define crystal operators on shuffle tableaux, thus obtaining Temperley-Lieb crystals. We prove that crystal operators do not change the Temperley-Lieb type of a shuffle tableaux. In Section \ref{sec:axiom} we prove that Stembridge's axioms hold for Temperley-Lieb crystals. Finally in Section \ref{sec:LR-rule} we derive a generalized Littlewood-Richardson rule for Temperley-Lieb immanants.

\section{Definitions}\label{sec:def}

\subsection{Generalized Jacobi-Trudi matrix and wiring}\label{subsec:JTmatrix}

    A \textit{planar network of order $n$} is an acyclic planar directed graph in $2n$ (not necessarily distinct) boundary vertices: $n$ sources $L_1,\ldots,L_n$ and $n$ sinks $R_1,\ldots,R_n$. Each edge $e$ has a weight $\omega(e)$, and we define the weight of a (multi)set of edges to be the product of the weight of the edges. Given a planar network $G$ of order $n$, its path matrix is the matrix $A = (a_{ij})$, where
    \[ a_{ij} = \sum_{P}\omega(P), \]
    summing over all paths from source $L_i$ to sink $R_j$.

    A lattice is a (directed) graph whose vertices are points $(i,j) \in \ZZ^2$ and edges are $(i+1,j)\rightarrow (i,j)$ and $(i,j+1)\rightarrow (i,j)$. In other words, the edges are oriented in the West and South directions. Each vertical edge $(i,j+1) \rightarrow (i,j)$ has weight $1$ while each horizontal edge $(i+1,j) \rightarrow (i,j)$ has weight $x_i$. The weight of a lattice path $\pi$, denoted $\omega(\pi)$, is the product of the weight of its edges. It is well-known that the sum of the weight of all lattice paths from $(\mu_i,\infty)$ to $(\nu_j,1)$ is the complete homogeneous symmetric polynomial $h_{\mu_i-\nu_j}$.

    Given partitions $\mu = (\mu_1,\ldots,\mu_n)$ and $\nu = (\nu_1,\ldots,\nu_n)$, the \textit{generalized Jacobi-Trudi matrix} $A_{\mu,\nu}$ is the finite matrix whose $i,j$ entry is $h_{\mu_i - \nu_j}$. By convention, we have $h_0 = 1$ and $h_m = 0$ for all $m < 0$.

    \begin{example}
        When $\mu = (9,6,6,4,3)$ and $\nu = (7,5,4,1,1)$, the generalized Jacobi-Trudi matrix is the following
        \[ \left(\begin{matrix}
            h_2 & h_4 & h_5 & h_8 & h_8 \\
            0 & h_1 & h_2 & h_5 & h_5 \\
            0 & h_1 & h_2 & h_5 & h_5 \\
            0 & 0 & 1 & h_3 & h_3 \\
            0 & 0 & 0 & h_2 & h_2 \\
        \end{matrix}\right) .\]
        One can check that $A_{\mu,\nu}$ is the path matrix for the network $G_{\mu,\nu}$ on lattice when the sources are $(\mu_1,\infty),\ldots,(\mu_n,\infty)$ and the sinks are $(\nu_1,1),\ldots,(\nu_n,1)$.
    \end{example}

    A \textit{(generalized) wiring} $H$ on a network $G$ is a family of $n$ paths $\pi_1,\ldots,\pi_n$ such that
    \begin{itemize}
        \item $\pi_i$ goes from $L_i$ to $R_i$ for all $i$, and
        \item no three paths share a vertex.
    \end{itemize}
    Figure \ref{fig:wiring} gives an example of a wiring of $G_{\mu,\nu}$ when $\mu = (9,6,6,4,3)$ and $\nu = (7,5,4,1,1)$. Note that the sources $L_2$ and $L_3$ are at the same vertex $(6,\infty)$, and the sinks $R_4$ and $R_5$ are at the same vertex $(1,1)$. By definition, wirings may only exist when no three sources nor three sinks coincide. Each edge in the lattice belongs to at most two paths. If an edge belongs to two paths, we say that edge is \textit{doubly covered}. The weight of a wiring is the product of the weight of all paths, i.e.
    \[ \omega(H) = \prod_{i = 0}^n\omega(\pi_i). \]

    \begin{figure}[h!]
        \centering
        \includegraphics[scale = 0.8]{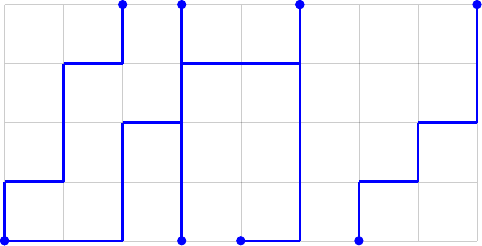}
        \caption{Generalized wiring}
        \label{fig:wiring}
    \end{figure}

    \begin{remark}
        It is common in the literature to orient the edges in the lattice in the North and East directions. In this paper, however, we reverse the directions to be consistent with the definition of generalized Jacobi-Trudi matrices.
    \end{remark}

    \begin{remark}
        The reason why we require that no three paths share a vertex is that when such three paths exist, Rhoades-Skandera showed in \cite{rhoades2005temperley} that the immanant, defined in Section \ref{subsec:TLimmanant}, is zero. Therefore, we do not need to consider that case. In particular, we will only consider generalized Jacobi-Trudi matrices from partitions $\mu$ and $\nu$ with no three equal parts.
    \end{remark}

\subsection{Temperley-Lieb immanant}\label{subsec:TLimmanant}

    The \textit{Hecke algebra} $H_n(q)$ is the $\CC$-algebra associated with the symmetric group $S_n$. $H_n(q)$ has the standard basis $\{ T_w~|~w\in S_n \}$, where $T_w = T_{s_{i_1}}\ldots T_{s_{i_k}}$ for a reduced decomposition $w = s_{i_1}\ldots s_{i_k}$. The generators of $H_n(q)$ have the following relations
    \begin{align*}
        T_{s_i}^2 &= (q-1)T_{s_i} + q, &\hspace{-5em}&\text{for $i = 1,\ldots,n-1$}, \\
        T_{s_i}T_{s_j}T_{s_i} &= T_{s_j}T_{s_i}T_{s_j}, &\hspace{-5em}&\text{if $|i-j| = 1$}, \\
        T_{s_i}T_{s_j} &= T_{s_j}T_{s_i},&\hspace{-5em}&\text{if $|i-j| > 1$}.
    \end{align*}
    The \textit{Temperley-Lieb algebra} $TL_n(\xi)$ is the $\CC[\xi]$-algebra generated by $t_1,\ldots,t_{n-1}$ subject to the relations
    \begin{align*}
        t_i^2 &= \xi t_i,&\hspace{-5em}&\text{for $i = 1,\ldots,n-1$}, \\
        t_it_jt_i &= t_i,&\hspace{-5em}&\text{if $|i-j| = 1$}, \\
        t_it_j &= t_jt_i,&\hspace{-5em}&\text{if $|i-j| > 1$}.
    \end{align*}
    A \textit{$321$-avoiding permutation} is a permutation in $S_n$ that has no reduced decomposition of the form $\cdots s_is_js_i\cdots$ where $|i-j| = 1$. Because of the second relation, the Temperley-Lieb algebra has a natural basis $\{t_w~|~\text{$w$ is a 321-avoiding permutation}\}$, where $t_w := t_{i_1}\ldots t_{i_k}$ for a reduced decomposition $w = s_{i_1}\ldots s_{i_k}$. The map
    \[ \theta~:~T_{s_i} \rightarrow t_i - 1 \]
    determines a homomorphism $\theta~:~H_n(1) \rightarrow TL_n(2)$. For any permutation $v\in S_n$ and any 321-avoiding permutation $w\in S_n$, let $f_w(v)$ be the coefficient of $t_w\in TL_n(2)$ in the basis expansion of $\theta(T_v) = (t_{i_1}-1)\ldots (t_{i_k}-1)$ for a reduced decomposition $v = s_{i_1}\ldots s_{i_k}$. In \cite{rhoades2005temperley}, Rhoades and Skandera defined the \textit{Temperley-Lieb immanant} of an $n\times n$ matrix $X = (x_{ij})$ by
    \[ \Imm_w^{\TL}(X) = \sum_{v\in S_n}f_w(v)x_{1,v(1)}\ldots x_{n,v(n)}. \]

    Products of $t_i$'s in $TL_n(\xi)$ can be computed graphically using \textit{Temperley-Lieb diagrams}. The generators $t_1,t_2,t_3,\ldots$ are represented by
    \[ \resizebox{.05\textwidth}{!}{
        \begin{tikzpicture}
            \uncross{1}{1}{3}
            \draw[thick] (1.5,-5) to (0.5, -5);
            \draw (1,-4) node[anchor=center, scale = 1.5] {$\vdots$};
        \end{tikzpicture}
    },\quad 
    \resizebox{.05\textwidth}{!}{
        \begin{tikzpicture}
            \uncross{1}{2}{3}
            \draw[thick] (1.5,-5) to (0.5, -5);
            \draw (1,-4) node[anchor=center, scale = 1.5] {$\vdots$};
        \end{tikzpicture}
    },\quad 
    \resizebox{.05\textwidth}{!}{
        \begin{tikzpicture}
            \uncross{1}{3}{3}
            \draw[thick] (1.5,-5) to (0.5, -5);
            \draw (1,-4) node[anchor=center, scale = 1.5] {$\vdots$};
        \end{tikzpicture}
    },\ldots\]
    and multiplication is represented by concatenation.
    
    \begin{example}
        The Temperley-Lieb diagram for $t_1t_3t_2t_1t_3\in TL_4$ is
        \[ \resizebox{!}{.15\textwidth}{
            \begin{tikzpicture}
                \draw (0,0) node[anchor=center] {$L_1$};
                \draw (0,-1) node[anchor=center] {$L_2$};
                \draw (0,-2) node[anchor=center] {$L_3$};
                \draw (0,-3) node[anchor=center] {$L_4$};
                \draw (6,0) node[anchor=center] {$R_1$};
                \draw (6,-1) node[anchor=center] {$R_2$};
                \draw (6,-2) node[anchor=center] {$R_3$};
                \draw (6,-3) node[anchor=center] {$R_4$};
                \filldraw[black] (0.5,0) circle (2pt);
                \filldraw[black] (0.5,-1) circle (2pt);
                \filldraw[black] (0.5,-2) circle (2pt);
                \filldraw[black] (0.5,-3) circle (2pt);
                \filldraw[black] (5.5,0) circle (2pt);
                \filldraw[black] (5.5,-1) circle (2pt);
                \filldraw[black] (5.5,-2) circle (2pt);
                \filldraw[black] (5.5,-3) circle (2pt);
                \uncross{1}{1}{3}
                \uncross{2}{3}{3}
                \uncross{3}{2}{3}
                \uncross{4}{1}{3}
                \uncross{5}{3}{3}
            \end{tikzpicture}
        } \]
    \end{example}

    One can observe that each diagram consists of a noncrossing matching of the boundary vertices and internal loops. If two Temperley-Lieb diagrams have the same matching and the same number of internal loops, then the corresponding products are equal to each other. If the diagram of $a$ is obtained from the diagram of $b$ by removing $k$ internal loops, then $b = \xi^ka$ in $TL_n$. Thus, we have a natural bijection between basis elements of $TL_n$ and noncrossing matchings of $2n$ vertices.

    Given a generalized wiring, one can also obtain a Temperley-Lieb diagram as follows.
    \begin{enumerate}
        \item Contract any doubly covered subpath to a single vertex.
        \item If two sources coincide at the same vertex $v$, create two new sources, each having one edge going to $v$. If two sinks coincide at the same vertex $v$, create two new sinks, each having one edge coming from $v$.
        \item For each vertex $v$ of indegree two and outdegree two, create vertex $v'$ with indegree two and vertex $v''$ with outdegree two.
    \end{enumerate}

    \begin{example}
        One can check that the wiring in Figure \ref{fig:wiring} corresponds to the following diagram.
        \[ \resizebox{!}{0.2\textwidth}{
            \begin{tikzpicture}
                \draw (0,0) node[anchor=center] {$L_1$};
                \draw (0,-1) node[anchor=center] {$L_2$};
                \draw (0,-2) node[anchor=center] {$L_3$};
                \draw (0,-3) node[anchor=center] {$L_4$};
                \draw (0,-4) node[anchor=center] {$L_5$};
                \draw (4,0) node[anchor=center] {$R_1$};
                \draw (4,-1) node[anchor=center] {$R_2$};
                \draw (4,-2) node[anchor=center] {$R_3$};
                \draw (4,-3) node[anchor=center] {$R_4$};
                \draw (4,-4) node[anchor=center] {$R_5$};
                \filldraw[black] (0.5,0) circle (2pt);
                \filldraw[black] (0.5,-1) circle (2pt);
                \filldraw[black] (0.5,-2) circle (2pt);
                \filldraw[black] (0.5,-3) circle (2pt);
                \filldraw[black] (0.5,-4) circle (2pt);
                \filldraw[black] (3.5,0) circle (2pt);
                \filldraw[black] (3.5,-1) circle (2pt);
                \filldraw[black] (3.5,-2) circle (2pt);
                \filldraw[black] (3.5,-3) circle (2pt);
                \filldraw[black] (3.5,-4) circle (2pt);
                \uncross{1}{2}{4}
                \uncross{2}{3}{4}
                \uncross{3}{4}{4}
            \end{tikzpicture}
        } \]
    \end{example}
    Given a wiring $H$, we follow the notations in \cite{rhoades2005temperley} and define $\psi(H)$ to be the noncrossing matching of $H$ and $\epsilon(H)$ to be the number of loops in $H$. We say that $\psi(H)$ is the Temperley-Lieb type of $H$. Rhoades and Skandera proved the following theorem.

    \begin{thm}[{\cite[Theorem 3.1]{rhoades2005temperley}}]\label{thm:imm-formula}
        Let $G$ be a planar network of order $n$ and $A$ be its path matrix. For any basis element $\tau$ of $TL_n(2)$,
        \[ \Imm^{\TL}_\tau(A) = \sum_{H}2^{\epsilon(H)}\omega(H), \]
        where the sum is over all wiring $H$ of $G$ such that $\psi(H) = \tau$.
    \end{thm}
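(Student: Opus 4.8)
The plan is to unwind both sides into combinatorial sums over paths and match them. By definition $\Imm^{\TL}_\tau(A)=\sum_{v\in S_n}f_\tau(v)\,a_{1,v(1)}\cdots a_{n,v(n)}$, and expanding each entry via $a_{i,v(i)}=\sum_{P_i\colon L_i\to R_{v(i)}}\omega(P_i)$ rewrites this as a signed sum over pairs $(v,(P_1,\dots,P_n))$ consisting of a permutation and a path family realizing it. It is cleanest to handle all $\tau$ at once, i.e.\ to prove in $\TL_n(2)\otimes_{\CC}\RR$ (with $\RR$ the ring of edge-weight polynomials) the single identity
\[
  \sum_{v\in S_n}\Bigl(\prod_{i=1}^{n}a_{i,v(i)}\Bigr)\theta(T_v)
  \;=\;\sum_{H}2^{\epsilon(H)}\,\omega(H)\,t_{\psi(H)},
\]
the right-hand sum over all wirings $H$ of $G$; since $\theta(T_v)=\sum_\tau f_\tau(v)\,t_\tau$, extracting the coefficient of $t_\tau$ is exactly the claimed formula.

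The engine is the Temperley--Lieb skein relation carried by $\theta$: since $\theta(T_{s_i})=t_i-1$, a crossing of two adjacent strands equals the cap--cup $t_i$ minus the identity, and pinching off a closed loop multiplies by $\xi=2$ (because $t_i^2=2t_i$). Fix a total order on the vertices of $G$. Given a pair $(v,(P_i))$ whose paths are not pairwise vertex-disjoint, let $x$ be the first shared vertex; after contracting any doubly-covered subpath emanating from $x$, exactly two strands $P_a,P_b$ meet there (triple points being forbidden), and planarity together with the convex position of the sources and sinks, plus the absence of intersections before $x$, forces $P_a,P_b$ to be spatially adjacent, so that swapping their tails changes $v$ by an adjacent transposition $s_i$ and preserves the total weight. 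Pairing $(v,(P_i))$ with its tail-swap and using $\theta(T_{vs_i})=\theta(T_v)(t_i-1)$, the two contributions combine so that the crossing at $x$ gets replaced by the cap--cup $t_i$, exactly the smoothing used in the construction of $\psi$; each closed loop pinched off in the process contributes a factor $2$.

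Iterating this local move drives every pair down to a fully smoothed picture, i.e.\ to a term $2^{\epsilon(H)}\omega(H)\,t_{\psi(H)}$ for a unique wiring $H$; the base case is the vertex-disjoint families, which contribute the noncrossing matching $L_i\leftrightarrow R_i$ with no loops. Conversely every wiring is reached, and verifying that the two enumerations agree with the stated multiplicities proves the displayed identity. I expect the real difficulty to be exactly this local analysis at a shared vertex: one must reduce to two strands (using that triple points are disallowed and that doubly-covered subpaths contract), enumerate the few possible local routings and pair them correctly---signs included---against the two terms of $t_i-1$, check that the swap stays inside the set of valid wirings and disturbs neither the already-processed part of the picture nor the ``first intersection'' bookkeeping, and confirm that the loop count emerges with base exactly $2$, which is precisely where the specialization $\xi=2$ (equivalently, that $\theta$ lands in $\TL_n(2)$) is used. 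The remaining steps are routine.
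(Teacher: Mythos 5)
This theorem is not proved in the paper at all: it is quoted verbatim from Rhoades--Skandera \cite[Theorem 3.1]{rhoades2005temperley}, so there is no in-paper proof to compare against. Your sketch does follow the spirit of the original argument (a Lindstr\"om--Gessel--Viennot-type resolution of crossings driven by the skein relation $\theta(T_{s_i})=t_i-1$, with $(t_i-1)^2=1$ supplying the cancellation and $t_i^2=2t_i$ supplying the factor $2^{\epsilon(H)}$), and your reformulation as the single identity $\sum_v\bigl(\prod_i a_{i,v(i)}\bigr)\theta(T_v)=\sum_H 2^{\epsilon(H)}\omega(H)\,t_{\psi(H)}$ is the right thing to prove.

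There is, however, a genuine gap. When you expand $\prod_i a_{i,v(i)}$ over all path families, you obtain families in which three or more paths pass through a common vertex; these are excluded from the set of wirings $H$ on the right-hand side, yet they appear on the left with nonzero weight, and your parenthetical ``triple points being forbidden'' silently assumes they are not there. Your tail-swap pairing does not dispose of them: at a triple point there are three strands and no canonical adjacent pair to swap, so the two-strand skein analysis does not apply. Showing that the total contribution of such families vanishes is a separate, nontrivial step of the Rhoades--Skandera argument (it is exactly the fact recorded in the paper's remark that the immanant of a network forcing three coincident paths is zero), and it must be supplied before the rest of your induction can run. A second, lesser issue: after one pairing the coefficient becomes $\theta(T_v)\,t_i$, which is no longer of the form $\theta(T_w)$, so ``iterating the local move'' is not literally available; you need either a strengthened inductive statement about partially resolved pictures, or (cleaner, and closer to the original) to group path families by their underlying edge multiset and identify $\sum_P\theta(T_{v(P)})$ over the $2^k$ local routings with the fully smoothed product $\prod_j t_{i_j}=2^{\epsilon}t_{\psi}$. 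Finally, the assertion that the first shared vertex always involves two \emph{adjacent} strands (so that the swap is by a simple transposition $s_i$) is exactly where planarity and the boundary ordering must be used, and it deserves a proof rather than the phrase ``forces.''
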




\subsection{Colored paths}\label{subsec:paths}

    For two subsets $I,J\in [n]$ such that $|I| = |J|$, we color the boundary vertices as follows. Color $L_i$ black if $i\in I$ and white otherwise. Color $R_j$ white if $j\in J$ and black otherwise. A Temperley-Lieb basis element is \textit{compatible} with $(I,J)$ if each strand in the diagram has one black endpoint and one white endpoint. 

    \begin{definition}\label{def:colored-cover}
        Given a wiring $H$ and $\tau = \psi(H)$. Let $I, J \subseteq [n]$ with $|I| = |J|$ be a coloring that is compatible with $\tau$. A colored cover of $H$ by $(I,J)$ is a coloring of the edges in $H$ by two colors red and blue that forms two families of red and blue paths satisfying:
        \begin{enumerate}
            \item every edge in $H$ is colored; in particular, the doubly covered edges are colored by two colors;
            \item the red paths are noncrossing paths from $\{L_i~|~i\in I\}$ to $\{R_j~|~j\in J\}$, and the blue paths are noncrossing paths from $\{L_i~|~i\in \Bar{I}\}$ to $\{R_j~|~j\in \Bar{J}\}$.
        \end{enumerate}
        If $x$ is a colored cover of $H$, then we abuse notation and define $\omega(x) = \omega(H)$, $\psi(x) = \psi(x)$, and $\epsilon(x) = \epsilon(H)$. We also label the colored paths $p_1,p_2,\ldots,p_n$ where $p_i$ starts from $L_i$.
    \end{definition}

    \begin{example}
        Figure \ref{fig:colored-cover} shows two colored covers of the same wiring by the coloring $I = \{1,4\}, J = \{1,2\}$.

        \begin{figure}[h!]
            \centering
            \includegraphics[scale = 0.8]{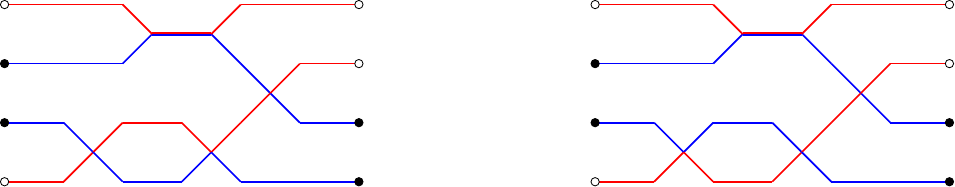}
            \caption{Two colored covers of a wiring}
            \label{fig:colored-cover}
        \end{figure}
    \end{example}

    \begin{prop}[{\cite[Proposition 2.1]{skandera2004inequalities}}]\label{prop:colored_path}
        Given a wiring $H$ and $\tau = \psi(H)$. Let $I, J \subseteq [n]$ with $|I| = |J|$ be a coloring that is compatible with $\tau$. Let $\Cov(H)_{I,J}$ be the set of colored covers of $H$ by $(I,J)$. Then,
        \[ |\Cov(H)_{I,J}| = 2^{\epsilon(H)}. \]
        On the other hand, if $(I,J)$ is not compatible with $\tau$, then $|\Cov(H)_{I,J}| = 0$.
    \end{prop}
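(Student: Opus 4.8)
The plan is to reformulate a colored cover as a two-coloring of only the \emph{singly covered} edges of $H$, and then to show that this coloring decouples completely over the connected components of the Temperley--Lieb diagram of $H$.

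First I would note that a colored cover carries no information on the doubly covered edges beyond the requirement that each be colored by both colors: every doubly covered edge is covered once by a red path and once by a blue path, and there is no further choice. So a colored cover is exactly a two-coloring of the edges of the Temperley--Lieb diagram $\tau\sqcup(\epsilon(H)\text{ loops})$, subject to the condition that the red edges together with \emph{all} doubly covered edges form a noncrossing family of paths from $\{L_i:i\in I\}$ to $\{R_j:j\in J\}$, and symmetrically for blue. Two points must be isolated here: (i) a red path and a blue path may overlap, precisely along doubly covered edges, and may cross one another --- only red--red and blue--blue crossings are forbidden, so in particular the red (resp. blue) paths are pairwise vertex-disjoint; (ii) since no red path starts or ends at an interior vertex of $H$, at each interior vertex there is at most one red in-edge and at most one red out-edge, and likewise for blue.

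The heart of the matter is a local analysis at the junction vertices of $H$ --- the merge/split vertices bounding a maximal doubly covered subpath, and the crossing vertices --- which become the caps and cups of the diagram. At each cap and each cup, point (ii) for red together with its blue analogue forces the two incident diagram-edges to receive opposite colors; call this the \emph{toggle rule}. I would then verify that this is the \emph{only} local constraint: at a junction the doubly covered subpath (respectively the crossing) absorbs any coupling between the cap side and the cup side, so the colorings of the two diagram-components meeting there are not linked. In particular, on a strand the coloring is constrained only by the toggle rule along the strand and by the colors prescribed at its two boundary endpoints by $(I,J)$, and on a loop only by the toggle rule around the loop.

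Finally I would count. Walking along a strand, the boundary color at one endpoint propagates through the toggle rule and fixes the coloring uniquely; the color then forced at the other endpoint agrees with the one prescribed by $(I,J)$ exactly when the strand is bichromatic. This is a parity check: a strand from an $L$-vertex to an $R$-vertex crosses an even number of caps-and-cups (its traversal begins and ends moving along the edge orientations, since it runs from a source to a sink), while on an $L$--$L$ or $R$--$R$ strand that number is odd; matching this against the endpoint colors is precisely compatibility. Hence a strand contributes one valid coloring if bichromatic and none otherwise. Around a loop the number of caps-and-cups is even, because maximal ``with the orientation'' and ``against the orientation'' stretches alternate and close up, so the toggle rule is consistent and leaves a single free binary choice. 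Multiplying over components gives $|\Cov(H)_{I,J}|=2^{\epsilon(H)}$ when every strand of $\tau$ is bichromatic --- i.e. when $(I,J)$ is compatible with $\tau$ --- and $0$ otherwise. The main obstacle is the local analysis of the previous paragraph: one must go through the several shapes of junction vertex on the lattice, confirm that the toggle rule is forced and is the sole local constraint (so that distinct components truly decouple), and check the converse direction that any coloring obeying the toggle rule and the boundary prescription genuinely assembles into noncrossing red and blue path families.
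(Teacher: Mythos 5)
Your proposal is correct and follows essentially the same route as the paper's own (sketched) proof: propagate colors from the boundary vertices along the components of the Temperley--Lieb diagram, with doubly covered edges receiving both colors, so that strands are forced (and consistent exactly when bichromatic, i.e.\ when $(I,J)$ is compatible with $\tau$) while each internal loop contributes an independent binary choice of orientation, giving $2^{\epsilon(H)}$. Your write-up merely makes explicit the local toggle/parity analysis that the paper leaves implicit.
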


    \begin{proof}[Proof sketch]
        We can construct each colored cover as follows. Start from $L_i$ for $i\in I$ (resp. $i\notin I$), color the edges red (resp. blue) when traversing forward and blue (resp. red) when traversing backward. Similarly, start from $R_j$ for $j\in J$ (resp. $j\notin J$), color the edges red (resp. blue) when traversing backward and blue (resp. red) when traversing forward. Color the doubly covered edges both red and blue. Finally, for each internal loop, pick an orientation and color the edges red (resp. blue) when traversing forward (resp. backward). There are two possible orientations for each loop, so there are $2^{\epsilon(H)}$ colored covers in total.
    \end{proof}

    \begin{cor}\label{cor:colored-cover}
        For any basis element $\tau$ of $TL_n(2)$ and any coloring $(I,J)$ that is compatible with $\tau$, we have
        \[ \Imm^{\TL}_\tau(A) = \sum_{\psi(H) = \tau} \sum_{x\in\Cov(H)_{I,J}} \omega(x). \]
    \end{cor}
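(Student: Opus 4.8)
The plan is to deduce this corollary by chaining together the two results that immediately precede it: Theorem \ref{thm:imm-formula} and Proposition \ref{prop:colored_path}. First I would invoke Theorem \ref{thm:imm-formula} to write
\[ \Imm^{\TL}_\tau(A) = \sum_{H : \psi(H) = \tau} 2^{\epsilon(H)}\omega(H), \]
the sum being over all wirings $H$ of the network $G$ whose Temperley-Lieb type is the prescribed basis element $\tau$.

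Next, I would fix one such wiring $H$ and observe that, since $\psi(H) = \tau$ and the coloring $(I,J)$ is assumed compatible with $\tau$, the coloring $(I,J)$ is compatible with $\psi(H)$; this is exactly the hypothesis needed to apply Proposition \ref{prop:colored_path}, which gives $|\Cov(H)_{I,J}| = 2^{\epsilon(H)}$. Then I would use the convention from Definition \ref{def:colored-cover} that every colored cover $x$ of $H$ satisfies $\omega(x) = \omega(H)$, so that
\[ 2^{\epsilon(H)}\omega(H) = |\Cov(H)_{I,J}| \cdot \omega(H) = \sum_{x \in \Cov(H)_{I,J}} \omega(x). \]
Substituting this into the sum from the first step and keeping the outer sum over wirings $H$ with $\psi(H) = \tau$ yields the claimed identity.

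There is essentially no obstacle here; the statement is a bookkeeping consequence of the earlier results. The only point worth a sentence of care is the interplay between "$(I,J)$ compatible with $\tau$" and "$(I,J)$ compatible with $\psi(H)$": these are the same condition precisely because the double sum ranges only over wirings with $\psi(H) = \tau$, so Proposition \ref{prop:colored_path} applies uniformly to every term, with no contribution lost or double-counted. (In particular one never meets the second alternative of Proposition \ref{prop:colored_path}, where $\Cov(H)_{I,J} = \varnothing$.)
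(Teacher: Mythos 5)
Your proof is correct and is exactly the argument the paper intends: the corollary is stated without proof precisely because it follows immediately by combining Theorem \ref{thm:imm-formula} with Proposition \ref{prop:colored_path} and the convention $\omega(x)=\omega(H)$ from Definition \ref{def:colored-cover}, which is what you do. Your remark about compatibility with $\tau$ versus $\psi(H)$ being the same condition under the constraint $\psi(H)=\tau$ is a fair point of care, correctly handled.
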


    When $I = J$, we have a special type of colored cover.

    \begin{definition}\label{def:canonical-cc}
        When $I = J$, every wiring $H$ such that $\tau = \psi(H)$ is compatible with $(I,J)$ has a \textit{canonical colored cover}. This colored cover is obtained by treating each $\resizebox{!}{0.8em}{\begin{tikzpicture}
            \cross{1}{1}{1}
        \end{tikzpicture}}$ as $\resizebox{!}{0.8em}{\begin{tikzpicture}
            \draw[thick] [bend right = 90, looseness=1.25] (-0.5, 0) to (0.5, 0);
            \draw[thick] [bend right = 90, looseness=1.25] (0.5, -1) to (-0.5, -1);
        \end{tikzpicture}}$.
    \end{definition}

    \begin{remark}\label{rem:loop-and-order}
        An important property of the canonical colored cover is that if a vertical line is drawn through the wiring, then it intersects paths $p_1,p_2,\ldots,p_n$ in that order. From the proof of Proposition \ref{prop:colored_path}, every other colored cover is obtained from the canonical colored cover by swapping the orientation of some loops. Thus, if for some colored cover of a wiring $H$ by $I = J$, a line drawn through the wiring intersects path $p_i$ before $p_j$ where $i > j$, then the segments of $p_i$ and $p_j$ that intersect the line have to belong to some loops.
    \end{remark}

\subsection{Products of minors}\label{subsec:prod-minor}

    Denote by $\Theta(I,J)$ the set of Temperley-Lieb basis elements that are compatible with $(I,J)$. Let $\Bar{I} := [n] \backslash I$, and let $\Delta_{I,J}(A)$ denote the minor of $A$ in the row set $I$ and column set $J$. We have the following identity.
    \begin{thm}[{\cite[Proposition 4.3]{rhoades2005temperley}}, cf. \cite{skandera2004inequalities}]\label{thm:prod-minor}
        For two subsets $I,J\in [n]$ such that $|I| = |J|$, we have
        \[ \Delta_{I,J}(A)\cdot\Delta_{\Bar{I},\Bar{J}}(A) = \sum_{\tau\in\Theta(I,J)}\Imm^{\TL}_\tau(A). \]
    \end{thm}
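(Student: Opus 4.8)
The plan is to rewrite both sides of the identity as weighted sums over combinatorial objects on the network $G$ and then match them by a weight-preserving bijection. For the right-hand side, Corollary~\ref{cor:colored-cover} gives, for each $\tau\in\Theta(I,J)$, that $\Imm^{\TL}_\tau(A)=\sum_{\psi(H)=\tau}\sum_{x\in\Cov(H)_{I,J}}\omega(x)$, and since $\Cov(H)_{I,J}=\emptyset$ unless $\psi(H)\in\Theta(I,J)$ (Proposition~\ref{prop:colored_path}), summing over $\tau\in\Theta(I,J)$ collapses to
\[
\sum_{\tau\in\Theta(I,J)}\Imm^{\TL}_\tau(A)\;=\;\sum_{H}\ \sum_{x\in\Cov(H)_{I,J}}\omega(x),
\]
the outer sum ranging over all wirings $H$ of $G$; equivalently, the right-hand side equals $\sum_x\omega(x)$ over the disjoint union $\bigsqcup_H\Cov(H)_{I,J}$.

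For the left-hand side, the Lindström--Gessel--Viennot lemma, together with the planarity of $G$ and the fact that its sources and sinks are listed in boundary order (so that the only nonvanishing term in the expansion of a minor matches the $k$-th chosen source to the $k$-th chosen sink), identifies $\Delta_{I,J}(A)$ with $\sum_{\mathcal R}\omega(\mathcal R)$, the sum over families $\mathcal R$ of pairwise non-crossing directed paths of $G$ from $\{L_i:i\in I\}$ to $\{R_j:j\in J\}$, and likewise $\Delta_{\Bar I,\Bar J}(A)=\sum_{\mathcal B}\omega(\mathcal B)$ over such families from $\{L_i:i\in\Bar I\}$ to $\{R_j:j\in\Bar J\}$. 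Hence $\Delta_{I,J}(A)\,\Delta_{\Bar I,\Bar J}(A)=\sum_{(\mathcal R,\mathcal B)}\omega(\mathcal R)\,\omega(\mathcal B)$, the sum over all pairs of such families, with no constraint relating $\mathcal R$ and $\mathcal B$.

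It therefore suffices to build a weight-preserving bijection between $\bigsqcup_H\Cov(H)_{I,J}$ and the set of pairs $(\mathcal R,\mathcal B)$ just described. In the forward direction, a colored cover $x$ of a wiring $H$ is sent to the pair $(\mathcal R_x,\mathcal B_x)$ consisting of its red and its blue path family, which by condition~(2) of Definition~\ref{def:colored-cover} are exactly families of the required type. Weight preservation is the easy half: a colored cover colors each of the $m_H(e)\in\{1,2\}$ copies of every edge $e$ of $H$ exactly once, a doubly covered edge receiving one red and one blue copy, so $e$ carries at most one red copy and at most one blue copy and $m_H(e)$ equals the number of red copies plus the number of blue copies; therefore $\omega(x)=\omega(H)=\prod_e\omega(e)^{m_H(e)}=\omega(\mathcal R_x)\,\omega(\mathcal B_x)$.

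The substance of the proof, and the step I expect to be the main obstacle, is the inverse map — equivalently, the injectivity and surjectivity of $x\mapsto(\mathcal R_x,\mathcal B_x)$. Given a pair $(\mathcal R,\mathcal B)$, overlaying the two families yields an edge multiset $M$ on $G$, and one must show that $M$ underlies a \emph{unique} wiring $H$ and that, relative to this $H$, the induced red/blue coloring of $M$ is a legitimate colored cover. Uniqueness is the delicate point: at each vertex where a red strand and a blue strand meet there is an a priori choice of how the two wiring paths $\pi_i\colon L_i\to R_i$ pass through, and one has to show that the constraint that each $\pi_i$ run from $L_i$ to $R_i$, together with the accounting of the internal loops — exactly the choices counted by the factor $2^{\epsilon(H)}$ of Proposition~\ref{prop:colored_path} — pins down a single wiring. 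Concretely, the internal loops of the reconstructed $H$ are the bigons where $\mathcal R$ and $\mathcal B$ run antiparallel, the two orientations of such a bigon corresponding to the two colored covers that agree away from it; tracing the red and blue strands through $M$ then recovers the wiring paths $\pi_i$, and hence $\psi(H)$, and shows that the coloring lies in $\Cov(H)_{I,J}$. Additional care is needed at boundary vertices where two sources or two sinks coincide and along maximal doubly covered subpaths, where the contractions defining $\psi(H)$ interact with the coloring. Granting this bijection, the displays above combine to give Theorem~\ref{thm:prod-minor}.
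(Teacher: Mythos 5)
The paper does not actually prove this statement: it is imported wholesale from Rhoades--Skandera (Proposition 4.3 of the cited paper, going back to Skandera's work on products of complementary minors), so there is no in-paper argument to compare against. On its own terms, your two reductions are fine and match the strategy of the cited sources: collapsing the right-hand side to $\sum_{H}\sum_{x\in\Cov(H)_{I,J}}\omega(x)$ via Corollary~\ref{cor:colored-cover} and the vanishing clause of Proposition~\ref{prop:colored_path} involves no circularity, and Lindstr\"om--Gessel--Viennot plus planarity correctly identifies the left-hand side with the sum over pairs $(\mathcal R,\mathcal B)$ of noncrossing families, with the weight bookkeeping $\omega(H)=\omega(\mathcal R)\omega(\mathcal B)$ handled correctly.

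The gap is the bijection itself, which you explicitly defer, and your framing of what must be proved points in the wrong direction. The delicate point is not \emph{uniqueness} of the wiring underlying the overlay multiset $M$. If a wiring is read literally as an indexed family of paths $\pi_i\colon L_i\to R_i$, uniqueness is simply \emph{false}: two monotone lattice paths $\pi_1\colon L_1\to R_1$ and $\pi_2\colon L_2\to R_2$ can cross transversally at exactly two vertices, and swapping their middle segments produces a second, distinct path family with the same edge multiset, the same $\psi$, and the same $\epsilon$; the disjoint union $\bigsqcup_H\Cov(H)_{I,J}$ over path families would then count every coloring of $M$ twice and your map $x\mapsto(\mathcal R_x,\mathcal B_x)$ would fail to be injective. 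The identity only survives because $\omega$, $\psi$, $\epsilon$, and $\Cov(\cdot)_{I,J}$ all depend on $H$ only through its edge multiset, so wirings must be identified with their underlying multisets; once that identification is made, injectivity is automatic (a colored cover \emph{is} the pair of its red and blue subfamilies), and the entire content of the theorem is \emph{surjectivity}: one must prove that the overlay of an arbitrary noncrossing family from $\{L_i\colon i\in I\}$ to $\{R_j\colon j\in J\}$ with an arbitrary noncrossing family on the complementary boundary vertices admits a decomposition into paths $\pi_i\colon L_i\to R_i$ (the ``no three paths share a vertex'' condition is immediate, since at most one red and one blue path pass through any vertex, but the existence of the identity-permutation decomposition is a genuine planarity argument, e.g.\ an uncrossing induction on the transversal intersections of red with blue). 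Until that existence lemma is written down, what you have is a correct pair of reductions joined by an unproved claim, and the claim as you have phrased it is not the one you need.
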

    To define our crystal operators, we will make a specific choice of $I,J$: 
    \[ I = J = \nodd =  \{i \in [n]~|~i~\text{is odd}\}. \]
    The reason for this choice of $I,J$ is the following observation.
    \begin{prop}\label{prop:compatible}
        Let $I = J = \nodd$, every type $\tau$ is compatible with $(I,J)$.
    \end{prop}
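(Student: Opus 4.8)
The plan is to reduce the claim to a standard fact about noncrossing matchings. First I would unwind the definition of compatibility for the specific choice $I=J=\nodd$: the coloring rule makes $L_i$ black exactly when $i$ is odd, and makes $R_j$ black exactly when $j$ is even (since $R_j$ is declared white precisely when $j\in J$). So the task is to show that, for this particular black/white pattern, every strand of an arbitrary noncrossing matching $\tau$ joins a black vertex to a white one.

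Next I would place the $2n$ boundary vertices on a circle in the cyclic order $L_1,L_2,\dots,L_n,R_n,R_{n-1},\dots,R_1$, so that $\tau$ becomes a noncrossing perfect matching by chords. The key observation --- and the one point that needs care --- is that for $I=J=\nodd$ this pattern is a \emph{proper} $2$-coloring of the $2n$-cycle: any two cyclically consecutive vertices receive opposite colors. Indeed, consecutive $L_i$'s (resp.\ consecutive $R_j$'s) differ in the parity of their index, hence in color; at the arc joining $L_n$ to $R_n$ the two rules read ``black iff $n$ odd'' and ``black iff $n$ even'', which disagree; and at the arc joining $R_1$ to $L_1$ one has $R_1$ white and $L_1$ black. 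Thus all $2n$ arcs of the cycle connect opposite colors.

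It then remains to prove: \emph{in any noncrossing perfect matching of $2n$ points on a circle carrying a proper $2$-coloring, every chord joins two points of opposite color.} I would prove this by induction on $n$, the case $n=1$ being immediate. For the inductive step, note that a noncrossing perfect matching always contains a chord $\{v,v'\}$ between two cyclically adjacent points (take the chord whose shorter side contains the fewest vertices; were that side nonempty it would contain an even number of matched points, hence a shorter such chord, a contradiction). By properness $v$ and $v'$ have opposite colors; deleting them leaves $2n-2$ points still carrying a proper $2$-coloring --- the two former neighbours of $v,v'$ become adjacent, and one checks they are oppositely colored --- and an induced noncrossing perfect matching, so the inductive hypothesis applies. (An alternative, non-inductive argument: a chord $\{a,b\}$ cuts the circle into two arcs, each containing an even number of interior vertices because the matching restricted to an arc is a perfect matching; walking from $a$ to $b$ along such an arc with $2k$ interior vertices flips the color an odd number of times, forcing $a$ and $b$ to differ.)

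Combining these: the coloring induced by $I=J=\nodd$ is proper on the cycle, so every strand of $\tau$ joins a black vertex to a white vertex, i.e.\ $\tau$ is compatible with $(I,J)$. The only genuinely delicate step is the verification that the coloring is proper across the two ``junction'' arcs $L_n$--$R_n$ and $R_1$--$L_1$; the noncrossing-matching lemma is classical, and the remainder is bookkeeping.
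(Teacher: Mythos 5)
Your proof is correct and rests on the same two observations as the paper's: for $I=J=\nodd$ the boundary coloring alternates around the disk, and in a noncrossing matching each strand has an even number of vertices on either side, forcing its endpoints to differ in color. The paper's entire proof is the parity argument you relegate to a parenthetical "alternative"; your primary inductive argument for the lemma is a valid but longer detour, and your explicit check of the two junction arcs $L_n$--$R_n$ and $R_1$--$L_1$ is a worthwhile verification the paper leaves implicit.
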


    \begin{proof}
        Since $I = J = \nodd$, the colors of the boundary vertices alternate between black and white. For any strand in any perfect matching $\tau$, the number of boundary vertices on each side of the strand is even. Thus, the two endpoints of the strand have different colors.
    \end{proof}
    \begin{cor}\label{cor:all-type}
        For $I = J = \nodd$,
        \[ \Delta_{I,J}(A)\cdot\Delta_{\Bar{I}\Bar{J}}(A) = \sum_{\tau}\Imm^{\TL}_\tau(A), \]
        where the sum is over all Temperley-Lieb basis elements.
    \end{cor}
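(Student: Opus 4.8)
The statement is an immediate consequence of the two results just established. The plan is to invoke Theorem \ref{thm:prod-minor} with the specific choice $I = J = \nodd$, which gives
\[ \Delta_{I,J}(A)\cdot\Delta_{\Bar{I},\Bar{J}}(A) = \sum_{\tau\in\Theta(I,J)}\Imm^{\TL}_\tau(A), \]
and then to identify the index set $\Theta(I,J)$ with the full set of Temperley-Lieb basis elements. The latter identification is exactly the content of Proposition \ref{prop:compatible}: when $I = J = \nodd$, every perfect noncrossing matching $\tau$ of the $2n$ boundary vertices is compatible with the coloring $(I,J)$, so $\Theta(\nodd,\nodd)$ is the set of all basis elements of $TL_n(2)$. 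Substituting this into the sum yields the claimed identity.

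There is essentially no obstacle here; the work has all been done in Proposition \ref{prop:compatible}, whose proof rests on the parity observation that an alternating coloring of the boundary vertices forces each strand of a noncrossing matching to have endpoints of opposite colors (since each strand cuts off an even number of boundary vertices on either side). The only thing to be slightly careful about is the bookkeeping: one should note that $\Bar{I} = \Bar{J} = [n] \setminus \nodd$ is the set of even indices in $[n]$, so that $\Delta_{\Bar I, \Bar J}(A)$ in the corollary matches the minor $\Delta_{\Bar I \Bar J}(A)$ appearing in Theorem \ref{thm:prod-minor}, and that the sum ranging over "all Temperley-Lieb basis elements" is literally the same as the sum over $\tau \in \Theta(\nodd, \nodd)$.

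In short, the proof reads: By Theorem \ref{thm:prod-minor} applied to $I = J = \nodd$, we have $\Delta_{I,J}(A)\cdot\Delta_{\Bar I, \Bar J}(A) = \sum_{\tau \in \Theta(I,J)} \Imm^{\TL}_\tau(A)$. By Proposition \ref{prop:compatible}, $\Theta(\nodd, \nodd)$ consists of all Temperley-Lieb basis elements, so the right-hand side is the sum over all such $\tau$, as desired. $\square$
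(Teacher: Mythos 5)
Your proposal is correct and is precisely the argument the paper intends: the corollary follows by substituting $I = J = \nodd$ into Theorem \ref{thm:prod-minor} and using Proposition \ref{prop:compatible} to identify $\Theta(\nodd,\nodd)$ with the full set of Temperley-Lieb basis elements. No issues.
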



\section{Shuffle tableaux}\label{sec:shuffle-tableau}

    We now turn our attention to tableaux. We will index the rows of the tableaux from top to bottom, and the columns from left to right. We index by $(i,j)$ the entry on row $i$ column $j$ of the tableau. For a partition $\mu = (\mu_1,\ldots,\mu_\ell)$, we say $\ell(\mu) = \ell$.

    Given sequences $\{f_1 \leq f_2 \leq \ldots \leq f_m\}$ and $\{g_1 \leq g_2 \leq \ldots \leq g_n\}$, we say that $f$ \textit{interlaces} $g$ if $m = n-1$ and
    \[ g_1 \leq f_1 \leq g_2 \leq f_2 \leq \ldots \leq f_{n-1} \leq g_n. \]
    We say that $f$ \textit{alternates left of} $g$ if $m = n$ and
    \[ f_1 \leq g_1 \leq f_2 \leq g_2 \leq \ldots \leq f_n \leq g_n. \]
    Finally, we say $f$ \textit{interleaves} $g$, denoted $f \ll g$, if $f$ either interlaces or alternates left of $g$.
    
    Given two skew shapes $\mu_R/\nu_R$ and $\mu_R/\nu_R$, let $\rho_R = \left(\ell(\mu_R), \ell(\mu_R)-1, \ldots, 1\right)$ and $\rho'_R = \left(\ell(\mu_R), \ell(\mu_R)-1, \ldots, 2\right)$. We say $\mu_B/\nu_B$ interleaves $\mu_R/\nu_R$, denoted $\mu_B/\nu_B \ll \mu_R/\nu_R$, if $\ell(\mu_R) - \ell(\mu_B) = 1$, and
    \[ (\mu_B + \rho'_R) \ll (\mu_R + \rho_R), ~\text{and}~ (\nu_B + \rho'_R) \ll (\nu_R + \rho_{\ell}); \]
    or $\ell(\mu_R) - \ell(\mu_B) = 0$, and
    \[ (\mu_B + \rho_R) \ll (\mu_R + \rho_R), ~\text{and}~ (\nu_B + \rho_R) \ll (\nu_R + \rho_{\ell}). \]

    \begin{example}\label{exp:interleave-shape}
        We have $(7,5,2)/(2,0,0) \ll (7,6,4) / (5,1,0)$ since
        \[ (7,5,2) + (3,2,1) = (10,7,3) \ll (10,8,5) = (7,6,4) + (3,2,1), \]
        and
        \[ (2,0,0) + (3,2,1) = (5,2,1) \ll (8,3,1) = (5,1,0) + (3,2,1). \]
        Similarly, $(7,5)/(2,0) \ll (7,6,4) / (5,1,0)$ since
        \[ (7,5) + (3,2) = (10,7) \ll (10,8,5) = (7,6,4) + (3,2,1), \]
        and
        \[ (2,0) + (3,2) = (5,2) \ll (8,3,1) = (5,1,0) + (3,2,1). \]
    \end{example}

    \begin{definition}[Shuffle tableau]\label{def:shuffle-tableau}
        Given two semi-standard Young tableaux $T_B$ of shape $\mu_B/\nu_B$ and $T_R$ of shape $\mu_R/\nu_R$ such that $\mu_B/\nu_B \ll \mu_R/\nu_R$, the shuffle tableau $T$ of $T_R$ and $T_B$ is constructed as follows.
        \begin{itemize}
            \item Let $a$ be the value of square $(i,j)$ in $T_R$, then fill square $(2i-1,2j-1)$ of $T$ with $a$.
            \item Let $a$ be the value of square    $(i,j)$ in $T_B$, then fill square $(2i,2j)$ of $T$ with $a$.
        \end{itemize}
        In addition, let $\ell = \ell(\mu_R)$, we denote the shape of $T$ by $\mu\oslash\nu$, where
        \[ \mu = ((\mu_R)_1 + \ell, (\mu_B)_1 + \ell, (\mu_R)_2 + \ell - 1, (\mu_B)_2 + \ell - 1, \ldots), \]
        \[ \nu = ((\nu_R)_1 + \ell, (\nu_B)_1 + \ell, (\nu_R)_2 + \ell - 1, (\nu_B)_2 + \ell - 1, \ldots). \]
        Obverse that the number of squares on row $i$ of $T$ is still $\mu_i - \nu_i$. Finally, we denote $\ell(T) := \ell(\mu)$.
    \end{definition}

    The notation $\mu\oslash\nu$ may seem unnecessarily complicated for now, but we will see shortly that there is a bijection between shuffle tableaux of shape $\mu\oslash\nu$ and colored covers of wirings of $G_{\mu,\nu}$.

    \begin{example}\label{exp:shuffle-tableau}
        Let $T_R$ be
        \[ \begin{ytableau} 
        \none & \none & \none & \none & \none & \textcolor{red}{1} & \textcolor{red}{3} \\
        \none & \textcolor{red}{1} & \textcolor{red}{1} & \textcolor{red}{1} & \textcolor{red}{2} & \textcolor{red}{3} \\ 
        \textcolor{red}{1} & \textcolor{red}{2} & \textcolor{red}{2} & \textcolor{red}{3}
        \end{ytableau} \]
        of shape $(7,6,4) / (5,1,0)$ and $T_B$ be
        \[ \begin{ytableau} 
        \none & \none & \textcolor{blue}{2} & \textcolor{blue}{2} & \textcolor{blue}{2} & \textcolor{blue}{4} & \textcolor{blue}{4} \\
        \textcolor{blue}{1} & \textcolor{blue}{2} & \textcolor{blue}{4} & \textcolor{blue}{4} & \textcolor{blue}{4} \\ 
        \textcolor{blue}{3} & \textcolor{blue}{4}
        \end{ytableau} \]
        of shape $(7,5,2) / (2,0,0)$, the shuffle tableau $T$ of $T_R$ and $T_B$ is
        \[ \begin{ytableau} 
        \none & \none & \none & \none & \none & \none & \none & \none & \none & \none & \textcolor{red}{1} & \none & \textcolor{red}{3} \\
        \none & \none & \none & \none & \none & \textcolor{blue}{2} & \none & \textcolor{blue}{2} & \none & \textcolor{blue}{2} & \none & \textcolor{blue}{4} & \none & \textcolor{blue}{4} \\
        \none & \none & \textcolor{red}{1} & \none & \textcolor{red}{1} & \none & \textcolor{red}{1} & \none & \textcolor{red}{2} & \none & \textcolor{red}{3} \\ 
        \none & \textcolor{blue}{1} & \none & \textcolor{blue}{2} & \none & \textcolor{blue}{4} & \none & \textcolor{blue}{4} & \none & \textcolor{blue}{4} \\ 
        \textcolor{red}{1} & \none & \textcolor{red}{2} & \none & \textcolor{red}{2} & \none & \textcolor{red}{3} \\ 
        \none & \textcolor{blue}{3} & \none & \textcolor{blue}{4}
        \end{ytableau} \]
        of shape $\mu\oslash\nu$, where
        \[ \mu = (10,10,8,7,5,3), ~\text{and}~ \nu = (8,5,3,2,1,1). \]
    \end{example}

    \begin{lemma}\label{lem:shuffle-props}
        Given a shuffle tableau $T$ of shape $\mu\oslash\nu$, for each row $i$, let $\startT(i)$ (resp. $\endT(i)$) be the column containing the leftmost (resp. rightmost) square on row $i$.
        \begin{enumerate}
            \item[(a)] The entries in $T$ are weakly increasing in each row and strictly increasing in each column.
            \item[(b)] We have, $\startT(i) \leq \startT(i-1) + 1$ for all $1 < i \leq \ell(T)$, and $\startT(i) \leq \startT(i-2)$ for all $2 < i < \ell(T)$. Similarly, $\endT(i) \leq \endT(i-1) + 1$ for all $1 < i \leq \ell(T)$, and $\endT(i) \leq \endT(i-2)$ for all $2 < i < \ell(T)$.
            \item[(c)] The sequence $(\startT(1),\startT(2),\ldots,\startT(\ell(T)))$ is related to $\nu$ by
            \[ \nu_{2i+1} = \dfrac{\startT(2i+1)-1}{2} + \left\lceil \dfrac{\ell(T)}{2} \right\rceil - i, \]
            \[ \nu_{2i+2} = \dfrac{\startT(2i+2)-2}{2} + \left\lceil \dfrac{\ell(T)}{2} \right\rceil - i. \]
            Similarly, the sequence $(\endT(1),\endT(2),\ldots,\endT(\ell(T)))$ is related to $\mu$ by
            \[ \mu_{2i+1} = \dfrac{\endT(2i+1)+1}{2} + \left\lceil \dfrac{\ell(T)}{2} \right\rceil - i, \]
            \[ \mu_{2i+2} = \dfrac{\endT(2i+2)}{2} + \left\lceil \dfrac{\ell(T)}{2} \right\rceil - i. \]
        \end{enumerate}
    \end{lemma}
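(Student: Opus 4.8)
The plan is to reduce all three parts to a single explicit dictionary relating the rows and columns of $T$ to those of its two constituents $T_R,T_B$, and then to read off (a) and (c) immediately and (b) from the interleaving hypothesis. Since $T_R$ is a semistandard tableau of skew shape $\mu_R/\nu_R$, its row $i$ occupies columns $(\nu_R)_i+1,\ldots,(\mu_R)_i$; the placement rule $T_R(i,j)\mapsto T(2i-1,2j-1)$ then shows that row $2i-1$ of $T$ occupies exactly the odd columns $2(\nu_R)_i+1,2(\nu_R)_i+3,\ldots,2(\mu_R)_i-1$ and carries the entries of row $i$ of $T_R$, in the same order. Symmetrically, $T_B(i,j)\mapsto T(2i,2j)$ shows that row $2i$ of $T$ occupies the even columns $2(\nu_B)_i+2,\ldots,2(\mu_B)_i$ and carries row $i$ of $T_B$. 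In particular,
\[ \startT(2i-1)=2(\nu_R)_i+1,\quad \endT(2i-1)=2(\mu_R)_i-1,\quad \startT(2i)=2(\nu_B)_i+2,\quad \endT(2i)=2(\mu_B)_i. \]
Here we use, as the statement tacitly does, that every row of $T$ up to $\ell(T)$ — equivalently every row of $T_R$ and of $T_B$ — is nonempty, which is precisely what makes $\startT,\endT$ well-defined. Dually, column $2j-1$ of $T$ meets only odd rows and there reproduces column $j$ of $T_R$, and column $2j$ meets only even rows and reproduces column $j$ of $T_B$.

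Granting this dictionary, part (a) is immediate: each row of $T$ is a row of $T_R$ or of $T_B$, hence weakly increasing, and each column of $T$ is a column of $T_R$ or of $T_B$, hence strictly increasing. Part (c) is a substitution. Definition \ref{def:shuffle-tableau} gives $\nu_{2i-1}=(\nu_R)_i+\ell-i+1$, $\nu_{2i}=(\nu_B)_i+\ell-i+1$ and the analogous formulas for $\mu$, with $\ell=\ell(\mu_R)$; one also checks that $\ell=\lceil\ell(T)/2\rceil$, since $\ell(T)\in\{2\ell-1,2\ell\}$ in the two branches of the definition of $\ll$. Feeding $(\nu_R)_{i+1}=(\startT(2i+1)-1)/2$, $(\nu_B)_{i+1}=(\startT(2i+2)-2)/2$, $(\mu_R)_{i+1}=(\endT(2i+1)+1)/2$, $(\mu_B)_{i+1}=\endT(2i+2)/2$ into those identities yields the four displayed equalities of (c).

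For part (b) I would reduce each inequality, via the dictionary, to a comparison of parts of $\mu_R,\nu_R,\mu_B,\nu_B$. The skip-two inequalities $\startT(i)\le\startT(i-2)$ and $\endT(i)\le\endT(i-2)$ compare two odd rows or two even rows, so they become $(\nu_R)_{k+1}\le(\nu_R)_k$, $(\nu_B)_k\le(\nu_B)_{k-1}$, $(\mu_R)_{k+1}\le(\mu_R)_k$, $(\mu_B)_k\le(\mu_B)_{k-1}$ — true simply because these sequences are partitions. The consecutive inequalities $\startT(i)\le\startT(i-1)+1$ and $\endT(i)\le\endT(i-1)+1$ compare an odd row with an even one, and after clearing the factor $2$ they become
\[ (\nu_B)_k\le(\nu_R)_k,\quad (\nu_R)_{k+1}\le(\nu_B)_k+1,\quad (\mu_B)_k\le(\mu_R)_k,\quad (\mu_R)_{k+1}\le(\mu_B)_k+1. \]
These are exactly the elementwise inequalities encoded by the hypothesis $\mu_B/\nu_B\ll\mu_R/\nu_R$: unwinding the definition of $\ll$ — comparing the staircase-shifted partitions $\mu_B+\rho'_R,\ \mu_R+\rho_R,\ \nu_B+\rho'_R,\ \nu_R+\rho_R$ (or the $\rho_R$-versions in the case $\ell(\mu_R)=\ell(\mu_B)$) as increasing sequences via the definitions of \emph{interlaces} and \emph{alternates left of} — gives precisely $(\lambda_B)_k\le(\lambda_R)_k$ and $(\lambda_R)_k\le(\lambda_B)_{k-1}+1$ for $\lambda\in\{\mu,\nu\}$. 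Combining the two families yields (b).

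The one genuinely fiddly step, which I expect to be the main obstacle, is this last translation of $\ll$ into elementwise inequalities: one must keep track that the staircase-shifted partitions are compared in increasing order, handle both branches of the definition of $\ll$ (length difference $0$ vs.\ $1$) uniformly, and align indices correctly under the shift by $\rho_R$ or $\rho'_R$. Everything else follows mechanically from the first-paragraph dictionary; in particular, the boundary restrictions in (b) (such as $2<i<\ell(T)$) are there only to ensure that every row referred to is nonempty.
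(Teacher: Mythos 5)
Your proof is correct and follows the same route as the paper's (which simply asserts that (b) and (c) "follow straight from the definition" and gives your exact one-line observation for (a)); you have merely made the implicit dictionary $\startT(2i-1)=2(\nu_R)_i+1$, $\startT(2i)=2(\nu_B)_i+2$, $\endT(2i-1)=2(\mu_R)_i-1$, $\endT(2i)=2(\mu_B)_i$ explicit and verified that the reversed, staircase-shifted interleaving conditions unpack to the needed elementwise inequalities. The translation checks out in both branches of the definition of $\ll$, and your flagged assumption that the relevant rows are nonempty is indeed tacit in the statement itself.
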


    \begin{proof}
        (b) and (c) follows straight from the definition. For (a), observe that the entries in the same row (resp. column) in $T$ are exactly those on the same row (resp. column) in $T_R$ or $T_B$, so the inequalities follow.
    \end{proof}

    \begin{prop}\label{prop:cover-shuffle}
        Given partitions $\mu$ and $\nu$ with no three equal parts, let $n = \ell(\mu)$, and let $I = J = \nodd$. Then there is a bijection, denoted $\Phi$, between colored covers of wirings in $G_{\mu,\nu}$ by $(I,J)$ and shuffle tableaux of shape $\mu\oslash\nu$.
    \end{prop}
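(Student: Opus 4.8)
The plan is to construct $\Phi$ by separating a colored cover into its two monochromatic path families and converting each, via the Lindström--Gessel--Viennot (LGV) dictionary between non-crossing families of lattice paths and skew semistandard tableaux, into one of the two tableaux making up a shuffle tableau; bijectivity will then drop out of a generating-function identity, so that the inverse need not be written by hand. In detail: given a colored cover $x$ of a wiring $H$ of $G_{\mu,\nu}$ by $(\nodd,\nodd)$, pass to the associated Temperley--Lieb diagram (contract doubly covered subpaths, split each in-degree-$2$ out-degree-$2$ vertex); the red edges then form a vertex-disjoint, non-crossing family of lattice paths whose $i$-th member runs from $L_{2i-1}$ to $R_{2i-1}$, with arcs of internal loops absorbed into these paths along the doubly covered edges that attach them (this is where the $2^{\epsilon(H)}$ orientation choices of Proposition \ref{prop:colored_path} live; cf.\ Remark \ref{rem:loop-and-order}), and the blue edges likewise give a family joining the even-indexed sources to the even-indexed sinks. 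Because $\mu,\nu$ have no three equal parts the source columns $\mu_1>\mu_3>\cdots$ and sink columns $\nu_1>\nu_3>\cdots$ are strictly decreasing, and the relations $\mu_{2i-1}=(\mu_R)_i+\ell+1-i$, $\nu_{2i-1}=(\nu_R)_i+\ell+1-i$ from Definition \ref{def:shuffle-tableau} (with $\ell=\lceil n/2\rceil$) identify them with the usual $(-i)$-shifted source and sink positions for the skew shape $\mu_R/\nu_R$; so LGV turns the red family into a semistandard tableau $T_R$ of shape $\mu_R/\nu_R$ (row $i$ reading off, from sink to source, the heights of the horizontal steps of the $i$-th red path) and the blue family into a semistandard tableau $T_B$ of shape $\mu_B/\nu_B$. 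Since $\mu$ and $\nu$ are partitions, one checks directly from the definition of $\ll$ that $\mu_B/\nu_B\ll\mu_R/\nu_R$ always holds, so $(T_R,T_B)$ assembles into a shuffle tableau $T=\Phi(x)$ of shape $\mu\oslash\nu$.

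Next, $\Phi$ is content preserving. With the lattice weights of Section \ref{subsec:JTmatrix}, $\omega(x)=\omega(H)=\prod_i\omega(\pi_i)$ is the product of $x_h$ over all horizontal edges of $H$ counted with multiplicity: a singly covered horizontal edge at height $h$ is a horizontal step of exactly one colored path and contributes $x_h$ to exactly one of $x^{T_R},x^{T_B}$, while a doubly covered one is a horizontal step of one red path and one blue path and contributes $x_h^2$ to $x^{T_R}x^{T_B}$. Hence $\omega(x)=x^{T_R}x^{T_B}=x^{\Phi(x)}$; flipping the orientation of an internal loop redistributes that loop's horizontal edges between $T_R$ and $T_B$ without changing the product.

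Now bijectivity follows by counting. As $I=J=\nodd$ is compatible with every Temperley--Lieb type (Proposition \ref{prop:compatible}), Corollaries \ref{cor:colored-cover} and \ref{cor:all-type} give
\[ \sum_{(H,x)}\omega(x)=\sum_{\tau}\Imm^{\TL}_\tau(A)=\Delta_{\nodd,\nodd}(A)\cdot\Delta_{\overline{\nodd},\overline{\nodd}}(A), \]
the left sum ranging over all wirings $H$ of $G_{\mu,\nu}$ and all $x\in\Cov(H)_{\nodd,\nodd}$; expanding each minor by the Jacobi--Trudi determinant formula gives $\Delta_{\nodd,\nodd}(A)=s_{\mu_R/\nu_R}$ and $\Delta_{\overline{\nodd},\overline{\nodd}}(A)=s_{\mu_B/\nu_B}$, whose product is the content generating function $\sum_T x^T$ over all shuffle tableaux $T$ of shape $\mu\oslash\nu$ (a skew Schur function being the generating function of its semistandard tableaux). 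Thus the content generating functions of the domain and codomain of $\Phi$ coincide, term by term and with finite coefficients once the number of variables is fixed; together with the two previous steps this forces $|\Phi^{-1}(T)|=1$ for each shuffle tableau $T$, i.e.\ $\Phi$ is a bijection. Should an explicit inverse be desired, one reverses the construction: recover the two vertex-disjoint path families from $T_R,T_B$ by inverse LGV (Lemma \ref{lem:shuffle-props}(a) guarantees they split off as genuine semistandard tableaux), overlay them --- declaring an edge doubly covered precisely when it lies in both families --- and undo the Temperley--Lieb reduction to reinstate crossings and loops.

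The step that requires real care is the first one: making the passage from a colored cover to a pair of vertex-disjoint LGV families fully precise. One must verify that after the Temperley--Lieb reduction each color class is indeed a non-crossing, vertex-disjoint family of lattice paths in which (for the red class) the $i$-th path joins $L_{2i-1}$ to $R_{2i-1}$ --- so that the classical LGV bijection applies verbatim --- and that the internal loops get absorbed into these paths in a way that records the $2^{\epsilon(H)}$ orientation choices faithfully rather than collapsing distinct covers together. The shape identities are immediate from Definition \ref{def:shuffle-tableau} and Lemma \ref{lem:shuffle-props}, the weight bookkeeping is routine, and the bijectivity is then a formal consequence of identities already in hand.
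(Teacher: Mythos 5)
Your construction of $\Phi$ is the same one the paper uses: a colored cover by $(\nodd,\nodd)$ is precisely its pair of noncrossing red and blue lattice-path families, and each family is converted into a skew SSYT ($T_R$ of shape $\mu_R/\nu_R$ from the odd-indexed paths, $T_B$ of shape $\mu_B/\nu_B$ from the even-indexed ones) by the standard paths-to-tableaux dictionary; the paper's proof consists of exactly this observation. One small correction to your first step: the Temperley--Lieb reduction (contracting doubly covered subpaths, splitting degree-$4$ vertices) should play no role here, and invoking it is actively harmful, since it destroys the lattice structure you need to read off the tableaux. By Definition \ref{def:colored-cover} the red and blue families are already noncrossing families of lattice paths in $G_{\mu,\nu}$; the tableaux are read directly from the colored cover, and the reduction is only used later to compute the type $\psi$.

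The more serious issue is the generating-function argument for bijectivity, which does not work as stated. Content-preservation gives $\sum_T|\Phi^{-1}(T)|\,x^T=\sum_T x^T$, but distinct shuffle tableaux of shape $\mu\oslash\nu$ can share the same content monomial, so matching coefficients only tells you that the fibers of $\Phi$ over tableaux of a fixed content have average size one; it does not force $|\Phi^{-1}(T)|=1$ for each $T$ unless you have already established injectivity or surjectivity by other means. Fortunately the counting is unnecessary: the correspondence between noncrossing lattice-path families with prescribed sources and sinks and skew SSYT is itself a bijection, applied once to the red family and once to the blue family, and a colored cover is determined by its pair of families (conversely, any pair of tableaux of shapes $\mu_B/\nu_B\ll\mu_R/\nu_R$ produces such a pair of families, whose overlay is a wiring with its coloring). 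That family-by-family bijection, which you relegate to a parenthetical and to the closing sketch of the inverse, is the entire proof; lead with it and drop the counting paragraph.
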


    \begin{proof}
        Colored covers of wirings in $G_{\mu,\nu}$ by $(I,J)$ biject with pairs of two families of noncrossing paths, colored red and blue. The red family gives a SSYT $T_R$ of shape $\mu_R/\nu_R$ where
        \[ \mu_R = (\mu_{2i-1}+i-k-1)_{i = 1}^k, \]
        \[ \nu_R = (\nu_{2i-1}+i-k-1)_{i = 1}^k, \]
        where $k = \left\lfloor\dfrac{n+1}{2}\right\rfloor$. The blue family gives a SSYT $T_B$ of shape $\mu_B/\nu_B$ where
        \[ \mu_B = (\mu_{2i}+i-k-1)_{i = 1}^k, \]
        \[ \nu_B = (\nu_{2i}+i-k-1)_{i = 1}^k, \]
        where $k = \left\lfloor\dfrac{n}{2}\right\rfloor$. The shuffle tableau of $T_R$ and $T_B$ is a shuffle tableau of shape $\mu\oslash\nu$.
    \end{proof}
    
    \begin{example}\label{exp:red-blue-tableau}
        Figure \ref{fig:cover-SSYT} shows a colored cover of a wiring when $\mu = (10,10,8,7,5,3)$ and $\nu = (8,5,3,2,1,1)$.
    
        \begin{figure}[h!]
            \centering
            \includegraphics[scale = 0.8]{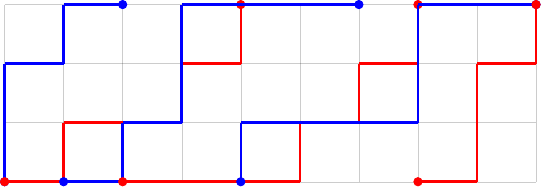}
            \caption{A colored cover}
            \label{fig:cover-SSYT}
        \end{figure}
        
        The corresponding SSYTs $T_R$ and $T_B$ of this colored cover are those in Example \ref{exp:shuffle-tableau}.
    \end{example}
    
    Next, we will define the notion of Temperley-Lieb type for shuffle tableaux. We will then show that this agrees with the Temperley-Lieb type of colored covers. That is, the Temperley-Lieb type of a colored cover is the same as that of the corresponding shuffle tableau. From the shuffle tableau, we obtain the Temperley-Lieb type as follows: for each pair
    \[ \begin{ytableau} 
    \none & j \\ 
    i & \none
    \end{ytableau}, \]
    if $i \leq j$, then draw two lines
    \[ \begin{tikzcd}[sep = tiny]
	{} & j \\
	i & {}
	\arrow[no head, from=2-1, to=1-1]
	\arrow[no head, from=1-2, to=2-2]
    \end{tikzcd}; \]
    if $i > j$, then draw two lines
    \[ \begin{tikzcd}[sep = tiny]
	{} & j \\
	i & {}
	\arrow[no head, from=2-1, to=2-2]
	\arrow[no head, from=1-2, to=1-1]
    \end{tikzcd}. \]
    By convention, the squares North and West of the tableau have value $-\infty$, and those South and East of the tableau have value $\infty$. Finally, we put $L_i$ at the end of the $i$th row and $R_i$ at the beginning of the $i$th row.
    
    \begin{example}\label{exp:tableau-diagram}
        From the shuffle tableau in Example \ref{exp:shuffle-tableau}, we have the following diagram.
        \[\begin{tikzcd}[sep=tiny]
    	&&&&&&&&&& {R_1} & 1 & {} & 3 & {} & {L_1} \\
    	&&&&& {R_2} & 2 & {} & 2 & {} & 2 & {} & 4 & {} & 4 & {L_2} \\
    	&& {R_3} & 1 & {} & 1 & {} & 1 & {} & 2 & {} & 3 & {L_3} \\
    	& {R_4} & 1 & {} & 2 & {} & 4 & {} & 4 & {} & 4 & {L_4} \\
    	{R_5} & 1 & {} & 2 & {} & 2 & {} & 3 & {L_5} \\
    	{R_6} & {} & 3 & {} & 4 & {L_6}
    	\arrow[no head, from=6-3, to=6-4]
    	\arrow[no head, from=5-4, to=5-3]
    	\arrow[no head, from=6-5, to=6-6]
    	\arrow[no head, from=5-6, to=5-5]
    	\arrow[no head, from=6-4, to=6-5]
    	\arrow[no head, from=5-2, to=4-2]
    	\arrow[no head, from=4-3, to=5-3]
    	\arrow[no head, from=5-4, to=4-4]
    	\arrow[no head, from=4-5, to=5-5]
    	\arrow[no head, from=5-6, to=4-6]
    	\arrow[no head, from=4-7, to=5-7]
    	\arrow[no head, from=4-9, to=5-9]
    	\arrow[no head, from=4-8, to=5-8]
    	\arrow[no head, from=5-7, to=5-8]
    	\arrow[no head, from=4-11, to=4-10]
    	\arrow[no head, from=4-3, to=3-3]
    	\arrow[no head, from=3-4, to=4-4]
    	\arrow[no head, from=5-2, to=6-2]
    	\arrow[no head, from=5-1, to=6-1]
    	\arrow[no head, from=6-2, to=6-3]
    	\arrow[no head, from=4-5, to=4-6]
    	\arrow[no head, from=3-6, to=3-5]
    	\arrow[no head, from=4-7, to=4-8]
    	\arrow[no head, from=3-8, to=3-7]
    	\arrow[no head, from=4-9, to=4-10]
    	\arrow[no head, from=3-10, to=3-9]
    	\arrow[no head, from=4-11, to=4-12]
    	\arrow[no head, from=3-12, to=3-11]
    	\arrow[no head, from=3-6, to=2-6]
    	\arrow[no head, from=3-4, to=3-5]
    	\arrow[no head, from=2-7, to=3-7]
    	\arrow[no head, from=3-8, to=2-8]
    	\arrow[no head, from=2-9, to=3-9]
    	\arrow[no head, from=2-11, to=3-11]
    	\arrow[no head, from=3-12, to=2-12]
    	\arrow[no head, from=2-13, to=3-13]
    	\arrow[no head, from=2-11, to=2-12]
    	\arrow[no head, from=1-12, to=1-11]
    	\arrow[no head, from=3-10, to=2-10]
    	\arrow[no head, from=2-9, to=2-10]
    	\arrow[no head, from=2-7, to=2-8]
    	\arrow[no head, from=2-15, to=1-15]
    	\arrow[no head, from=2-13, to=2-14]
    	\arrow[no head, from=1-14, to=1-13]
    	\arrow[no head, from=1-16, to=2-16]
    	\arrow[no head, from=1-14, to=1-15]
    	\arrow[no head, from=2-14, to=2-15]
    	\arrow[no head, from=1-13, to=1-12]
        \end{tikzcd}\]
        This means that the Temperley-Lieb type is
        \[ \resizebox{!}{0.25\textwidth}{
            \begin{tikzpicture}
                \draw (0,0) node[anchor=center] {$L_1$};
                \draw (0,-1) node[anchor=center] {$L_2$};
                \draw (0,-2) node[anchor=center] {$L_3$};
                \draw (0,-3) node[anchor=center] {$L_4$};
                \draw (0,-4) node[anchor=center] {$L_5$};
                \draw (0,-5) node[anchor=center] {$L_6$};
                \draw (3,0) node[anchor=center] {$R_1$};
                \draw (3,-1) node[anchor=center] {$R_2$};
                \draw (3,-2) node[anchor=center] {$R_3$};
                \draw (3,-3) node[anchor=center] {$R_4$};
                \draw (3,-4) node[anchor=center] {$R_5$};
                \draw (3,-5) node[anchor=center] {$R_6$};
                \filldraw[black] (0.5,0) circle (2pt);
                \filldraw[black] (0.5,-1) circle (2pt);
                \filldraw[black] (0.5,-2) circle (2pt);
                \filldraw[black] (0.5,-3) circle (2pt);
                \filldraw[black] (0.5,-4) circle (2pt);
                \filldraw[black] (0.5,-5) circle (2pt);
                \filldraw[black] (2.5,0) circle (2pt);
                \filldraw[black] (2.5,-1) circle (2pt);
                \filldraw[black] (2.5,-2) circle (2pt);
                \filldraw[black] (2.5,-3) circle (2pt);
                \filldraw[black] (2.5,-4) circle (2pt);
                \filldraw[black] (2.5,-5) circle (2pt);
                \draw[thick] [bend right = 90, looseness=1.25] (0.5, -1) to (0.5, 0);
                \draw[thick] [bend right = 90, looseness=1.25] (1.5, 0) to (1.5, -1);
                \draw[thick] [bend right = 90, looseness=1.25] (0.5, -4) to (0.5, -3);
                \draw[thick] [bend right = 90, looseness=1.25] (1.5, -3) to (1.5, -4);
                \draw[thick] (0.5,-2) to (1.5,-2);
                \draw[thick] (0.5,-5) to (1.5,-5);
                
                \draw[thick] [bend right = 90, looseness=1.25] (1.5, -2) to (1.5, -1);
                \draw[thick] [bend right = 90, looseness=1.25] (2.5, -1) to (2.5, -2);
                \draw[thick] [bend right = 90, looseness=1.25] (1.5, -5) to (1.5, -4);
                \draw[thick] [bend right = 90, looseness=1.25] (2.5, -4) to (2.5, -5);
                \draw[thick] (1.5,0) to (2.5,0);
                \draw[thick] (1.5,-3) to (2.5,-3);
            \end{tikzpicture}
        }. \]
    \end{example}

    We denote the Temperley-Lieb type of a shuffle tableau $T$ by $\psi(T)$. One can check that the Temperley-Lieb type in Example \ref{exp:tableau-diagram} is the same as the type of the wiring in Figure \ref{fig:cover-SSYT}.

    \begin{prop}\label{prop:correct-type}
        Let $x$ be a colored cover of a wiring $H$ of $G_{\mu,\nu}$, and let $T = \Phi(x)$ be the corresponding shuffle tableau. Then $x$ and $T$ have the same Temperley-Lieb type, i.e.
        \[ \psi(x) =  \psi(T). \]
    \end{prop}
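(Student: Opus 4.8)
The plan is to extract both $\psi(H)$ and $\psi(T)$ from the \emph{same} local data carried by the colored cover $x$, so that the two noncrossing matchings coincide tile by tile. I would first recall the description of $T=\Phi(x)$ from Proposition~\ref{prop:cover-shuffle}: the red family of noncrossing lattice paths of $x$ is the family of path representatives of the rows of $T_R$ (the odd rows of $T$) and the blue family is the one for $T_B$ (the even rows), under the classical Lindström--Gessel--Viennot dictionary between noncrossing lattice paths on $G_{\mu,\nu}$ and semistandard tableaux. Unwinding that dictionary, the $t$-th entry (taken with the shift built into $\rho_R,\rho'_R$) of row $r$ of $T$ records the $y$-level at which the $r$-th colored path $p_r$ makes its $t$-th westward step; here the colored paths are labelled $p_1,\dots,p_n$ as in Definition~\ref{def:colored-cover}, each is a monotone (south/west) path, and the odd-indexed ones are red while the even-indexed ones are blue.

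The heart of the argument is a bridging lemma: \emph{an edge $e$ of $G_{\mu,\nu}$ is doubly covered in $H$ if and only if it is used by two consecutive colored paths $p_{r-1},p_r$; and, translating through the step-level description above, this occurs precisely at the diagonally adjacent pairs of cells of $T$ whose lower-left entry $i$ (coming from $p_r$) and upper-right entry $j$ (coming from $p_{r-1}$) satisfy $i>j$, the conventions that out-of-tableau cells have value $-\infty$ to the North and West and $\infty$ to the South and East accounting for the turns of the wiring at the boundary.} For the forward direction I would use that a doubly covered edge lies on exactly two wiring strands, hence is colored by exactly one red and one blue path; these two paths are forced to be adjacent in the order $p_1,\dots,p_n$ because they occupy $e$ at the same level and no third path can lie between them (and a doubly covered edge, being colored by two colors, is never an edge of an internal loop, so the ordering of Remark~\ref{rem:loop-and-order} genuinely applies there). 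The converse, and the translation into the inequality $i>j$, come from a short local analysis around a single lattice square, comparing the heights of the two monotone paths $p_{r-1}$ and $p_r$; that this comparison is always a clean dichotomy uses that paths of the same color are noncrossing and that the interleaving conditions $\mu_B/\nu_B\ll\mu_R/\nu_R$ of Definition~\ref{def:shuffle-tableau} are exactly what allows the red and blue families to be superimposed as a wiring with no edge on three paths.

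To finish, I would combine the two descriptions. The Temperley--Lieb reduction of $H$ (contract each doubly covered subpath, then split the resulting indegree-two outdegree-two vertex) places an ``uncross'' tile over each maximal run of doubly covered edges and a ``through/turn'' tile everywhere else; by the bridging lemma these tiles sit exactly over the $i>j$ (resp. $i\le j$) diagonal pairs of $T$, which is by definition where the tableau-diagram construction of $\psi(T)$ places its uncross (resp. through) tiles. A rotation of the lattice by $45^\circ$ identifies the unit squares of the region of $G_{\mu,\nu}$ swept by $x$ with the diagram nodes of $T$, carrying one tiling to the other, and Lemma~\ref{lem:shuffle-props}(c) identifies the rightmost and leftmost squares of row $i$ of $T$ with the source $L_i=(\mu_i,\infty)$ and the sink $R_i=(\nu_i,1)$, so the boundary labels $L_i,R_i$ match up. Hence the two decorated planar diagrams carry the same underlying noncrossing matching, i.e. $\psi(x)=\psi(H)=\psi(T)$.

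The main obstacle is the entrywise part of the bridging lemma: making the shifts, the parity bookkeeping (odd rows/red versus even rows/blue, with the two slightly different formulas of Lemma~\ref{lem:shuffle-props}(c) and Proposition~\ref{prop:cover-shuffle}), the $\pm\infty$ boundary conventions, and the splitting of coinciding sources and sinks all line up so that ``the monotone paths $p_{r-1}$ and $p_r$ collide at this square'' becomes literally ``$i>j$'' for the corresponding diagonal pair of entries. One must also make sure that the relevant data — in the end, just the set of doubly covered edges, hence the uncross pattern — is genuinely independent of which of the $2^{\epsilon(H)}$ colored covers of $H$ one started from; the observation that doubly covered edges avoid internal loops, together with Remark~\ref{rem:loop-and-order}, is what makes this work.
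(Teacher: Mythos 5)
Your strategy --- identify the uncross/through tiles of the tableau diagram with the crossings and doubly covered subpaths of the wiring, via a dictionary ``doubly covered edge $\Leftrightarrow$ adjacent pair with $i>j$'' --- is exactly the paper's strategy, but the paper only establishes that dictionary for the \emph{canonical} colored cover, and for good reason: your bridging lemma is false for a general colored cover. The tableau $T=\Phi(x)$ depends on $x$, not just on $H$, and reorienting an internal loop permutes the entries of $T$ along the diagonals of that loop, which changes the anti-diagonal comparisons $i$ vs.\ $j$ in its interior. The paper's own example exhibits this: the canonical cover of Figure \ref{fig:canonical-cover} and the reoriented cover of Figure \ref{fig:not-canonical-cover} are covers of the \emph{same} wiring (same set of doubly covered edges), yet between rows $1$ and $2$ the canonical tableau has an interior pair $(i,j)=(2,1)$ with $i>j$ (a horizontal tile) exactly where the reoriented tableau has $(i,j)=(1,3)$ with $i\le j$ (a vertical tile), and the tile counts between rows $2$ and $3$ change as well. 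So the uncross pattern read off from $T$ is \emph{not} determined by the set of doubly covered edges of $H$, contrary to the claim in your closing paragraph, and your final tile-by-tile identification of the two diagrams breaks down for every non-canonical cover. The auxiliary claim that a doubly covered edge ``is never an edge of an internal loop'' does not rescue this: loops of the Temperley--Lieb diagram pass through the contracted doubly covered subpaths, and it is precisely the singly covered edges of a loop whose red/blue assignment flips, dragging the tableau entries (and hence the comparisons) with them.

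What is missing is the second half of the paper's argument: after proving the dictionary for the canonical cover (where Remark \ref{rem:loop-and-order} gives the global ordering of $p_1,\dots,p_n$ along any vertical line, so that each $i\le j$ pair really is a crossing of consecutive paths), one must show that passing to an arbitrary cover --- i.e.\ permuting the entries along the diagonals of each loop --- does not change $\psi(T)$ even though it changes the tiling. The paper does this by observing that the diagonals of the canonical tableau are weakly increasing, so after the permutation the entries on the South/East border of a loop can only decrease and those on the North/West border can only increase; hence every comparison between a border cell and a cell outside the loop is preserved, only tiles strictly inside the loop flip, and the induced matching of the boundary vertices $L_i,R_i$ is unchanged. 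Your proof needs this (or an equivalent) loop-invariance step to be complete; as written it proves the proposition only for canonical colored covers.
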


    \begin{proof}
        We first prove the statement for \textit{canonical colored covers} by $I=J= \nodd$. Recall from Definition \ref{def:canonical-cc} that this colored cover is obtained by treating each $\resizebox{!}{0.8em}{\begin{tikzpicture}
            \cross{1}{1}{1}
        \end{tikzpicture}}$ as $\resizebox{!}{0.8em}{\begin{tikzpicture}
            \draw[thick] [bend right = 90, looseness=1.25] (-0.5, 0) to (0.5, 0);
            \draw[thick] [bend right = 90, looseness=1.25] (0.5, -1) to (-0.5, -1);
        \end{tikzpicture}}$. Figure \ref{fig:canonical-cover} shows a canonical colored cover.
    
        \begin{figure}[h!]
            \centering
            \includegraphics[scale = 0.8]{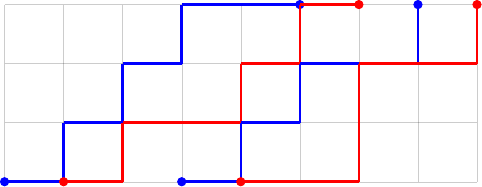}
            \caption{A canonical colored cover}
            \label{fig:canonical-cover}
        \end{figure}
        
        The shuffle tableau of the colored cover in Figure \ref{fig:canonical-cover} is
        \[ \begin{ytableau} 
        \none & \none & \none & \none & \none & 1 & \none & 1 & \none & 3 & \none & 3 \\        
        \none & \none & \none & \none & 1 & \none & 2 & \none & 3 & \none & 3 \\ 
        \none & 1 & \none & 2 & \none & 2 & \none & 3 & \none & 4 \\ 
        1 & \none & 2 & \none & 3 & \none & 4 & \none & 4
        \end{ytableau}. \]
        Recall also that in a canonical colored cover, if a line is drawn through the wiring, then it intersects paths $p_1,p_2,\ldots,p_n$ in that order. Hence, in the shuffle tableau, each pair
        \[ \begin{ytableau} 
        \none & j \\ 
        i & \none
        \end{ytableau}, \]
        with $i \leq j$ corresponds exactly with the intersections between two consecutive lines. Thus, drawing 
        \[ \begin{tikzcd}[sep = tiny]
    	{} & j \\
    	i & {}
    	\arrow[no head, from=2-1, to=1-1]
    	\arrow[no head, from=1-2, to=2-2]
        \end{tikzcd} \]
        when $i \leq j$ corresponds with replacing each intersection $\resizebox{!}{0.8em}{\begin{tikzpicture}
            \cross{1}{1}{1}
        \end{tikzpicture}}$ by $\resizebox{!}{0.8em}{\begin{tikzpicture}
             \uncross{1}{1}{1}
        \end{tikzpicture}}$.
        Hence, the resulting diagram on the shuffle tableau is the same as the diagram on the wiring, with the exception that each double edge is now a loop. This does not change the Temperley-Lieb type, so this gives the correct Temperley-Lieb type. For example, the diagram obtained from the above shuffle tableau is
        \[\begin{tikzcd}[sep=tiny]
    	&&&&& {R_1} & 1 & {} & 1 & {} & 3 & {} & 3 & {L_1} \\
    	&&&& {R_2} & 1 & {} & 2 & {} & 3 & {} & 3 & {L_2} \\
    	& {R_3} & 1 & {} & 2 & {} & 2 & {} & 3 & {} & 4 & {L_3} \\
    	{R_4} & 1 & {} & 2 & {} & 3 & {} & 4 & {} & 4 & {L_4}
    	\arrow[no head, from=1-13, to=1-14]
    	\arrow[no head, from=2-12, to=1-12]
    	\arrow[no head, from=1-13, to=2-13]
    	\arrow[no head, from=1-11, to=1-12]
    	\arrow[no head, from=1-9, to=1-10]
    	\arrow[no head, from=1-7, to=1-8]
    	\arrow[no head, from=2-8, to=2-9]
    	\arrow[no head, from=1-8, to=1-9]
    	\arrow[no head, from=2-6, to=1-6]
    	\arrow[no head, from=1-7, to=2-7]
    	\arrow[no head, from=3-11, to=3-12]
    	\arrow[no head, from=2-12, to=2-11]
    	\arrow[no head, from=3-9, to=2-9]
    	\arrow[no head, from=2-10, to=3-10]
    	\arrow[no head, from=3-7, to=2-7]
    	\arrow[no head, from=2-8, to=3-8]
    	\arrow[no head, from=3-5, to=3-6]
    	\arrow[no head, from=2-6, to=2-5]
    	\arrow[no head, from=1-10, to=2-10]
    	\arrow[no head, from=1-11, to=2-11]
    	\arrow[no head, from=3-3, to=3-4]
    	\arrow[no head, from=4-10, to=3-10]
    	\arrow[no head, from=3-11, to=4-11]
    	\arrow[no head, from=3-9, to=3-8]
    	\arrow[no head, from=4-6, to=4-7]
    	\arrow[no head, from=3-7, to=3-6]
    	\arrow[no head, from=4-4, to=3-4]
    	\arrow[no head, from=3-5, to=4-5]
    	\arrow[no head, from=4-2, to=3-2]
    	\arrow[no head, from=3-3, to=4-3]
    	\arrow[no head, from=4-1, to=4-2]
    	\arrow[no head, from=4-3, to=4-4]
    	\arrow[no head, from=4-5, to=4-6]
    	\arrow[no head, from=4-7, to=4-8]
    	\arrow[no head, from=4-9, to=4-10]
    	\arrow[no head, from=4-8, to=4-9]
        \end{tikzcd},\]
        which is the same as the diagram for the wiring in Figure \ref{fig:canonical-cover}.

        Finally, observe that every colored cover can be obtained from the canonical colored cover by swapping the orientation of some loops. In terms of the shuffle tableau, this means permuting the entries on each diagonal of the loop. For example, swapping the orientation of the biggest loop in Figure \ref{fig:canonical-cover} gives the colored cover in Figure \ref{fig:not-canonical-cover}.

        \begin{figure}[h!]
            \centering
            \includegraphics[scale = 0.8]{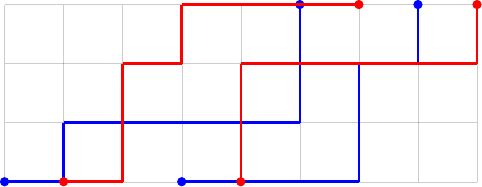}
            \caption{Another colored cover}
            \label{fig:not-canonical-cover}
        \end{figure}

        The diagram of the shuffle tableau corresponding to Figure \ref{fig:not-canonical-cover} is
        \[\begin{tikzcd}[sep=tiny]
    	&&&&& {R_1} & 3 & {} & 3 & {} & 3 & {} & 3 & {L_1} \\
    	&&&& {R_2} & 1 & {} & 1 & {} & 1 & {} & 3 & {L_2} \\
    	& {R_3} & 1 & {} & 3 & {} & 4 & {} & 4 & {} & 4 & {L_3} \\
    	{R_4} & 1 & {} & 2 & {} & 2 & {} & 2 & {} & 2 & {L_4}
    	\arrow[no head, from=1-13, to=1-14]
    	\arrow[no head, from=2-12, to=1-12]
    	\arrow[no head, from=1-13, to=2-13]
    	\arrow[no head, from=1-11, to=1-12]
    	\arrow[no head, from=1-9, to=1-10]
    	\arrow[no head, from=1-7, to=1-8]
    	\arrow[no head, from=2-6, to=1-6]
    	\arrow[no head, from=1-7, to=2-7]
    	\arrow[no head, from=3-11, to=3-12]
    	\arrow[no head, from=2-12, to=2-11]
    	\arrow[no head, from=3-5, to=3-6]
    	\arrow[no head, from=2-6, to=2-5]
    	\arrow[no head, from=1-10, to=2-10]
    	\arrow[no head, from=1-11, to=2-11]
    	\arrow[no head, from=3-3, to=3-4]
    	\arrow[no head, from=4-10, to=3-10]
    	\arrow[no head, from=3-11, to=4-11]
    	\arrow[no head, from=4-4, to=3-4]
    	\arrow[no head, from=3-5, to=4-5]
    	\arrow[no head, from=4-2, to=3-2]
    	\arrow[no head, from=3-3, to=4-3]
    	\arrow[no head, from=4-1, to=4-2]
    	\arrow[no head, from=4-3, to=4-4]
    	\arrow[no head, from=4-5, to=4-6]
    	\arrow[no head, from=4-7, to=4-8]
    	\arrow[no head, from=4-9, to=4-10]
    	\arrow[no head, from=1-8, to=2-8]
    	\arrow[no head, from=1-9, to=2-9]
    	\arrow[no head, from=3-9, to=3-10]
    	\arrow[no head, from=2-9, to=2-10]
    	\arrow[no head, from=3-7, to=3-8]
    	\arrow[no head, from=2-7, to=2-8]
    	\arrow[no head, from=3-8, to=4-8]
    	\arrow[no head, from=3-9, to=4-9]
    	\arrow[no head, from=3-7, to=4-7]
    	\arrow[no head, from=3-6, to=4-6]
        \end{tikzcd},\]
        where the entries on each diagonal of the loop are permuted. However, in a canonical colored cover, if a line is drawn through the wiring, then it intersects paths $p_1,p_2,\ldots,p_n$ in that order. This means that in the shuffle tableau of the canonical colored cover, the entries on each diagonal are weakly increasing, with each equality corresponds to a double edge. This means that after permuting the entries on the diagonals, the entries on the South and East borders of the loop do not increase, and those on the North and West borders do not decrease. This means that the interactions between border entries and outside entries do not change, and the Temperley-Lieb type does not change.
    \end{proof}

    \begin{remark}\label{rem:cycle}
        Recall from Remark \ref{rem:loop-and-order} that if for some colored cover of a wiring $H$ by $I = J$, a vertical line drawn through the wiring intersects path $p_i$ before $p_j$ where $i > j$, then the segments of $p_i$ and $p_j$ that intersect the line have to belong to some loops. In terms of the shuffle tableau, this implies that if there is a pair
        \[ \begin{ytableau} 
        j & \none \\ 
        \none & i
        \end{ytableau} \]
        where $j \geq i$, then both squares belong to some loops in the tableau.
    \end{remark}

    Proposition \ref{prop:correct-type} means that the bijection between shuffle tableaux of shape $\mu\oslash\nu$ and colored covers of $G_{\mu,\nu}$ preserves the Temperley-Lieb type. Let $\omega(T)$ be the weight of $T$, then Corollary \ref{cor:colored-cover} is equivalent to the following corollary.

    \begin{cor}\label{cor:shuffle-tableau}
        For any basis element $\tau$ of $TL_n(2)$ and partitions $\mu,\nu$, we have
        \[ \Imm^{\TL}_\tau(A_{\mu,\nu}) = \sum_{\substack{\text{$T$ of shape $\mu\oslash\nu$} \\ \psi(T) = \tau}} \omega(T). \]
    \end{cor}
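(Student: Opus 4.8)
The plan is to obtain the identity by transporting Corollary \ref{cor:colored-cover} along the bijection $\Phi$ of Proposition \ref{prop:cover-shuffle}, so that essentially all of the work has already been done in the preceding propositions. First I would specialize Corollary \ref{cor:colored-cover} to the coloring $I = J = \nodd$. By Proposition \ref{prop:compatible} this coloring is compatible with \emph{every} Temperley--Lieb basis element $\tau$, so the corollary applies for all $\tau$ and gives
\[ \Imm^{\TL}_\tau(A_{\mu,\nu}) = \sum_{\psi(H) = \tau}\ \sum_{x \in \Cov(H)_{I,J}} \omega(x), \]
the outer sum ranging over wirings $H$ of $G_{\mu,\nu}$ (here $n = \ell(\mu)$). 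A colored cover $x$ records its underlying wiring, and $\psi(x) = \psi(H)$ by the conventions of Definition \ref{def:colored-cover}, so the double sum collapses to a single sum $\sum_{\psi(x)=\tau}\omega(x)$ over all colored covers $x$ of wirings of $G_{\mu,\nu}$ with $\psi(x) = \tau$.

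Next I would apply $\Phi$. By Proposition \ref{prop:cover-shuffle} it is a bijection from these colored covers onto shuffle tableaux of shape $\mu\oslash\nu$; by Proposition \ref{prop:correct-type} it intertwines the two notions of Temperley--Lieb type, $\psi(x) = \psi(\Phi(x))$; and by the definition of $\omega(T)$ as the weight of $\Phi^{-1}(T)$ — equivalently, the monomial $\prod_i x_i^{c_i}$ where $c_i$ counts the occurrences of $i$ among the entries of $T$, which matches $\omega(H)$ since the tableau entries are precisely the labels of the horizontal steps of the red and blue lattice paths — we have $\omega(x) = \omega(\Phi(x))$. Hence $\Phi$ restricts to a weight-preserving bijection between colored covers of type $\tau$ and shuffle tableaux $T$ of shape $\mu\oslash\nu$ with $\psi(T) = \tau$, and substituting into the displayed formula yields the claim.

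The genuinely substantive ingredient is Proposition \ref{prop:correct-type}, which is already established; the remaining steps are bookkeeping. The one point worth double-checking is that $\Phi$ really is weight-preserving, which is immediate once the definitions are unwound: $\omega(x) = \omega(H)$ depends only on the wiring, and under $\Phi$ the horizontal steps of the colored paths — hence the factors of $\omega(H)$ — correspond bijectively to the cells of $T$ with their entries. Under the standing assumption that $\mu$ and $\nu$ have no three equal parts everything above holds as stated; in the degenerate case both sides of the identity vanish (no wirings, hence no shuffle tableaux, on the right, and the immanant is zero on the left), so it holds trivially there as well.
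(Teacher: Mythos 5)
Your proposal is correct and follows essentially the same route as the paper, which likewise derives this corollary by combining Corollary \ref{cor:colored-cover} (specialized to $I=J=\nodd$, valid for all $\tau$ by Proposition \ref{prop:compatible}) with the type-preserving, weight-preserving bijection $\Phi$ of Propositions \ref{prop:cover-shuffle} and \ref{prop:correct-type}. The extra care you take with weight-preservation and the degenerate case is sound but not needed beyond what the paper records.
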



\section{Crystal operators}\label{sec:crys}


    Given a shuffle tableau $T$ of shape $\mu\oslash\nu$, an $(i,i+1)$-overlap is a pair of squares $(s,t)$ such that $s$ contains an $i$, $t$ contains an $i+1$, and $s$ and $t$ are on the same column. If $(s,t)$ is an $(i,i+1)$-overlap pair, we say that $s$ and $t$ are $(i,i+1)$-overlapped. 
    
    \begin{example}\label{exp:overlap}
        In the following tableau $T$,
        \[ \begin{ytableau}    
        \none & \none & \none & \none & \textcolor{red}{1} & \none & \textcolor{red}{1} & \none & \textcolor{red}{1} & \none & 1 & \none & 2 \\ 
        \none & \none & \none & \textcolor{red}{1} & \none & \textcolor{blue}{2} & \none & 3 & \none & 3 & \none & 3 \\ 
        \none & \none & \textcolor{red}{1} & \none & \textcolor{red}{2} & \none & \textcolor{red}{2} & \none & \textcolor{red}{2} & \none & 3 \\ 
        \none & 2 & \none & \textcolor{red}{2} & \none & \textcolor{blue}{3} \\ 
        2 & \none & \textcolor{red}{2}
        \end{ytableau}, \]
        there are five pairs $(1,2)$-overlap (in red), and one pair of $(2,3)$-overlap (in blue).
    \end{example}

    Next, we define the $i$-reading word $w_i(T)$ as follows.
    \begin{enumerate}
        \item Consider the squares consisting of $i$ and $i+1$.
        \item Remove all $(i,i+1)$-overlap pairs.
        \item Iteratively read the remaining squares from bottom to top, left to right.
    \end{enumerate}

    \begin{example}\label{exp:reading-word}
        In the tableau in Example \ref{exp:overlap}, we have
        \[ w_1(T) = 2~|~2~|~|~2~|~1~2 \]
        by reading all non-red $1$s and $2$s, and
        \[ w_2(T) = 2~2~|~2~2~|~2~2~2~3~|~3~3~3~|~2 \]
        by reading all non-blue $2$s and $3$s.
    \end{example}

    In addition, we say square $(r_1,c_1)$ is \textit{right of} square $(r_2,c_2)$ if $r_2 > r_1$ or $r_2 = r_1$ and $c_2 < c_1$. Then, we also say $(r_2,c_2)$ is \textit{left of} $(r_1,c_1)$.

    Finally, we define the crystal operators on $T$ by acting on $w_i(T)$ in the same fashion as classical crystal operators acting on words.

    \begin{definition}[Crystal operators]\label{def:crys-ops}
        The \textit{crystal operators} $E_i$ and $F_i$ act on $w_i(T)$ in the following (standard) way. 
        \begin{itemize}
            \item View each $i$ as a closing parenthesis ``)'' and each $i+1$ as an opening parenthesis ``(''.
            \item Match the parentheses in the usual way.
            \item $E_i$ changes the leftmost unmatched $i+1$ to $i$, and $F_i$ changes the rightmost unmatched $i$ to $i+1$.
        \end{itemize}
   \end{definition}
     
       We induce this action of the crystal operators to the action on shuffle tableaux as follows: 
        \begin{itemize}
            \item the shape of the shuffle tableaux is preserved;
            \item The content changes in the way uniquely determined by $w_i(E_i(T)) = E_i(w_i(T))$ and $w_i(F_i(T)) = F_i(w_i(T))$.
        \end{itemize} 
        
     \begin{prop}\label{prop:well-defined}
         This action is well-defined.
     \end{prop}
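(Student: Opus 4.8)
The plan is to define $E_i(T)$ and $F_i(T)$ by an explicit single-box modification and then check the two things ``well-defined'' demands of it. Concretely: if $w_i(T)$ contains an unmatched ``('', let $s$ be the box of $T$ carrying the leftmost such one and let $E_i(T)$ be the filling obtained from $T$ by replacing the entry $i+1$ of $s$ by $i$; symmetrically, if $w_i(T)$ contains an unmatched ``)'', let $t$ carry the rightmost such one and let $F_i(T)$ replace the entry $i$ of $t$ by $i+1$; otherwise the operator is undefined. Since this modification moves no box and does not change the parity of a box's coordinates, it preserves the shape $\mu\oslash\nu$ and the decomposition $T=(T_R,T_B)$, and --- the interleaving condition $\mu_B/\nu_B\ll\mu_R/\nu_R$ depending only on the shapes --- it preserves that too. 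So I would reduce the proposition to three points: (i) the one of $T_R,T_B$ that was altered is again semistandard; (ii) the $i$-reading word of $E_i(T)$ is $E_i(w_i(T))$ (and likewise for $F_i$); (iii) $E_i(T)$ (resp.\ $F_i(T)$) is the unique shuffle tableau of shape $\mu\oslash\nu$ realizing this word via a single $i+1\mapsto i$ (resp.\ $i\mapsto i+1$) change. I will argue (i)--(iii) for $E_i$; the $F_i$ case follows by the symmetry exchanging $i\leftrightarrow i+1$, left $\leftrightarrow$ right, and up $\leftrightarrow$ down.

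For (i): decrementing $s$ from $i+1$ to $i$ can only violate semistandardness through the box $\ell$ directly left of $s$ in its row (should it hold $i+1$) or the box $s^{\uparrow}$ directly above $s$ in its column (should it hold $i$); the boxes directly below and directly right of $s$ already hold at least $i+2$ and at least $i+1$, and the convention that squares outside the tableau carry $\pm\infty$ handles the boundary cases. The column case is immediate: $s^{\uparrow}<i+1$, and $s^{\uparrow}=i$ would make $(s^{\uparrow},s)$ an $(i,i+1)$-overlap, contradicting $s\in w_i(T)$; so $s^{\uparrow}\le i-1$. For the row case, suppose $\ell=i+1$. If $\ell$ survived into $w_i(T)$, it would be the letter immediately before $s$ there --- two consecutive ``('' with $s$ the leftmost \emph{unmatched} one --- and well-nestedness of the bracketing would then force $s$ to be matched, a contradiction; so $\ell$ must instead lie in an $(i,i+1)$-overlap, i.e.\ the box $\ell^{\uparrow}$ directly above $\ell$ holds $i$. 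Because $\ell^{\uparrow}$ is a genuine box and $\mu$ is a partition, $s^{\uparrow}$ is also a genuine box, lying in the same (weakly increasing) row of $T$ just right of $\ell^{\uparrow}$; hence $i=\ell^{\uparrow}\le s^{\uparrow}<i+1$, so $s^{\uparrow}=i$, again making $(s^{\uparrow},s)$ an overlap and contradicting $s\in w_i(T)$. Thus $\ell\le i$ and semistandardness survives.

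For (ii) and (iii): the single change at $s$ neither destroys an $(i,i+1)$-overlap ($s$ was in none) nor creates one (a new one would need the box below $s$ to hold $i+1$, but it holds $\ge i+2$), so the set of boxes feeding $w_i$ and their reading order are unchanged; hence $w_i(E_i(T))$ is $w_i(T)$ with exactly the letter at $s$'s position switched from $i+1$ to $i$, which is $E_i(w_i(T))$. And since any shuffle tableau of shape $\mu\oslash\nu$ obtained from $T$ by turning a single $i+1$ into an $i$ and having this reading word must have changed precisely the box carrying the leftmost unmatched ``('', the result is unique; the same applies to $F_i$. This proves the action is well-defined. The main obstacle will be step (i), and within it the ``forced overlap'' argument eliminating the possibility of an $i+1$ directly left of $s$ (and, dually, an $i$ directly right of $t$): the other verifications are routine once one tracks the $\pm\infty$ convention and the fact that $\mu,\nu$ are partitions.
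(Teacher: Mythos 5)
Your proposal is correct and follows essentially the same route as the paper: reduce well-definedness to showing that the box carrying the leftmost unmatched $i+1$ has no $i+1$ immediately to its left in its row (via the consecutive-parentheses/nesting argument, with the overlapped-neighbor subcase ruled out by forcing an $(i,i+1)$-overlap at $s$ itself) and no $i$ above it in its column. Your write-up is somewhat more explicit than the paper's --- in particular the case where the left neighbor is itself overlapped, and the verification that $w_i(E_iT)=E_i(w_iT)$ --- but the underlying argument is the same.
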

     
     \begin{proof}
        For $E_i(T)$ to be a valid shuffle tableaux, we need to check that the leftmost unmatched $i+1$ in $w_i(T)$ corresponds to the leftmost $(i+1)$ on some row in $T$. If this is not the case, then let $s$ be the square corresponding to the leftmost unmatched $i+1$ in $w_i(T)$, and let $s'$ be the square containing the leftmost $(i+1)$ on the same row as $s$. Since $s$ is not $(i,i+1)$-overlapped, $s'$ is also not $(i,i+1)$-overlapped. Hence, $s'$ is matched in $w_i(T)$. However, the squares from $s'$ to $s$ form a sequence on consecutive $(i+1)$'s in $w_i(T)$, so it is impossible for $s'$ to be matched when $s$ is unmatched. This proves that in $E_i(T)$, the entries are weakly increasing along the rows.

        Furthermore, since $E_i$ only affects non-$(i,i+1)$-overlapped $(i+1)$-edges, so in $E_i(T)$, the entries are still strictly increasing along the columns.

        The argument for $F_i$ is analogous.
    \end{proof}
    \begin{definition}[Temperley-Lieb crystals]\label{def:TLcyrs}
        A \textit{Temperley-Lieb crystal} is a connected component of the graph on shuffle tableaux formed by the crystal operators.
    \end{definition}

    In the remainder of this paper, we will prove that the tableaux in the same Temperley-Lieb crystal have the same Temperley-Lieb type, and that the Temperley-Lieb crystals satisfy type $A$ Stembridge's axioms.

    \begin{example}\label{exp:crystals}
        For $\mu = (4,3,3)$ and $\nu = (3,2,1)$, the Jacobi-Trudi matrix is
        \[ A_{\mu,\nu} = \left(\begin{matrix}
            h_1 & h_2 & h_3 \\
            1 & h_1 & h_2 \\
            1 & h_1 & h_2 \\
        \end{matrix}\right) . \]
        Let $I = J = \{1,3\}$, then
        \[ \Delta_{I,J}(A_{\mu,\nu})\cdot\Delta_{\Bar{I},\Bar{J}}(A_{\mu,\nu}) = \left(\begin{matrix}
            h_1 & h_3 \\
            1 & h_2 \\
        \end{matrix}\right) \cdot h_1 = s_{(3,1)} + s_{(2,2)} + s_{(2,1,1)}. \]
        In particular, let $\tau_1$ be
        \[ \resizebox{!}{0.1\textwidth}{
            \begin{tikzpicture}
                \draw (0,0) node[anchor=center] {$L_1$};
                \draw (0,-1) node[anchor=center] {$L_2$};
                \draw (0,-2) node[anchor=center] {$L_3$};
                \draw (2,0) node[anchor=center] {$R_1$};
                \draw (2,-1) node[anchor=center] {$R_2$};
                \draw (2,-2) node[anchor=center] {$R_3$};
                \filldraw[black] (0.5,0) circle (2pt);
                \filldraw[black] (0.5,-1) circle (2pt);
                \filldraw[black] (0.5,-2) circle (2pt);
                \filldraw[black] (1.5,0) circle (2pt);
                \filldraw[black] (1.5,-1) circle (2pt);
                \filldraw[black] (1.5,-2) circle (2pt);
                \uncross{1}{2}{2}
            \end{tikzpicture}
        }, \]
        then
        \[ \Imm^{\TL}_{\tau_1}(A_{\mu,\nu}) = s_{(3,1)} + s_{(2,1,1)}. \]
        Meanwhile, let $\tau_2$ be
        \[ \resizebox{!}{0.1\textwidth}{
            \begin{tikzpicture}
                \draw (0,0) node[anchor=center] {$L_1$};
                \draw (0,-1) node[anchor=center] {$L_2$};
                \draw (0,-2) node[anchor=center] {$L_3$};
                \draw (3,0) node[anchor=center] {$R_1$};
                \draw (3,-1) node[anchor=center] {$R_2$};
                \draw (3,-2) node[anchor=center] {$R_3$};
                \filldraw[black] (0.5,0) circle (2pt);
                \filldraw[black] (0.5,-1) circle (2pt);
                \filldraw[black] (0.5,-2) circle (2pt);
                \filldraw[black] (2.5,0) circle (2pt);
                \filldraw[black] (2.5,-1) circle (2pt);
                \filldraw[black] (2.5,-2) circle (2pt);
                \uncross{1}{2}{2}
                \uncross{2}{1}{2}
            \end{tikzpicture}
        }, \]
        then
        \[ \Imm^{\TL}_{\tau_2}(A_{\mu,\nu}) = s_{(2,2)}. \]
        
        This means that the crystal operators should group the shuffle tableaux of shape $\mu \oslash \nu$ into three crystals corresponding to the three Schur polynomials. In addition, all tableaux in the same crystal should have the correct Temperley-Lieb type, e.g. all tableaux in the crystal corresponding to $s_{(3,1,1)}$ should have type $\tau_1$. Indeed, the three crystals are shown in Figure \ref{fig:crystals}, in which the green arrows represent $F_1$, and the violet arrows represent $F_2$. The readers are encouraged to check that all tableaux in the same crystal have the correct Temperley-Lieb type.

        \begin{figure}[h!]
         \centering
            \begin{subfigure}[b]{0.4\textwidth}
                \centering
                \includegraphics[scale = 0.8]{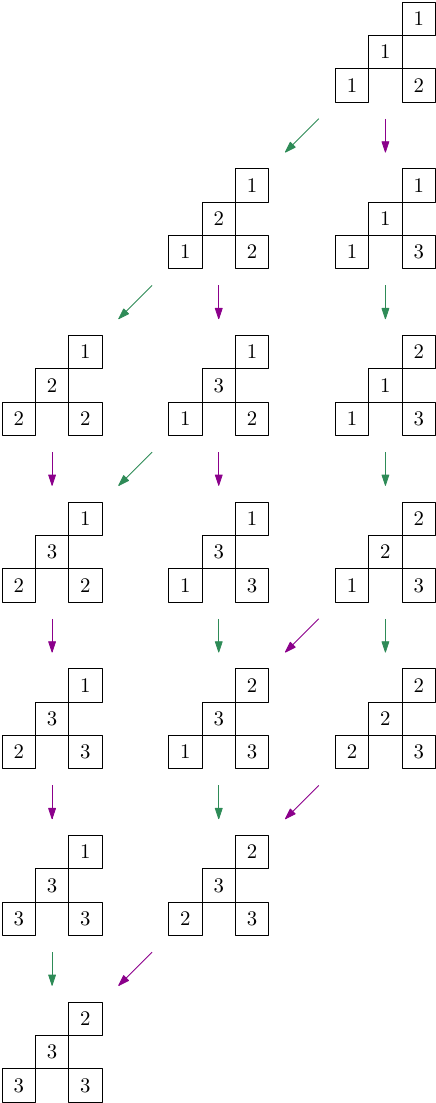}
                \caption{$s_{(3,1)}$}
                \label{subfig:s31}
            \end{subfigure}
         \quad
            \begin{subfigure}[b]{0.25\textwidth}
                \centering
                \includegraphics[scale = 0.8]{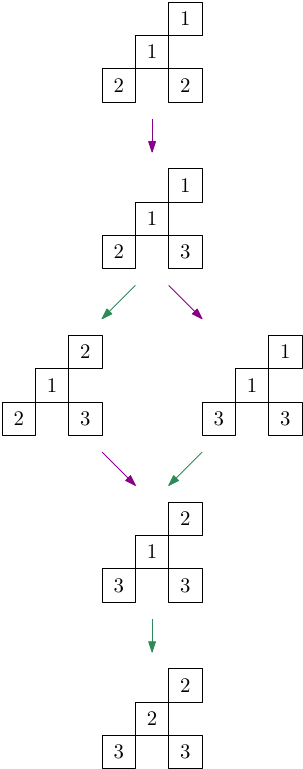}
                \caption{$s_{(2,2)}$}
                \label{subfig:s22}
            \end{subfigure}
         \quad
            \begin{subfigure}[b]{0.2\textwidth}
                \centering
                \includegraphics[scale = 0.8]{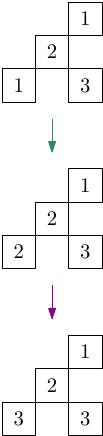}
                \caption{$s_{(2,1,1)}$}
                \label{subfig:s211}
            \end{subfigure}

            \caption{Three TL crystals and the corresponding generating functions}
            \label{fig:crystals}
        \end{figure}

        For example, we can quickly check the Temperley-Lieb type of the highest tableau of each crystal as follows
        \[\begin{tikzcd}[sep = tiny]
    	&& {R_1} & 1 & {L_1} \\
    	& {R_2} & 1 & {} & {L_2} \\
    	{R_3} & 1 & {} & 2 & {L_3}
    	\arrow[no head, from=1-3, to=2-3]
    	\arrow[no head, from=1-4, to=2-4]
    	\arrow[no head, from=1-4, to=1-5]
    	\arrow[no head, from=2-5, to=3-5]
    	\arrow[no head, from=3-2, to=2-2]
    	\arrow[no head, from=2-3, to=3-3]
    	\arrow[no head, from=3-1, to=3-2]
    	\arrow[no head, from=3-3, to=3-4]
    	\arrow[no head, from=3-4, to=2-4]
        \end{tikzcd},\]
        \[\begin{tikzcd}[sep = tiny]
    	&& {R_1} & 1 & {L_1} \\
    	& {R_2} & 1 & {} & {L_2} \\
    	{R_3} & 2 & {} & 2 & {L_3}
    	\arrow[no head, from=1-3, to=2-3]
    	\arrow[no head, from=1-4, to=2-4]
    	\arrow[no head, from=1-4, to=1-5]
    	\arrow[no head, from=2-5, to=3-5]
    	\arrow[no head, from=3-1, to=3-2]
    	\arrow[no head, from=3-3, to=3-4]
    	\arrow[no head, from=3-4, to=2-4]
    	\arrow[no head, from=3-2, to=3-3]
    	\arrow[no head, from=2-3, to=2-2]
        \end{tikzcd},\]
        \[\begin{tikzcd}[sep = tiny]
    	&& {R_1} & 1 & {L_1} \\
    	& {R_2} & 2 & {} & {L_2} \\
    	{R_3} & 1 & {} & 3 & {L_3}
    	\arrow[no head, from=1-4, to=1-5]
    	\arrow[no head, from=2-5, to=3-5]
    	\arrow[no head, from=3-1, to=3-2]
    	\arrow[no head, from=3-3, to=3-4]
    	\arrow[no head, from=3-4, to=2-4]
    	\arrow[no head, from=2-2, to=3-2]
    	\arrow[no head, from=2-3, to=3-3]
    	\arrow[no head, from=1-3, to=1-4]
    	\arrow[no head, from=2-3, to=2-4]
        \end{tikzcd}.\]
    \end{example}

    \begin{prop}\label{prop:same-type}
        $E_i$ and $F_i$ do not change the Temperley-Lieb type. That is,
        \[ \psi(T) = \psi(E_iT) = \psi(F_iT). \]
    \end{prop}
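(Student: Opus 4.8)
The plan is to localize the computation of $\psi(T)$ and then push through a case analysis driven by the bracketing procedure. Since $E_i$ and $F_i$ are mutually inverse wherever both are defined, it is enough to treat $F_i$: by construction it alters $T$ in a single cell $s$, raising its entry from $i$ to $i+1$. Recall that the Temperley--Lieb type is obtained from the local rule that assigns to each diagonally adjacent pair --- an entry $j$ sitting at a ``$\mathrm{NE}$'' square and an entry (or a boundary symbol $\pm\infty$) $i$ at the ``$\mathrm{SW}$'' square one row down and one column left of it --- one of the two non-crossing resolutions, according to whether $i\le j$ or $i>j$; stringing these resolutions together with the $L_k$'s and $R_k$'s yields the matching $\psi(T)$. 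The cell $s$ is a corner of at most two such pairs: it is the NE entry of the pair immediately below-left of it, and the SW entry of the pair immediately above-right of it. Hence only these two resolutions can change, and the rest of the diagram of $F_iT$ is identical to that of $T$.

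The first step is to determine precisely when either of these two resolutions flips. Let $t$ denote the entry (or boundary symbol) at the NE square above-right of $s$, and $u$ the one at the SW square below-left of $s$. The pair in which $s$ is the SW entry can change only if $\mathrm{val}(t)=i$, and the pair in which $s$ is the NE entry only if $\mathrm{val}(u)=i+1$; by the $\pm\infty$ conventions, if the relevant neighbour is a boundary square then no flip occurs. The next step is to exploit the hypothesis that $s$ realizes the rightmost unmatched $i$ of $w_i(T)$. As in the proof of Proposition~\ref{prop:well-defined}, this forces $s$ to be the rightmost $i$ in its own row; since the bracket to the left of $s$ that would have matched it is absent, this pins down the pattern of $i$'s and $(i{+}1)$'s in the rows just above and below $s$. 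Translating this back into the tableau rules out, in most cases, the offending values $\mathrm{val}(t)=i$ and $\mathrm{val}(u)=i+1$, and in the remaining cases constrains how the corresponding squares sit relative to $s$ and to the holes of the tableau.

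It then remains to show that in every surviving case a flipped resolution does not change the boundary matching. Following the strategy of the proof of Proposition~\ref{prop:correct-type}, I would argue via Remark~\ref{rem:cycle} and Remark~\ref{rem:loop-and-order} that whenever a brick is reconnected, the cells on that brick already lie on loops of the diagram; reconnecting it then only reshuffles loops and leaves the matching of $L_1,\dots,L_n,R_1,\dots,R_n$ intact. The crux of the argument --- and the step I expect to be the main obstacle --- is exactly this: verifying that every resolution that the single entry-change can flip is ``inessential'', i.e. confined to a loop. This forces one through a careful, somewhat lengthy enumeration of the local picture around $s$ (its diagonal neighbours, the surrounding holes, and any nearby boundary squares), matching each configuration allowed by the bracketing constraints to a loop of the diagram. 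Running the same analysis in reverse disposes of $E_i$, completing the proof.
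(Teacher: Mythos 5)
Your setup coincides with the paper's: reduce to one operator, observe that changing the single cell $s$ can only flip the two diagonal resolutions in which $s$ participates, note that a flip requires the diagonal neighbour to equal $i$ (resp.\ $i+1$), and use the fact that $s$ carries the extremal unmatched letter of $w_i(T)$, hence is extremal in its row, to constrain the surrounding configuration. But the proof stops exactly where the actual work begins. The content of the paper's argument is the quantitative part you defer: counting the non-overlapped $i$-, $(i+1)$- and $(i+2)$-squares in the three rows around $s$ and playing these counts against the unmatchedness of $s$ in $w_i(T)$ to force a specific local picture (a chain of bricks extending from $s$ along rows $r-1,r,r+1$). Writing ``this forces one through a careful, somewhat lengthy enumeration'' and flagging it as ``the main obstacle'' acknowledges the gap without closing it; as written, the proposal is a plan, not a proof.

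There is also a substantive inaccuracy in the one mechanism you do propose for the final step. You intend to show, via Remark~\ref{rem:cycle}, that every flippable brick lies on a loop, so that the flip only reshuffles loops. That covers only part of the paper's analysis: Remark~\ref{rem:cycle} disposes of the sub-case where the \emph{other} diagonal neighbour forces a weak inversion, and one of the two remaining main cases does end with the affected cell on a loop. But in the other main case the two arcs being reconnected do \emph{not} lie on a loop; the forced configuration shows they lie on the \emph{same strand} of the diagram, so that flipping the resolution merely splits off or absorbs a closed loop while leaving the endpoints of that strand, and hence the matching $\psi(T)$, unchanged. Without this second mechanism your intended conclusion (``everything happens on loops'') is false in general, and the enumeration you postpone could not be completed along the lines you describe.
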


    \begin{proof}
        We will prove the statement for $E_i$. The statement for $F_i$ follows.

        Suppose $E_i$ acts on $T$ by changing square $(r,c)$ from $i+1$ to $i$. The diagram of $T$ only changes if either $(r+1,c-1)$ contains $i+1$ or $(r-1, c+1)$ contains $i$. We consider the case where $(r+1,c-1)$ contains $i+1$. The argument for when $(r-1,c+1)$ contains $i$ is dual to this case. In this case, the diagram is changed from
        \[ \begin{tikzcd}[sep = tiny]
    	{} & i+1 \\
    	i+1 & {}
    	\arrow[no head, from=2-1, to=1-1]
    	\arrow[no head, from=1-2, to=2-2]
        \end{tikzcd} \]
        to
        \[ \begin{tikzcd}[sep = tiny]
    	{} & i \\
    	i+1 & {}
    	\arrow[no head, from=2-1, to=2-2]
    	\arrow[no head, from=1-2, to=1-1]
        \end{tikzcd}. \]
        Recall from Remark \ref{rem:cycle} that if $(r+1,c+1)$ contains $i+1$, then both $(r+1,c+1)$ and $(r,c)$ belong to some loop in the diagram of $T$, so this change does not affect the Temperley-Lieb type. Thus, we only need to consider the case where $(r+1,c+1)$ contains some $x > i+1$. In addition, since $(r+1,c-1)$ contains $i+1$, $(r-1,c-1)$ is at most $i$. We have two cases.

        \textit{Case 1:} $(r-1,c-1)$ contains $i$. Let $a$ be the number of non-$(i,i+1)$-overlapped $i$-squares on row $r-1$. Observe that $(r-1,c-1)$ is $(i,i+1)$-overlapped, but $(r-1,c+1)$ is not $(i,i+1)$-overlapped because $(r+1,c+1)$ contains some $x > i+1$. Hence, the non-$(i,i+1)$-overlapped $i$'s on row $r-1$ are in squares $(r-1,c+1),(r-1,c+3),\ldots,(r-1,c+2a-1)$, and square $(r-1,c+2a+1)$ contains some $z \geq i+1$. In addition, square $(r+1,c+2j-1)$ contains some $x_j > i+1$ for $1\leq j \leq a$.

        On the other hand, $E_i$ acts on $T$ at $(r,c)$, so by Proposition \ref{prop:well-defined}, $(r,c)$ is the leftmost $(i+1)$-square on row $r$ in $T$. Hence, square $(r,c-2)$ contains some $y \leq i$. Furthermore, $(r,c)$ contains an unmatched $i+1$ in $w_i(T)$, so on row $r$, there are at least $a+1$ non-$(i,i+1)$-overlapped $(i+1)$-squares in squares $(r,c),(r,c+2),\ldots,(r,c+2a)$. These $(i+1)$'s are not $(i,i+1)$-overlapped, so square $(r-2,c+2j-2)$ has to contain some $w_j < i$ for $1\leq j \leq a+1$.

        In summary, we have the following configuration, where $(r,c)$ is colored red,
        \[\begin{tikzcd}[sep=tiny]
    	& {} & {w_1} & {} & {w_2} & {} & \ldots & {} & {w_{a+1}} \\
    	{} & i & {} & i & {} & i & \ldots & i & {} & z \\
    	y & {} & {\textcolor{red}{i+1}} & {} & {i+1} & {} & \ldots & {} & {i+1} & {} \\
    	& {i+1} & {} & x & {} & {x_2} & \ldots & {x_a} & {}
    	\arrow[no head, from=3-3, to=4-3]
    	\arrow[no head, from=3-1, to=2-1]
    	\arrow[no head, from=2-2, to=2-3]
    	\arrow[no head, from=2-4, to=2-5]
    	\arrow[no head, from=2-6, to=2-7]
    	\arrow[no head, from=2-6, to=2-5]
    	\arrow[no head, from=2-3, to=2-4]
    	\arrow[no head, from=3-3, to=3-4]
    	\arrow[no head, from=3-4, to=3-5]
    	\arrow[no head, from=1-3, to=1-2]
    	\arrow[no head, from=1-5, to=1-4]
    	\arrow[no head, from=4-4, to=4-5]
    	\arrow[no head, from=4-6, to=4-7]
    	\arrow[no head, from=4-2, to=3-2]
    	\arrow[no head, from=3-2, to=2-2]
    	\arrow[no head, from=2-8, to=2-9]
    	\arrow[no head, from=1-9, to=1-8]
    	\arrow[no head, from=2-7, to=2-8]
    	\arrow[no head, from=3-9, to=2-9]
    	\arrow[no head, from=2-10, to=3-10]
    	\arrow[no head, from=3-9, to=3-8]
    	\arrow[no head, from=4-8, to=4-9]
    	\arrow[no head, from=3-8, to=3-7]
    	\arrow[no head, from=3-5, to=3-6]
    	\arrow[no head, from=3-6, to=3-7]
    	\arrow[no head, from=1-7, to=1-6]
        \end{tikzcd}\]
        Hence, in the diagram, $(r,c)$ and $(r+1,c-1)$ belong to the same strand, so $E_i(T)$ does not change the Temperley-Lieb type.

        \textit{Case 2:} $(r-1,c-1)$ contains $w_1 < i$ (potentially $w_1 = -\infty$). Let $a$ be the number of non-$(i,i+1)$-overlapped $(i+1)$-squares on row $r+1$. $(r-1,c-1)$ containing $w_1 < i$ means that $(r+1,c-1)$ contains a non-$(i,i+1)$-overlapped $(i+1)$. Recall also that we are in the case where $(r+1,c+1)$ contains some $x > i+1$. Thus, $(r+1,c-1)$ is the rightmost $(i+1)$-square on row $r+1$. Hence, $(r+1,c-1)$ being non-$(i,i+1)$-overlapped means that all $(i+1)$-squares on row $r+1$ are non-$(i,i+1)$-overlapped. Hence, the $(i+1)$-squares on row $r+1$ are in squares $(r+1,c-1),(r+1,c-3),\ldots,(r+1,c-2a+1)$, and square $(r+1,c-2a-1)$ contains some $z \leq i$. In addition, square $(r-1,c-2j+1)$ contains some $w_j < i$ for $1\leq j \leq a$.

        On the other hand, $(r,c)$ contains the leftmost unmatched $i+1$ in $w_i(T)$, so on row $r$, there are at least $a$ non-$(i,i+1)$-overlapped $i$-squares in squares $(r,c-2),\ldots,(r,c-2a)$. These $i$'s are non-$(i,i+1)$-overlapped, so square $(r+2,c-2j)$ contains some $x_j > i+1$ for $1\leq j\leq a$.

        In summary, we have the following configuration, where $(r,c)$ is colored red,
        \[\begin{tikzcd}[sep=tiny]
    	& {} & {w_a} & {} & \ldots & {} & {w_2} & {} & {w_1} \\
    	{} & i & {} & i & \ldots & i & {} & i & {} & {\textcolor{red}{i+1}} \\
    	z & {} & {i+1} & {} & \ldots & {} & {i+1} & {} & {i+1} & {} \\
    	& {x_a} & {} & {x_{a-1}} & \ldots & {x_2} & {} & {x_1} & {}
    	\arrow[no head, from=1-3, to=1-2]
    	\arrow[no head, from=2-2, to=2-3]
    	\arrow[no head, from=2-4, to=2-5]
    	\arrow[no head, from=1-5, to=1-4]
    	\arrow[no head, from=2-6, to=2-7]
    	\arrow[no head, from=1-7, to=1-6]
    	\arrow[no head, from=2-8, to=2-9]
    	\arrow[no head, from=1-9, to=1-8]
    	\arrow[no head, from=2-9, to=3-9]
    	\arrow[no head, from=2-10, to=3-10]
    	\arrow[no head, from=4-8, to=4-9]
    	\arrow[no head, from=3-9, to=3-8]
    	\arrow[no head, from=4-6, to=4-7]
    	\arrow[no head, from=3-7, to=3-6]
    	\arrow[no head, from=4-4, to=4-5]
    	\arrow[no head, from=3-5, to=3-4]
    	\arrow[no head, from=4-2, to=4-3]
    	\arrow[no head, from=3-3, to=3-2]
    	\arrow[no head, from=3-1, to=2-1]
    	\arrow[no head, from=2-2, to=3-2]
    	\arrow[no head, from=2-3, to=2-4]
    	\arrow[no head, from=3-3, to=3-4]
    	\arrow[no head, from=2-5, to=2-6]
    	\arrow[no head, from=3-5, to=3-6]
    	\arrow[no head, from=2-7, to=2-8]
    	\arrow[no head, from=3-7, to=3-8]
        \end{tikzcd}\]
        Hence, square $(r+1,c-1)$ belongs to a loop, so $E_i(T)$ does not change the Temperley-Lieb type.
    \end{proof}

\section{Stembridge's axioms}\label{sec:axiom}

    In this section, we will prove the following theorem.

    \begin{thm}\label{thm:typeA-crystal}
        Temperley-Lieb crystals are type A Kashiwara crystals.
    \end{thm}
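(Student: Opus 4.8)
The plan is to apply Stembridge's local characterization of simply-laced crystals \cite{stembridge2003local}: a graph whose edges are colored by the nodes $i=1,2,\dots$ of a type $A$ Dynkin diagram, together with a weight function, is a disjoint union of highest-weight crystals (hence a type $A$ Kashiwara crystal) as soon as a short list of \emph{local} axioms holds — one family concerning a single node $i$, one family concerning a non-adjacent pair $\{i,j\}$, and one family concerning an adjacent pair $\{i,i+1\}$. Accordingly the proof of Theorem~\ref{thm:typeA-crystal} splits into three parts: installing the combinatorial data and checking the single-node axioms; checking the pair axioms when $|i-j|\ge 2$; and checking the pair axioms when $|i-j|=1$.

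For the first part, set $\mathrm{wt}(T)$ to be the content of $T$ (the vector of value-multiplicities, ignoring colors), so that $\langle\mathrm{wt}(T),\alpha_i^\vee\rangle=\#_i(T)-\#_{i+1}(T)$, and let $\varepsilon_i(T)$ (resp.\ $\varphi_i(T)$) be the number of unbracketed $i{+}1$'s (resp.\ unbracketed $i$'s) after the bracketing of $w_i(T)$ from Definition~\ref{def:crys-ops}. The square changed by $E_i$ or $F_i$ is, by construction, not $(i,i{+}1)$-overlapped, and its column-neighbors in $T$ are (by strict column-increase) too small above and too large below to produce a new $(i,i{+}1)$-overlap; hence the move creates and destroys no $(i,i{+}1)$-overlap and changes no other square's overlap status, so $w_i(E_iT)=E_i(w_i(T))$ and $w_i(F_iT)=F_i(w_i(T))$. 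Together with Proposition~\ref{prop:well-defined} this shows that $E_i$ and $F_i$ are mutually inverse where defined, that every $i$-string is a finite interval, and that $E_i$ raises the weight by $\alpha_i$ while $F_i$ lowers it. Finally, since deleting $(i,i{+}1)$-overlap pairs removes equally many $i$'s and $i{+}1$'s, the $i$-content of $w_i(T)$ minus its $i{+}1$-content equals $\#_i(T)-\#_{i+1}(T)$, whence $\varphi_i-\varepsilon_i=\langle\mathrm{wt},\alpha_i^\vee\rangle$. This settles every axiom that does not involve two distinct nodes.

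For the second part, suppose $|i-j|\ge 2$. Then the value sets $\{i,i{+}1\}$ and $\{j,j{+}1\}$ are disjoint, so an application of $E_i$ or $F_i$ changes no square that is read in $w_j(T)$ and alters no $(j,j{+}1)$-overlap; hence $w_j(T)$ is literally unchanged, $E_i$ and $E_j$ commute, and $\varepsilon_j,\varphi_j$ are unaffected by $E_i$. All Stembridge axioms for a non-adjacent pair therefore hold for this trivial reason.

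The third part is the crux. When $j=i+1$, the words $w_i(T)$ and $w_{i+1}(T)$ share their $i{+}1$-entries, and — unlike the previous cases — applying $E_i$ or $F_{i+1}$ can move an $i{+}1$-square \emph{into} or \emph{out of} an $(i,i{+}1)$-overlap (equivalently, free an adjacent $i{+}2$-square into $w_{i+1}$ or an adjacent $i$-square into $w_i$), so the two bracketings genuinely interact and must be analyzed together. What remains is a case analysis, organized by the rigid shape constraints of a shuffle tableau (Lemma~\ref{lem:shuffle-props}) and by the loop structure of its Temperley–Lieb diagram (Remark~\ref{rem:cycle}), verifying: the sign conditions $\varepsilon_{i+1}(T)-\varepsilon_{i+1}(E_iT)\in\{0,-1\}$ and $\varepsilon_{i}(T)-\varepsilon_{i}(E_{i+1}T)\in\{0,-1\}$; the commuting-square axiom when both of these vanish, i.e.\ $E_iE_{i+1}T=E_{i+1}E_iT$ together with the accompanying $\varepsilon$-bookkeeping; and the higher axiom when both equal $-1$, i.e.\ $E_iE_{i+1}^2E_iT=E_{i+1}E_i^2E_{i+1}T$ together with the statements that every intermediate operator along both length-four paths is defined and that the $\varepsilon$-values agree step by step. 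Tracking such an alternating word of $E_i$'s and $E_{i+1}$'s through the interleaved placement of the red and blue subtableaux and the overlap-deletion rule is the main obstacle, and occupies the rest of Section~\ref{sec:axiom}; the argument in the proof of Proposition~\ref{prop:same-type} — localizing a crystal move and analyzing how it interacts with neighboring squares and with loops — is the prototype for these verifications.
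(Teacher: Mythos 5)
Your overall strategy is exactly the paper's: equip the graph on shuffle tableaux with the content weight and the bracketing statistics, invoke Stembridge's local characterization, dispose of the single-node axioms and the non-adjacent pairs (where $w_j$ is literally untouched by $E_i$), and reduce everything to the adjacent case $j=i\pm 1$. Your first two parts are essentially complete and agree with Lemma \ref{lem:P1-P2} and Proposition \ref{prop:well-defined}; your observation that strict column-increase prevents a crystal move from creating or destroying an $(i,i+1)$-overlap is the right justification for the well-definedness of the induced tableau action.

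The gap is that your third part is a statement of what must be proved rather than a proof of it. For $j=i+1$ the two bracketings interact precisely because changing an $i+1$ to an $i$ can free an overlapped $i+2$ into $w_{i+1}$ (or conversely consume an $i$ from $w_i$), and verifying (P3)--(P6) requires controlling \emph{where} in the reading word the freed or removed letter lands relative to the leftmost unmatched letters. This is not routine: the paper's proofs of Lemma \ref{lem:P5} and especially Lemma \ref{lem:P6} hinge on quantitative counting arguments (comparing the numbers $a,b,c$ of non-overlapped squares on three consecutive rows to derive contradictions such as $a+1\le b\le c\le a$) and on a multi-level case split (whether $s$ is $(i+1,i+2)$-overlapped, whether $t$ lies left or right of $s$ and of $u$, whether the square above $t$ contains an $i$, etc.), culminating in the identification of the square $p$ at which $E_i$ acts on $E_{i+1}^2E_i(x)$ and the separate verification of $\nabla_i\eps(y,j)=\nabla_j\eps(y,i)=-1$. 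None of these steps is supplied or even sketched in your proposal, and the analogy you draw with Proposition \ref{prop:same-type} does not substitute for them, since that proposition tracks the Temperley--Lieb type of a single move rather than the interaction of two bracketings along a length-four word of operators. As it stands, the proposal establishes the easy half of the theorem and defers the entire substance of Lemmas \ref{lem:P5}, \ref{lem:P6}, and \ref{lem:P5'6'}.
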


    In \cite{stembridge2003local} Stembridge has characterized simply-laced crystals in terms of local axioms. In other words, he formulated several conditions checking which proves that the graph is a Kashiwara crystal of a particular simply-laced Kac-Moody type. 
    We prove Theorem \ref{thm:typeA-crystal} by verifying Stembridge's axioms for TL crystals in Lemmas \ref{lem:P1-P2}, \ref{lem:P5}, \ref{lem:P6}, and \ref{lem:P5'6'}.

    Now we recall Stembridge's axioms in \cite{stembridge2003local}. Let $I$ be a finite index set an $A = [a_{ij}]_{i,j\in I}$ be the Cartan matrix of a simply-laced Kac-Moody algebra, i.e. $a_{ii} = 2$, and $a_{ij} = a_{ji} \in \{0,-1\}$ for all $i\neq j$. In our case, we are working with type $A$, so $I = [n]$, $a_{ij} = a_{ji} = -1$ if $|i-j| = 1$, and $a_{ij} = a_{ji} = 0$ if $|i-j| \geq 2$.

    Consider a directed graph $X$ whose edges are colored by labels from $I$. $X$ is $A$-regular if it satisfies the requirements (P1) - (P6) and (P5'), (P6') below.

    \begin{itemize}
        \item[(P1)] All monochromatic directed paths in $X$ have finite length. In particular $X$ has no monochromatic circuits.
        \item[(P2)] For every vertex $x\in X$ and $i\in I$, there is at most one edge $y \xrightarrow{i} x$ and at most one edge $x\xrightarrow{i} z$.
    \end{itemize}

    By (P2), we can define $y = E_i(x)$ if there is an edge $y \xrightarrow{i} x$, and $z = F_i(x)$ is there is an edge $x \xrightarrow{i} z$. We also define the $i$-string through $x$ to be the maximal path
    \[ F_i^{-d}(x) \rightarrow \ldots \rightarrow F_i^{-1}(x) \rightarrow x \rightarrow F_i(x) \rightarrow \ldots \rightarrow F_i^r(x). \]
    By (P1), we have $0 \leq r,d < \infty$. We have the following notations.
    \begin{align*}
        \delta(x,i) = -d,\quad\quad\quad\quad &\eps(x,i) := r, \\
        \Delta_i\delta(x,j) = \delta(E_ix,j) - \delta(x,j), \quad\quad\quad\quad & \Delta_i\eps(x,j) = \eps(E_ix,j) - \eps(x,j), \\
        \nabla_i\delta(x,j) = \delta(x,j) - \delta(F_ix,j), \quad\quad\quad\quad & \nabla_i\eps(x,j) = \eps(x,j) - \eps(F_ix,j).
    \end{align*}

    \begin{example}\label{exp:deltaesp}

        For example, let $x$ be the red tableau in Figure \ref{fig:deltaesp}, then the $1$-string of $x$ is the path of green arrows going through $x$. This means that $\delta(x,1) = -2$ and $\eps(x,1) = 3$.

        $E_2(x)$ is the tableau to the left of $x$, so we have $\delta(E_2x,1) = -3$. This means that $\Delta_2\delta(x,1) = \delta(E_2x,1) - \delta(x,1) = (-3) - (-2) = -1$. On the other hand, $\eps(E_2x,1) = 3$, so $\Delta_2\eps(x,1) = \eps(E_2x,1)- \eps(x,1) = 3 - 3 = 0$.

        Similarly, $F_2(x)$ is the tableau to the right of $x$, so we have $\delta(F_2x,1) = -1$. Hence, $\nabla_2\delta(x,1) = \delta(x,1) - \delta(F_2x,1) = (-2) - (-1) = -1$. On the other hand, $\eps(F_2x,1) = 3$, so $\nabla_2\eps(x,1) = \eps(x,1)- \eps(F_2x,1) = 3 - 3 = 0$.
        
        \begin{figure}[h!]
            \centering
            \includegraphics[scale = 0.75]{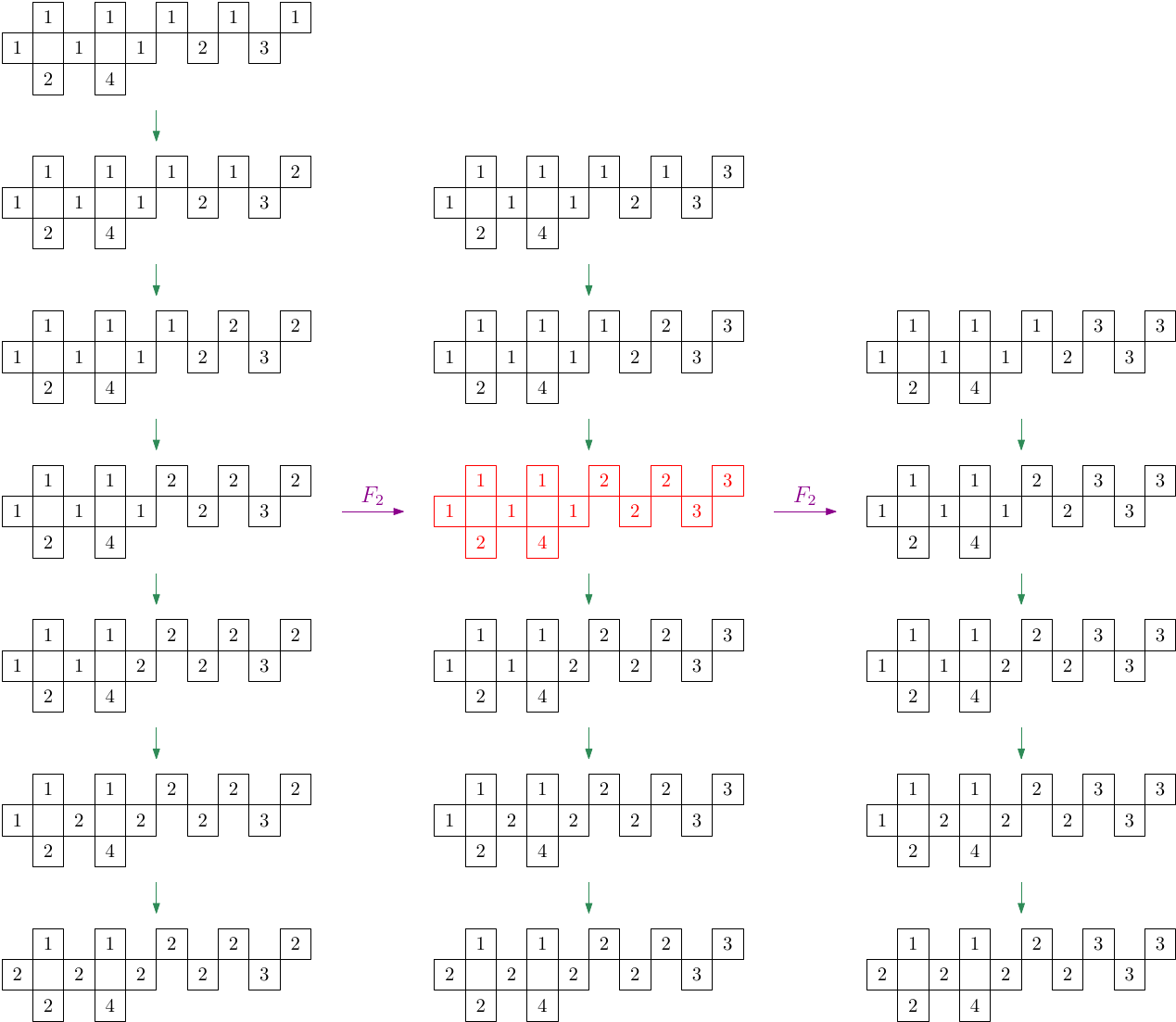}
            \caption{$1$-strings}
            \label{fig:deltaesp}
        \end{figure}
    \end{example}
    Note that that notations that involve $E_i$ (resp. $F_i$) are defined only when $E_ix$ (resp. $F_ix$) is defined. Now for any $x\in X$ and $i\neq j\in I$, when $E_ix$ is defined, we require the following two requirements.
    \begin{itemize}
        \item[(P3)] $\Delta_i\delta(x,j) + \Delta_i\eps(x,j) = a_{ij}$.
        \item[(P4)] $\Delta_i\delta(x,j) \leq 0, \Delta_i\eps(x,j) \leq 0$.
    \end{itemize}
    \begin{example}\label{exp:P3,4}
        Recall from Example \ref{exp:deltaesp} that $\Delta_2\delta(x,1) = \nabla_2\delta(x,1) = -1$, and $\Delta_2\eps(x,1) = \nabla_2\eps(x,1) = 0$, where $x$ is the red tableau in Figure \ref{fig:deltaesp}. Also, $\Delta_2\delta(x,1) + \Delta_2\eps(x,1) = -1$.
    \end{example}
    When $E_ix$ and $E_jx$ are both defined, we have the following two requirements.
    \begin{itemize}
        \item[(P5)] $\Delta_i\delta(x,j) = 0$ implies $y := E_iE_jx = E_jE_ix$, and $\nabla_j\eps(y,i) = 0$.
        \item[(P6)] $\Delta_i\delta(x,j) = \Delta_j\delta(x,i) = -1$ implies $y:= E_iE_j^2E_ix = E_jE_i^2E_jx$, and $\nabla_i\eps(y,j) = \nabla_j\eps(y,i) = -1$
    \end{itemize}
    Dually, when $F_ix$ and $F_jx$ are both defined, we have the following two requirements.
    \begin{itemize}
        \item[(P5')] $\nabla_i\eps(x,j) = 0$ implies $y := F_iF_jx = F_jF_ix$, and $\Delta_j\delta(y,i) = 0$.
        \item[(P6')] $\nabla_i\eps(x,j) = \nabla_j\eps(x,i) = -1$ implies $y:= F_iF_j^2F_ix = F_jF_i^2F_jx$, and $\Delta_i\delta(y,j) = \Delta_j\delta(y,i) = -1$
    \end{itemize}

    \begin{lemma}\label{lem:P1-P2}
        The crystal operators defined in Section \ref{sec:crys} satisfy axioms (P1) - (P4).
    \end{lemma}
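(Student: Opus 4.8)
The plan is to verify (P1)--(P4) by pushing everything down to the level of the reading words $w_j(T)$, using throughout that by Definition~\ref{def:crys-ops} and Proposition~\ref{prop:well-defined} each of $F_i$ and $E_i$ alters a shuffle tableau in a single square (an $i$ becoming an $i+1$, respectively an $i+1$ becoming an $i$), and that such a one-square change perturbs every word $w_j(T)$ in a very restricted way. Axiom~(P1) is then immediate: applying $F_i$ strictly decreases the number of entries equal to $i$, a nonnegative integer bounded by the number of cells of $\mu\oslash\nu$, so every monochromatic $i$-path has length at most the number of $i$'s in its source tableau, and in particular there are no monochromatic circuits. For~(P2), Proposition~\ref{prop:well-defined} already says $E_i$ and $F_i$ are partial functions, so each vertex has at most one outgoing $i$-edge; the remaining point, at most one incoming $i$-edge, amounts to injectivity of $F_i$, which I would obtain by checking that $F_i$ and $E_i$ are mutually inverse. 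Here the key observation is that the square $s$ that $F_i$ turns from $i$ into $i+1$ is not $(i,i+1)$-overlapped, the cell directly above $s$ holds an entry $\le i-1$ and the cell directly below holds an entry $\ge i+2$ (both by column-strictness); hence $s$ is still not $(i,i+1)$-overlapped after the move and no other cell changes its overlap status, so $w_i(F_iT)$ is gotten from $w_i(T)$ by applying the usual word operator $F_i$ at the corresponding letter, which the word operator $E_i$ undoes. This gives $E_iF_iT=T$ and dually $F_iE_iT=T$, so $F_i$ is injective and (P2) holds.

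For (P3) and (P4) I would first isolate a content identity. When passing from $T$ to $w_j(T)$, each $(j,j+1)$-overlap pair removes exactly one entry $j$ together with one entry $j+1$, so the number of $j$'s minus the number of $(j+1)$'s in $w_j(T)$ equals $c_j(T)-c_{j+1}(T)$, where $c_k(T)$ is the number of cells of $T$ with entry $k$. Since in a balanced bracket word the numbers of matched openings and closings agree, this yields
\[ \eps(T,j)+\delta(T,j)=c_j(T)-c_{j+1}(T), \]
a quantity depending only on the content of $T$. As $E_i$ changes the content by replacing one $i+1$ with one $i$, it follows immediately that $\Delta_i\delta(x,j)+\Delta_i\eps(x,j)$ is the difference of $c_j-c_{j+1}$ evaluated at $E_ix$ and at $x$, and a one-line computation shows this difference is $-1$ when $|i-j|=1$ and $0$ when $|i-j|\ge 2$, i.e.\ it is exactly $a_{ij}$; this proves~(P3).

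It then remains to prove (P4), namely $\Delta_i\eps(x,j)\le 0$ and $\Delta_i\delta(x,j)\le 0$. When $|i-j|\ge 2$, applying $E_i$ changes neither the cells with entry $j$ or $j+1$ nor the $(j,j+1)$-overlap relation among them, so $w_j(E_ix)=w_j(x)$ and both quantities vanish. When $|i-j|=1$ the plan is to track the single square $u$ that $E_i$ changes from $i+1$ to $i$: using column-strictness for the cells immediately above and below $u$ and the fact that $u$ is not $(i,i+1)$-overlapped, one checks case by case (distinguishing $j=i-1$ from $j=i+1$, and within each whether $u$ was or becomes part of a $(j,j+1)$-overlap) that $w_j(E_ix)$ arises from $w_j(x)$ either by deleting a single entry $j$ or by inserting a single entry $j+1$. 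Deleting one closing bracket ($=j$), or inserting one opening bracket ($=j+1$), into a bracket word never increases the number of unmatched $j$'s and never decreases the number of unmatched $(j+1)$'s; since these counts are $\eps(\cdot,j)$ and $-\delta(\cdot,j)$ respectively, this gives the two desired inequalities. I expect this last case analysis --- the bookkeeping of the $(j,j+1)$-overlap structure of the tableau as $E_i$ is applied --- to be the main obstacle; the rest is essentially a transcription of standard facts about bracketing operators on words.
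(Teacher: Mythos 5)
Your proof is correct and follows essentially the same route as the paper's: the heart of both arguments is the observation that $E_i$ alters $w_{i\pm 1}$ either by deleting a single closing bracket or by inserting a single opening bracket, according to whether the changed square is overlapped. Your packaging of (P3) via the content identity and of (P4) via the monotonicity of bracket matchings under such a deletion/insertion is a clean substitute for the paper's explicit four-case computation of $(\Delta_i\delta,\Delta_i\eps)$, and your verification of (P1)--(P2) via $E_iF_i=F_iE_i=\mathrm{id}$ fills in details the paper leaves to the reader.
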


    \begin{proof}
        (P1) and (P2) follow directly from the definition. In particular, $\delta(x,i)$ (resp. $\eps(x,i)$) is the number of unmatched $i+1$ (resp. $i$) in $w_i(x)$.

        (P3) and (P4) also follow from the definition when $|i-j|>1$. If $j = i+1$, then $E_i(x)$ turns an $(i+1)$ in some square $s$ to an $i$.
        
        If $s$ is $(i+1,i+2)$-overlapped in $x$, then the $(i+2)$-square that overlaps with $s$ in $x$ is not $(i+1,i+2)$-overlapped in $E_i(x)$. Thus, $w_{i+1}(E_i(x))$ is $w_{i+1}(x)$ with an extra $i+2$. If this new $i+2$ creates another pair of parentheses in $w_{i+1}(E_i(x))$, then $\Delta_i\eps(x,i+1) = -1$ and $\Delta_i\delta(x,i+1) = 0$ (see Figure \ref{subfig:P34overlapa}). Else, $\Delta_i\eps(x,i+1) = 0$ and $\Delta_i\delta(x,i+1) = -1$ (see Figure \ref{subfig:P34overlapb}). In both cases, (P3) and (P4) are satisfied.

        \begin{figure}[h!]
         \centering
            \begin{subfigure}[b]{\textwidth}
                \centering
                \includegraphics[scale = 2]{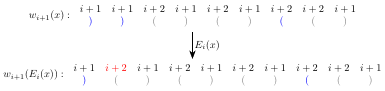}
                \caption{Extra $i+2$ (in red) creates a new pair of parentheses}
                \label{subfig:P34overlapa}
            \end{subfigure}
            
         \vspace{2em}
         
            \begin{subfigure}[b]{\textwidth}
                \centering
                \includegraphics[scale = 2]{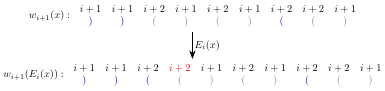}
                \caption{Extra $i+2$ (in red) does not create a new pair of parentheses}
                \label{subfig:P34overlapb}
            \end{subfigure}

            \caption{Possible scenarios with $w_{i+1}$ when $s$ is $(i+1,i+2)$-overlapped}
            \label{fig:P34overlap}
        \end{figure}

        If $s$ is not $(i+1,i+2)$-overlapped in $x$, then $s$ contains an $i+1$ in $w_{i+1}(x)$ but not in $w_{i+1}(E_i(x))$. Thus, $w_{i+1}(E_i(x))$ is $w_{i+1}(x)$ with one of the $i+1$'s removed. If this removal breaks up a pair of parentheses, then $\Delta_i\eps(x,i+1) = 0$ and $\Delta_i\delta(x,i+1) = -1$ (see Figure \ref{subfig:P34nonoverlapa}). Else, $\Delta_i\eps(x,i+1) = -1$ and $\Delta_i\delta(x,i+1) = 0$ (see Figure \ref{subfig:P34nonoverlapb}). In both cases, (P3) and (P4) are satisfied.

        \begin{figure}[h!]
         \centering
            \begin{subfigure}[b]{\textwidth}
                \centering
                \includegraphics[scale = 2]{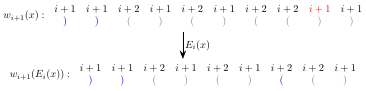}
                \caption{Removing $s$ (in red) breaks up a pair of parentheses}
                \label{subfig:P34nonoverlapa}
            \end{subfigure}
            
         \vspace{2em}
         
            \begin{subfigure}[b]{\textwidth}
                \centering
                \includegraphics[scale = 2]{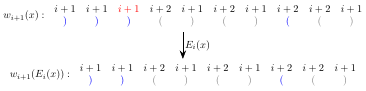}
                \caption{Removing $s$ (in red) does not break up a pair of parentheses}
                \label{subfig:P34nonoverlapb}
            \end{subfigure}

            \caption{Possible scenarios with $w_{i+1}$ when $s$ is not $(i+1,i+2)$-overlapped}
            \label{fig:P34nonoverlap}
        \end{figure}

        The case where $j = i-1$ is analogous.
    \end{proof}

    \begin{lemma}\label{lem:P5}
        The crystal operators defined in Section \ref{sec:crys} satisfy axioms (P5):
        \[ \Delta_i\delta(x,j) = 0 ~\text{implies}~ y := E_iE_jx = E_jE_ix, ~\text{and}~ \nabla_j\eps(y,i) = 0. \]
    \end{lemma}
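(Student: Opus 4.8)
The plan is to verify (P5) by tracking how the reading words change under the two operators, in the spirit of the bookkeeping already carried out for (P3)--(P4) in Lemma~\ref{lem:P1-P2}, and to do this first after a reduction to $|i-j|=1$. Indeed, if $|i-j|\ge 2$ then $E_j$ changes a square of value $j+1$ to value $j$, and neither $j$ nor $j+1$ lies in $\{i,i+1\}$, so $E_j$ creates, destroys, and re-overlaps no square relevant to $w_i$; hence $w_i(E_jx)=w_i(x)$, and symmetrically $w_j(E_ix)=w_j(x)$ and $w_i(F_jx)=w_i(x)$. Thus $E_i$ and $E_j$ act on disjoint data: they commute, $\Delta_i\delta(x,j)=0$ holds automatically, and $\nabla_j\eps(y,i)=0$. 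So we may assume $|i-j|=1$; I will write the argument for $j=i+1$, the case $j=i-1$ being entirely analogous.

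Next I unpack the hypothesis. Let $s$ be the square on which $E_i$ acts, so $s$ carries $i+1$ in $x$ and $i$ in $E_ix$. By (P3), $\Delta_i\delta(x,i+1)=0$ is equivalent to $\Delta_i\eps(x,i+1)=-1$, and comparing with the case analysis in the proof of Lemma~\ref{lem:P1-P2} this says that \emph{either} (a) $s$ is $(i+1,i+2)$-overlapped and the $(i+2)$-square it overlaps becomes a \emph{matched} opening bracket of $w_{i+1}(E_ix)$, \emph{or} (b) $s$ is not $(i+1,i+2)$-overlapped and the closing bracket it contributes to $w_{i+1}(x)$ is \emph{unmatched}. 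In case (a), $w_{i+1}(E_ix)$ is $w_{i+1}(x)$ with one matched ``('' inserted; in case (b), it is $w_{i+1}(x)$ with one unmatched ``)'' deleted. Both operations leave the list of \emph{un}matched opening brackets — that is, the sequence of unmatched $(i+2)$-squares in reading order — unchanged. Therefore $E_{i+1}$ acts on the very same square $u$ in $x$ and in $E_ix$, and $u\neq s$ since $u$ has value $i+2$.

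It remains to show that $E_i$ still acts on $s$ in $E_{i+1}x$; granting this, $E_iE_{i+1}x=E_{i+1}E_ix$, because both are obtained from $x$ by the two independent changes at $s$ and at $u$, and the second claim of (P5) follows too: since $F_{i+1}E_{i+1}$ is the identity wherever defined, $F_{i+1}y=E_ix$, so $\nabla_{i+1}\eps(y,i)=\eps(E_{i+1}E_ix,i)-\eps(E_ix,i)=\Delta_{i+1}\eps(E_ix,i)$, which by (P3) applied at $E_ix$ (with operators $i+1$ and $i$) equals $0$ precisely when $\Delta_{i+1}\delta(E_ix,i)=-1$, i.e. when applying $E_{i+1}$ to $E_ix$ raises the number of unmatched $(i+1)$-squares of $w_i$ by one. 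To see that $E_i$ acts on $s$ in $E_{i+1}x$, note that $E_{i+1}$ turns $u$ from $i+2$ into $i+1$; since $u$ was not $(i+1,i+2)$-overlapped (it appeared in $w_{i+1}(x)$), the only effect on $w_i$ is that either $u$ itself becomes a new opening bracket, or — if some $i$-square sits above $u$ in its column — $u$ becomes $(i,i+1)$-overlapped with that square, deleting the latter's closing bracket from $w_i$. In either case a short analysis of the resulting local configuration (of the type drawn in the proof of Proposition~\ref{prop:same-type}, and using Remark~\ref{rem:cycle} together with the hypothesis to pin down the reading-order position of $u$ relative to $s$) shows that the matching of $w_i$ near $s$ is undisturbed, that $s$ remains the leftmost unmatched $(i+1)$-square, and that the count of unmatched $(i+1)$'s of $w_i$ changes as required; this finishes the proof.

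I expect the real work — and the only place where the hypothesis $\Delta_i\delta(x,i+1)=0$ is genuinely used beyond the reduction — to be exactly this local configuration analysis: one must enumerate the sub-cases (overlapped vs.\ non-overlapped $s$; an $i$-square present or absent above $u$; the possible reading-order positions of $u$ relative to $s$) and verify in each that neither the matching of $w_i$ around $s$ nor the number of unmatched $(i+1)$'s shifts unexpectedly, the ``bad'' configurations being precisely the ones ruled out by the hypothesis. As in Lemma~\ref{lem:P1-P2} and Proposition~\ref{prop:same-type}, this is where the argument becomes technical, and it is the main obstacle.
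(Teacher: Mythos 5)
Your setup is sound and tracks the paper's own strategy: reduce to $|i-j|=1$, unpack $\Delta_i\delta(x,i+1)=0$ into the two scenarios from the proof of Lemma \ref{lem:P1-P2} (your (a) and (b) are the paper's Cases 2 and 1), observe that in either scenario the set of unmatched $(i+2)$-squares of $w_{i+1}$ is unchanged by $E_i$ so that $E_{i+1}$ acts at the same square in $x$ and $E_ix$, and then reduce everything to showing that $E_i$ still acts at $s$ in $E_{i+1}x$. The dichotomy you state for the effect of $E_{i+1}$ on $w_i$ (a new opening bracket at $u$, or an $(i,i+1)$-overlap that deletes a closing bracket) is also correct. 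But the proof stops exactly where the actual content begins. You write that ``a short analysis of the resulting local configuration'' shows $s$ remains the leftmost unmatched $i+1$, and then concede that this analysis is ``the main obstacle'' --- so the claim is asserted, not proved. This is a genuine gap, not a routine verification: the dangerous situation is your case (a), where $s$ is $(i+1,i+2)$-overlapped and the square $t$ on which $E_{i+1}$ acts could a priori lie \emph{left} of $s$ in reading order without being $(i,i+1)$-overlapped in $E_{i+1}x$; if that happened, $E_{i+1}$ would create an unmatched $i+1$ to the left of $s$ and $E_i$ would act at a different square, breaking commutativity.

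Ruling this out is not a local check on a few adjacent cells. The paper does it by pinning $t$, $s$, and the overlapping square $u$ to three consecutive rows $p$, $p-1$, $p+1$ and then counting along entire rows: with $a$ the number of non-$(i+1,i+2)$-overlapped $(i+2)$-squares on row $p+1$, $b$ the number of $(i+1)$-squares on row $p$, and $c$ the number of non-$(i,i+1)$-overlapped $i$-squares on row $p-1$, the hypotheses force $a\ge c$, $b\le c$, and $a+1\le b$ simultaneously, a contradiction. That global counting argument (and its companion in the $\nabla_{i+1}\eps(y,i)=0$ verification, where one must locate $t$ relative to the rightmost unmatched $i$ of $w_i(E_ix)$) is the theorem; without it your proposal is an accurate outline of the paper's proof rather than a proof.
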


    \begin{proof}
        If $|i-j| > 1$, then (P5) follows from the definition.

        If $j = i+1$, let $s$ be the square containing the leftmost unmatched $i+1$ in $w_i(x)$, and $t$ be the square containing the leftmost unmatched $i+2$ in $w_{i+1}(x)$. We will prove the following diagram
        \[\begin{tikzcd}[sep=small]
    	x && {E_ix} \\
    	\\
    	{E_{i+1}x} && y
    	\arrow["s", from=1-1, to=1-3]
    	\arrow["t", from=1-3, to=3-3]
    	\arrow["t"', from=1-1, to=3-1]
    	\arrow["s"', from=3-1, to=3-3]
        \end{tikzcd}\]
        i.e. $E_i$ acts on both $x$ and $E_{i+1}x$ at $s$, and $E_{i+1}$ acts on both $x$ and $E_ix$ at $t$.
        
        By definition, $E_i$ and $E_{i+1}$ act on $x$ at $s$ and $t$, respectively. From the proof of Lemma \ref{lem:P1-P2}, $\Delta_i\delta(x,j+1) = 0$ implies either $s$ is not $(i+1,i+2)$-overlapped in $x$ and removing $s$ does not break up a pair of parentheses in $w_{i+1}(x)$ (Figure \ref{subfig:P34nonoverlapb}), or $s$ is $(i+1,i+2)$-overlapped with some $(i+2)$-square $u$ in $x$ and this $i+2$ creates a new pair of parentheses in $w_{i+1}(E(x))$ (Figure \ref{subfig:P34overlapa}).

        \textit{Case 1:} $s$ is not $(i+1,i+2)$-overlapped in $x$ and removing $s$ does not break up a pair of parentheses in $w_{i+1}(x)$ (Figure \ref{subfig:P34nonoverlapb}). Observer that in this case, the sets of unmatched $(i+2)$ in $w_{i+1}(x)$ and $w_{i+1}(E_ix)$ are the same. Thus, $t$ still contains the leftmost unmatched $(i+2)$ in $w_{i+1}(E_ix)$, so $E_{i+1}$ acts on $E_ix$ at $t$.
        
        Now we prove that $E_i$ acts on $E_{i+1}x$ at $s$. The key observation is that $t$ is right of $s$ since removing $s$ does not break up a pair of parentheses in $w_{i+1}(x)$. Thus, when $E_{i+1}$ acts on $x$ turning $t$ from $i+2$ to $i+1$, this $i+1$ is still right of $s$. Hence, $s$ is still the leftmost unmatched $i+1$ in $w_i(E_{i+1}(x))$, so $E_i$ also acts on $E_{i+1}x$ at $s$. Therefore, $E_i(E_{i+1}(x)) = E_{i+1}(E_i(x))$.

        Finally, let $y := E_i(E_{i+1}(x)) = E_{i+1}(E_i(x))$, we need to prove that $\nabla_{i+1}\eps(y,i) = \eps(y,i) - \eps(F_{i+1}(y),i) = 0$. Note that $F_{i+1}(y)$ is $E_i(x)$, so this is equivalent to proving that $w_i(E_{i+1}(E_i(x)))$ and $w_i(E_i(x))$ have the same number of unmatched $i$. Recall that $E_{i+1}$ acts on $E_i(x)$ at $t$, which is right of $s$, the rightmost unmatched $i$ in $w_i(E_i(x))$. Thus, adding the $i+1$ in $t$ to $w_i(E_i(x))$ does not create a new pair of parentheses, so the number of unmatched $i$ stays the same (see Figure \ref{fig:P5case1}).
        
        \begin{figure}[h!]
            \centering
            \includegraphics[scale = 2]{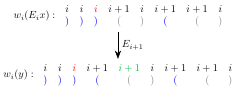}
            \caption{Adding $t$ (in green) to the right of $s$ (in red) does not change the number of unmatched $i$}
            \label{fig:P5case1}
        \end{figure}

        \textit{Case 2:} $s$ is $(i+1,i+2)$-overlapped with some $(i+2)$-square $u$ in $x$ and this $i+2$ creates a new pair of parentheses in $w_{i+1}(E_ix)$ (Figure \ref{subfig:P34overlapa}). We claim that either $t$ is right of $s$ like the above case, or $t$ is $(i,i+1)$-overlapped in $E_{i+1}x$.

        If that is not the case, then $t$ is left of $s$ and $t$ is not $(i,i+1)$-overlapped in $E_{i+1}x$. This means that $t$ is at least a row below $s$. However, the $i+2$ in $u$ creates a new pair of parentheses in $w_{i+1}(E_ix)$, so $u$ is left of $t$. Since $t$ is the leftmost $(i+2)$-square on its row by Proposition \ref{prop:well-defined}, $u$ is at least a row below $t$. However, $s$ and $u$ are exactly two rows apart. Hence, we must have the following setup: $t$ belongs to some row $p$, $s$ belongs to row $p-1$, and $u$ belongs to row $p+1$.
        
        In $x$, let
        \begin{itemize}
            \item $a$ be the number of non-$(i+1,i+2)$-overlapped $(i+2)$-squares on row $p+1$,
            \item $b$ be the number of $(i+1)$-squares on row $p$, and
            \item $c$ be the number of non-$(i,i+1)$-overlapped $i$-squares on row $p-1$.
        \end{itemize}
        Since $p-1$ and $p+1$ have some $(i+1,i+2)$-overlaps, in particular $s$ and $u$, we have
        \[ a \geq c. \]
        On the other hand, since $t$ is not $(i,i+1)$-overlapped in $E_{i+1}(x)$, all $(i+1)$-squares on row $p$ are also not $(i,i+1)$-overlapped. $s$ of row $p-1$ is the leftmost unmatched $i+1$ in $w_i(x)$, so every non-$(i,i+1)$-overlapped $(i+1)$-square of row $p$ is matched with some non-$(i,i+1)$-overlapped $i$-square of row $p-1$. This means that 
        \[ b \leq c. \]
        Similarly, $t$ is the leftmost unmatched $i+2$ in $w_{i+1}(x)$, so every non-$(i+1,i+2)$-overlapped $(i+2)$-square of $p+1$ is matched with some non-$(i+1,i+2)$-overlapped $(i+1)$-square of $p$. Furthermore, in $E_i(x)$, $u$ is also matched with another non-$(i+1,i+2)$-overlapped $(i+1)$-square of $p$. Thus, 
        \[ a+1 \leq b. \]
        Therefore, we have $a+1\leq b \leq c$, so $a < c$, contradiction. Hence, either $t$ is right of $s$ like the above case, or $t$ is $(i,i+1)$-overlapped in $E_{i+1}(x)$.

        If $t$ is right of $s$, then the argument is the same as Case 1. Else, $t$ is $(i,i+1)$-overlapped with an $i$-square $v$ in $E_{i+1}(x)$. Note that in this case, we still have $E_{i+1}$ acting on both $x$ and $E_ix$ at $t$. Also, $v$ is on row $p-2$ while $s$ is on row $p-1$, so $v$ is right of $s$. Hence, in $w_i(E_{i+1}(x))$, there is a new unmatched $i+1$ to the right of $s$, so $E_i$ still acts on $E_{i+1}x$ at $s$ (see Figure \ref{fig:P5case2}). This proves that $E_i(E_{i+1}(x)) = E_{i+1}(E_i(x))$.
        
        \begin{figure}[h!]
            \centering
            \includegraphics[scale = 2]{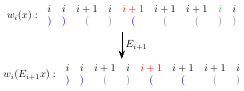}
            \caption{Removing $v$ (in green) right of $s$ (in red) creates a new unmatched $i+1$ right of $s$}
            \label{fig:P5case2}
        \end{figure}

        Finally, let $y := E_i(E_{i+1}(x)) = E_{i+1}(E_i(x))$, we prove that $E_{i+1}(E_i(x))$ and $E_i(x)$ have the same number of unmatched $i$. Recall that $E_{i+1}$ acting on $E_i(x)$ creates a new unmatched $i+1$ to the right of $s$, the rightmost unmatched $i$ in $w_i(E_i(x))$. Thus, the number of unmatched $i$ stays the same.
        
        This completes the proof for the case $j = i+1$. The case $j = i-1$ is analogous.
    \end{proof}

    \begin{lemma}\label{lem:P6}
        The crystal operators defined in Section \ref{sec:crys} satisfy axioms (P6):
        \[ \Delta_i\delta(x,j) = \Delta_j\delta(x,i) = -1 ~\text{implies}~ y:= E_iE_j^2E_ix = E_jE_i^2E_jx, \]
        \[ ~\text{and}~ \nabla_i\eps(y,j) = \nabla_j\eps(y,i) = -1 \]
    \end{lemma}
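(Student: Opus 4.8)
As in Lemmas \ref{lem:P5} and \ref{lem:P1-P2}, when $|i-j|>1$ the reading words $w_i$ and $w_j$ involve disjoint collections of squares, so $E_i$ and $E_j$ commute and neither alters the other's word; hence $E_iE_j^2E_ix=E_jE_i^2E_jx$ and $\nabla_i\eps(y,j)=\nabla_j\eps(y,i)=-1$ follow by inspection. The plan is therefore to treat $j=i+1$, the case $j=i-1$ being analogous by the left--right symmetry of the construction. Write $s$ for the square where $E_i$ acts on $x$ (the leftmost unmatched $i{+}1$ in $w_i(x)$) and $t$ for the square where $E_{i+1}$ acts on $x$ (the leftmost unmatched $i{+}2$ in $w_{i+1}(x)$).

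The first step is to unpack the two hypotheses using the case analysis from the proof of Lemma \ref{lem:P1-P2}. The condition $\Delta_i\delta(x,i+1)=-1$ holds in exactly two situations: (a) $s$ is not $(i{+}1,i{+}2)$-overlapped in $x$ and deleting the $i{+}1$ at $s$ from $w_{i+1}(x)$ destroys a matched pair; or (b) $s$ is $(i{+}1,i{+}2)$-overlapped with a square $u$ lying two rows below, and the $i{+}2$ at $u$, which appears in $w_{i+1}(E_ix)$, is unmatched there. Dually, running the same analysis on the effect of $E_{i+1}$ on $w_i$, the condition $\Delta_{i+1}\delta(x,i)=-1$ splits into situations (a$'$) and (b$'$) according to whether $t$ is $(i,i{+}1)$-overlapped in $x$ and whether a matched pair of $w_i$ is created or destroyed. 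This gives at most four combinations. For each, I would trace the six operators in the two sequences $E_i,E_{i+1},E_{i+1},E_i$ and $E_{i+1},E_i,E_i,E_{i+1}$, recording which square each operator touches and how each application perturbs $w_i$ and $w_{i+1}$ — here one uses that an application of $E_i$ changes the reading words only on the column of $s$ (it moves $s$ from an $(i{+}1)$- to an $i$-square in $w_i$, and toggles the $(i{+}1,i{+}2)$-overlap status of the $i{+}2$-square below $s$ in $w_{i+1}$), with the symmetric statement for $E_{i+1}$. The aim is to exhibit the common hexagon of six squares touched, conclude that the two sequences reach the same tableau $y$, and then compute from $w_i(y)$, $w_{i+1}(y)$ against $w_i(F_{i+1}y)$, $w_{i+1}(F_iy)$ that $\nabla_i\eps(y,i{+}1)=\nabla_{i+1}\eps(y,i)=-1$.

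The main obstacle is the bookkeeping in the combinations involving the overlapped cases (b) and (b$'$), where the set of squares contributing to $w_i$ or $w_{i+1}$ genuinely changes as the sequence is applied, so one must rule out configurations in which the square freed by one step would derail the next step. As in the proof of Lemma \ref{lem:P5}, I expect these to be handled by local counting inequalities among the numbers of overlapped and non-overlapped $i$-, $(i{+}1)$- and $(i{+}2)$-squares on the three consecutive rows around $s$, $t$, $u$: the hypothesis that both increments equal $-1$ rather than $0$ pins these counts down tightly enough that the offending configurations lead to a contradiction (chains of inequalities of the type $a+1\le b\le c\le a$), while in the surviving configurations $y$ admits an explicit description from which the two $\nabla\eps$ identities are immediate. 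Proposition \ref{prop:well-defined} is used throughout to guarantee that each operator in the sequences indeed acts at the predicted leftmost square of its row.
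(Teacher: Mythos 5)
Your overall strategy matches the paper's: reduce to $j=i+1$ by symmetry, locate the squares $s$ and $t$ where $E_i$ and $E_{i+1}$ act, unpack the hypotheses via the two scenarios from the proof of Lemma \ref{lem:P1-P2} (overlapped versus non-overlapped, pair created versus pair destroyed), and then rule out bad configurations by counting overlapped and non-overlapped $i$-, $(i{+}1)$- and $(i{+}2)$-squares on three consecutive rows. Those are indeed the tools the paper uses. But the proposal stops exactly where the proof begins: everything after ``For each, I would trace the six operators'' is deferred, and that tracing is the entire content of the lemma. Concretely, you are missing the structural device that makes the tracing possible. The paper introduces the squares $u$ (the new unmatched $i{+}2$ in $w_{i+1}(E_ix)$), $v$ (the new unmatched $i{+}1$ in $w_i(E_{i+1}x)$), and crucially $p$, the square containing the \emph{second} leftmost unmatched $i{+}1$ in $w_i(E_{i+1}x)$; the whole argument is then organized around a single Main Claim, namely that $E_i$ acts on $E_{i+1}^2E_i(x)$ at $p$, from which commutativity of the two hexagon paths follows by symmetry. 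Without naming this target square, ``recording which square each operator touches'' has no anchor, and you cannot even state what has to be verified at the last step of each sequence.

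A second, related gap is that your proposed four-way case split (a)/(b) crossed with (a$'$)/(b$'$) is not the decomposition that actually works. The paper's cases are governed by the \emph{relative positions} of $t$ with respect to $s$ and $u$ (e.g.\ ``$t$ right of $s$'', ``$t$ left of $u$'', ``$t$ between $u$ and $s$''), together with whether $s$ is $(i{+}1,i{+}2)$-overlapped and whether the square above $t$ contains an $i$; these positional distinctions cut across your (a)/(b) dichotomy and are what feed the counting inequalities ($a+1\le b\le c$, etc.). You correctly anticipate that such inequalities appear, but you have not identified which configurations they must exclude, so there is no way to check that your four combinations are exhaustive or that each one closes. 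Finally, a small but real error: for $|i-j|>1$ you assert that the conclusion, including $\nabla_i\eps(y,j)=-1$, ``follows by inspection''; in fact if the operators act on disjoint squares then $\nabla_i\eps(y,j)=0$. The correct observation (which the paper makes) is that the hypothesis $\Delta_i\delta(x,j)=\Delta_j\delta(x,i)=-1$ can only hold when $|i-j|=1$, so the implication is vacuous otherwise.
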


    \begin{proof}
        The condition that $\Delta_i\delta(x,j) = \Delta_j\delta(x,i) = -1$ only happens when $|i-j| = 1$. By symmetry, we only need to consider the case when $j = i+1$.
        
        Let $s$ be the square containing the leftmost unmatched $i+1$ in $w_i(x)$, and $t$ be the square containing the leftmost unmatched $i+2$ in $w_{i+1}(x)$. The condition $\Delta_i\delta(x,i+1) = \Delta_{i+1}\delta(x,i) = -1$ means that $w_{i+1}(E_ix)$ has one more unmatched $i+2$ than $w_{i+1}(x)$, let $u$ be the square containing this new unmatched $i+2$. Similarly, $w_i(E_{i+1}x)$ has one more unmatched $i+1$ than $w_i(x)$, let $v$ be the square containing this new unmatched $i+1$. Let us recall the possible scenarios involving $t$ and $v$.

        \begin{itemize}
            \item Scenario 1: $t$ is not $(i,i+1)$-overlapped in $E_{i+1}x$, and the new $i+1$ from $t$ does not create a new pair of parentheses in $w_i(E_{i+1}x)$ (see Figure \ref{subfig:P6_vandt_a}).
            \item Scenario 2: $t$ is $(i,i+1)$-overlapped with some square $t'$ in $E_{i+1}x$, and removing $t'$ breaks up a pair of parentheses in $w_i(E_{i+1}x)$ (see Figure \ref{subfig:P6_vandt_b}).
        \end{itemize}

        Note that in scenario 1, $v$ is weakly left of $t$. In particular, $v$ and $t$ may be the same square. The scenarios for $s$ and $u$ are similar.

        \begin{figure}[h!]
         \centering
            \begin{subfigure}[b]{\textwidth}
                \centering
                \includegraphics[scale = 2]{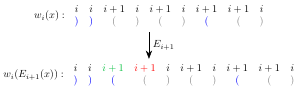}
                \caption{Adding $t$ (in red) gives new unmatched $v$ (in green) - Scenario 1}
                \label{subfig:P6_vandt_a}
            \end{subfigure}
            
         \vspace{2em}
         
            \begin{subfigure}[b]{\textwidth}
                \centering
                \includegraphics[scale = 2]{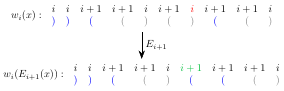}
                \caption{Removing $t'$ (in red) gives new unmatched $v$ (in green) - Scenario 2}
                \label{subfig:P6_vandt_b}
            \end{subfigure}

            \caption{Possible scenarios involving $t$ and $v$}
            \label{fig:P6_vandt}
        \end{figure}

        Since $w_i(E_{i+1}x)$ has only one more unmatched $i+1$ than $w_i(x)$, $s$ still contains one of the leftmost two unmatched $(i+1)$'s in $w_i(E_{i+1}x)$. Let $p$ be the square containing the other unmatched $i+1$. There are three different plots involving $s$, $p$, and $v$.

        \begin{itemize}
            \item Plot 1: $v$ is the leftmost unmatched $i+1$ in $w_i(E_{i+1}x)$, then $v$ is $p$, and $s$ is the second leftmost unmatched $i+1$.
            \item Plot 2: $v$ is the second leftmost unmatched $i+1$ in $w_i(E_{i+1}x)$, then $v$ is $p$ and $s$ is the leftmost unmatched $i+1$.
            \item Plot 3: $v$ is neither, then $s$ is the leftmost unmatched $i+1$, $p$ is the second leftmost unmatched $i+1$, and $v$ is right of $p$.
        \end{itemize}

        In plot 1, $E_i^2$ acts on $E_{i+1}x$ at $p$ and then $s$ (see Figure \ref{subfig:P6diagrama}). In plots 2 and 3, $E_i^2$ acts on $E_{i+1}x$ at $s$ and then $p$ (see Figure \ref{subfig:P6diagramb}). Hence, $E_i^2$ always acts on $E_{i+1}x$ at $s$ and $p$. 
        
        {\bf {Main claim}}: we shall prove that $E_i$ also acts on $E_{i+1}^2E_i(x)$ at $p$, i.e. the ? in Figure \ref{fig:P6diagram} is $p$.
        
        This would imply that $E_i^2$ acts on $E_{i+1}x$ at the same squares as $E_i$ acting on $x$ and $E_{i+1}^2E_i(x)$. By symmetry, we would have $E_{i+1}^2$ acts on $E_ix$ at the same squares as $E_{i+1}$ acting on $x$ and $E_i^2E_{i+1}(x)$. This would prove $E_iE_{i+1}^2E_ix = E_{i+1}E_i^2E_{i+1}x$.

        \begin{figure}[h!]
         \centering
            \begin{subfigure}[b]{0.4\textwidth}
                \centering
                \includegraphics[scale = 1]{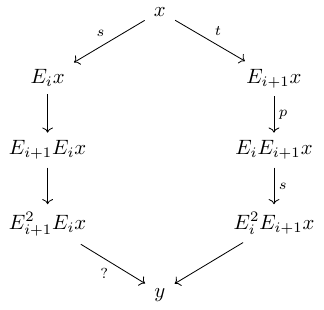}
                \caption{}
                \label{subfig:P6diagrama}
            \end{subfigure}
         \quad\quad
            \begin{subfigure}[b]{0.4\textwidth}
                \centering
                \includegraphics[scale = 1]{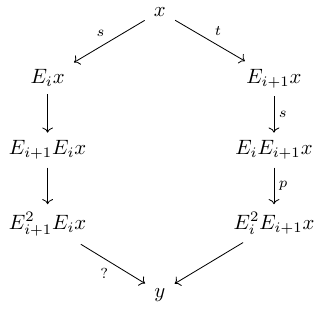}
                \caption{}
                \label{subfig:P6diagramb}
            \end{subfigure}

            \caption{}
            \label{fig:P6diagram}
        \end{figure}

        Our case analysis consists of the following cases.

        \begin{itemize}
            \item Case 1: $s$ is not $(i+1,i+2)$-overlapped in $x$.
            \begin{itemize}
                \item Case 1.1: $t$ is right of $s$.
                \begin{itemize}
                    \item Case 1.1.1: $u$ is not $(i,i+1)$-overlapped in $E_{i+1}^2E_i(x)$.
                    \item Case 1.1.2: $u$ is $(i,i+1)$-overlapped in $E_{i+1}^2E_i(x)$.
                \end{itemize}
                \item Case 1.2: $t$ is left of $u$.
            \end{itemize}
            \item Case 2: $s$ is $(i+1,i+2)$-overlapped in $x$.
            \begin{itemize}
                \item Case 2.1: The square directly above $t$ contains an $i$.
                \item Case 2.2: The square directly above $t$ does not contain an $i$.
            \end{itemize}
        \end{itemize}

        \textit{Case 1:} $s$ is not $(i+1,i+2)$-overlapped in $x$. Then, $t$ is either right of $s$ or left of $u$, this is because $u$ in this case has to contain the very first newly unmatched $i+2$ left of $s$. 

        \quad\textit{Case 1.1:} If $t$ is right of $s$, then the two leftmost unmatched $(i+2)$'s in $E_i(x)$ are $u$ and $t$. Hence, $E_{i+1}^2$ acts on $E_i(x)$ at $u$ and $t$. 
        
        \quad\quad\textit{Case 1.1.1:} $u$ is not $(i,i+1)$-overlapped in $E_{i+1}^2E_i(x)$,  then $u$ is an $i+1$ left of $s$ in $w_i(E_{i+1}^2E_i(x))$, so $u$ is matched. Thus, the only new unmatched $i+1$ in $w_i(E_{i+1}^2E_i(x))$ is in $v$. Hence, the leftmost unmatched $i+1$ in $w_i(E_{i+1}^2E_i(x))$ is still in $p$, so $E_i$ acts on $E_{i+1}^2E_i(x)$ at $p$. Figure \ref{fig:P6case1_1} gives an example of this case.
        
        \begin{figure}[h!]
            \centering
            \includegraphics[scale = 2]{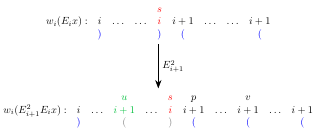}
            \caption{$u$ is matched, so the only new unmatched $i+1$ is at $v$, and $E_i$ acts on $E_{i+1}^2E_ix$ at $p$}
            \label{fig:P6case1_1}
        \end{figure}
        
        \quad\quad\textit{Case 1.1.2:} $u$ is $(i,i+1)$-overlapped in $E_{i+1}^2E_i(x)$ with some $i$-square $w$.
        
        \underline{Claim:} $w$ is left of $s$.
        
        If $w$ is right of $s$ while $u$ is left of $s$ (since we are in Case 1), then we must have the following configuration in $x$: $w$ is an $i$-square on some row $r-1$, $s$ is an $(i+1)$-square on row $r$, and $u$ is an $(i+2)$-square on row $r+1$. Furthermore, $w$ and $u$ are on the same column. Let $a$ be the number of $(i+1)$-squares right of $s$ on row $r$. These $(i+1)$-squares are all not $(i+1,i+2)$-overlapped (because $s$ is not) and hence are $(i+1)$'s in $w_{i+1}(E_ix)$. In $E_ix$, $u$ is an unmatched $i+2$, so there are at least $a$ non-$(i+1,i+2)$-overlapped $(i+2)$-squares right of $u$ on row $r+1$. Note that directly above $u$ is an $i$-square $w$, so above the other $a$ non-$(i+1,i+2)$-overlapped $(i+2)$-squares on row $r+1$ are also $i$-squares on row $r-1$. Thus, there are at least $a+1$ non-$(i,i+1)$-overlapped $i$-squares on row $r-1$. Since there are only $a$ $(i+1)$-squares right of $s$ on row $r$, it is impossible for $s$ to be unmatched. This is a contradiction. The figure below illustrates this case ($w$ is right of $s$) with $a = 3$, where $s$ is colored red, $u$ is colored blue, and $w$ is colored green.
        \[\begin{tikzcd}[sep=tiny]
    	{\textcolor{goodgreen}{i}} && i && i && i \\
    	& {\textcolor{red}{i+1}} && {i+1} && {i+1} && {i+1} \\
    	{\textcolor{blue}{i+2}} && {i+2} && {i+2} && {i+2}
        \end{tikzcd}\]

        Therefore, $w$ is left of $s$. As a result, in $w_i(E_{i+1}^2E_i(x))$, the removal of $w$ does not create a new unmatched $i+1$. Thus, similar to Case 1.1.1, the only new unmatched $i+1$ in $w_i(E_{i+1}^2E_i(x))$ is in $v$. Hence, the leftmost unmatched $i+1$ in $w_i(E_{i+1}^2E_i(x))$ is still in $p$, so $E_i$ acts on $E_{i+1}^2E_i(x)$ at $p$.

        \quad\textit{Case 1.2:} If $t$ is left of $u$, then $E_{i+1}^2$ acts on $E_i(x)$ at $t$ and some square $q$. $q$ is left of $u$, and $u$ is left of $s$, so $q$ is left of $s$. By the same reasoning as in Case 1.1, the new $i+1$ in $q$ does not create a new unmatched $i+1$ in $w_i(E_{i+1}^2E_i(x))$. Thus, the only new unmatched $i+1$ in $w_i(E_{i+1}^2E_i(x))$ is still in $v$. Hence, the leftmost unmatched $i+1$ in $w_i(E_{i+1}^2E_i(x))$ is still in $p$, so $E_i$ still acts on $E_{i+1}^2E_i(x)$ at $p$.

        \textit{Case 2:} $s$ is $(i+1,i+2)$-overlapped in $x$. In this case, we still have $u$ is left of $s$, because it either coincides with the cell $s$ is overlapped with, or is to the left of it. If $t$ is right of $s$ or left of $u$, then the argument is the same as Case 1. Thus, we only need to consider the case where $t$ is left of $s$ and right of $u$.
        
        Suppose $s$ is on row $r-1$, then $s$ is $(i+1,i+2)$-overlapped with square $s'$ on row $r+1$. By definition, $u$ is the new unmatched $i+2$ in $w_{i+1}(E_ix)$ immediately left of $s'$. Hence, in order for $t$ to be right of $u$, $t$ has to be right of $s'$. Note that $t$ is also the leftmost $(i+2)$-square of some row, so $t$ has to be the leftmost $(i+2)$-square on row $r$. Let $t'$ be the square directly above $t$ on row $r-2$. We have two subcases.

        \quad\textit{Case 2.1:} $t'$ contains an $i$. In $E_{i+1}(x)$, $t$ is $(i,i+1)$-overlapped with $t'$, so row $r-2$ has one fewer non-$(i,i+1)$-overlapped $i$-squares. Thus, row $r-1$ has one more unmatched non-$(i,i+1)$-overlapped $(i+1)$-squares. This means that $v$ and $p$ are right of $s$: $v$ because it is the new unmatched $i+1$ in row $r-1$, and $p$ because it has to be between $s$ and $v$. Then the argument follows similar to Case 1.1. The figure below illustrates this case, where $s$ is colored red, and $t$ is colored blue.
        \[\begin{tikzcd}[sep=tiny]
    	i && i \\
    	& {\textcolor{red}{i+1}} && {i+1} && {i+1} && {i+1} \\
    	{\textcolor{blue}{i+2}} \\
    	&&& {} \\
    	&&& {} \\
    	&&& {} \\
    	i && i \\
    	& {\textcolor{red}{i}} && {i+1} && {i+1} && {i+1} \\
    	{\textcolor{blue}{i+1}} &&& p && v
    	\arrow[from=4-4, to=6-4]
    	\arrow[from=9-6, to=8-6]
    	\arrow[from=9-4, to=8-4]
        \end{tikzcd}\]

        \quad\textit{Case 2.2:} $t'$ does not contains an $i$. 

        \underline{Claim:} $u$ is the leftmost $(i+2)$-square on row $r+1$.

        If $u$ is not the leftmost $(i+2)$-square on row $r+1$, then let $a$ be the number of non-$(i+1,i+2)$-overlapped $(i+2)$-squares on row $r+1$ in $x$. These are the $(i+2)$-squares left of $s'$ on row $r+1$. Because $u$ is the new unmatched $i+2$ in $w_{i+1}(E_ix)$ immediately left of $s'$, but $u$ is not on row $r+1$, $s'$ and all $a$ $(i+2)$-squares left of it on row $r+1$ are matched in $w_{i+1}(E_ix)$. This means that there are at least $a+1$ $(i+1)$-squares left of $t$ on row $r$. Note that $t'$ contains some number smaller than $i$ (or is empty), so all $(i+1)$-squares left of $t$ on row $r$ are not $(i,i+1)$-overlapped. In particular, at least $a+1$ $(i+1)$-squares left of $t$ on row $r$ are $(i+1)'s$ in $w_i(x)$. Thus, in order for $s$ to be unmatched, there are at least $a+1$ non-$(i,i+1)$-overlapped $i$-squares left of $s$ on row $r-1$ in $x$. However, below $s$ is an $(i+2)$-square $s'$, so the squares below the $a+1$ non-$(i,i+1)$-overlapped $i$-squares left of $s$ on row $r-1$ have to be $a+1$ non-$(i+1,i+2)$-overlapped $(i+2)$-squares on row $r+1$, which is a contradiction. The figure below illustrates this case with $a = 3$, where $s$ is colored red, $t$ is colored blue.

        \[\begin{tikzcd}[sep=tiny]
    	&&&&&&&&& {<i} \\
    	i && i && i && i && {\textcolor{red}{i+1}} \\
    	& {i+1} && {i+1} && {i+1} && {i+1} && {\textcolor{blue}{i+2}} \\
    	&& {i+2} && {i+2} && {i+2} && {i+2}
        \end{tikzcd}\]
        
        Thus, $u$ is the leftmost $(i+2)$-square on row $r+1$. Let $a$ be the number of non-$(i+1,i+2)$-overlapped $(i+2)$-squares on row $r+1$ in $x$. Repeating the same argument as above, the square directly above $u$ on row $r-1$ has to contain an $i$, and there are exactly $a$ non-$(i,i+1)$-overlapped $i$-squares on row $r-1$ and $a$ $(i+1)$-squares on row $r$.
        
        Since $t$ is not $(i,i+1)$-overlapped in $E_{i+1}(x)$, $p=v$ is the leftmost $(i+1)$-square on row $r$. In $E_{i+1}^2E_i(x)$, row $r-1$ gains one $i$ in $s$ and loses one $i$ that is $(i,i+1)$-overlapped with $u$, so row $r-1$ has $a$ non-$(i,i+1)$-overlapped $i$-squares. Row $r$ has one more non-$(i,i+1)$-overlapped $(i+1)$-square in $t$, so row $r$ has $a+1$ $i$-squares. Hence, the leftmost unmatched $i+1$ is in $v$ as desired. The figure below illustrates this case, where $s$ is colored red, $t$ is colored blue, and $u$ is colored green.

        \[\begin{tikzcd}[sep=tiny]
            &&&&&&& {<i} \\
    	i && i && i && {\textcolor{red}{i+1}} \\
    	& {i+1} && {i+1} && {i+1} && {\textcolor{blue}{i+2}} \\
    	{\textcolor{goodgreen}{i+2}} && {i+2} && {i+2} && {i+2} \\
    	&&& {} \\
    	&&& {} \\
    	&&& {} \\
            &&&&&&& {<i} \\
    	i && i && i && {\textcolor{red}{i}} \\
    	& {i+1} && {i+1} && {i+1} && {\textcolor{blue}{i+1}} \\
    	{\textcolor{goodgreen}{i+1}} && {i+2} && {i+2} && {i+2} \\
            & {p=v}
    	\arrow[from=5-4, to=7-4]
    	\arrow[from=12-2, to=10-2]
        \end{tikzcd}\]

        Hence, we proved that $E_i$ acts on $E_{i+1}^2E_i(x)$ at $p$. As discussed in the beginning, we have $E_iE_{i+1}^2E_i(x) = E_{i+1}E_i^2E_{i+1}(x)$.

        Finally, let $y:= E_iE_{i+1}^2E_i(x) = E_{i+1}E_i^2E_{i+1}(x)$, we need to prove that $\nabla_i\eps(y,i+1) = \nabla_{i+1}\eps(y,i) = -1$. Note that $\nabla_i\eps(y,i+1) = -1$ is equivalent to $w_{i+1}(E_{i+1}^2E_i(x))$ having one more unmatched $i+1$ than $w_{i+1}(y)$. Recall that $y$ is obtained from $E_{i+1}^2E_i(x)$ by changing the $i+1$ in $p$ to an $i$.

        In $w_{i+1}(E_iE_{i+1}^2(x))$, $t$ contains an unmatched $i+1$. Thus, the statement is immediate if $p$ is left of $t$. This is because changing the $i+1$ in $p$ to an $i$ in this case either remove an $i+1$ left of $t$ or add an $i+2$ left of $t$ in $w_{i+1}(E_iE_{i+1}^2(x))$.
        
        Now we consider the case where $p$ is right of $t$. Note that $v$ is left of $t$ implies $p$ is left of $t$, so this only happens when $v$ is right of $t$. For $v$ to be right of $t$, we must have the following configuration in $x$: $t$ is an $(i+2)$-square on row $r+1$, the square $t'$ directly above $t$ on row $r-1$ contains an $i$, and $v$ and $p$ are $(i+1)$-squares on row $r$, with $p$ being weakly left of $v$. Let $a$ be the number of non-$(i,i+1)$-overlapped $(i+1)$-squares right of $v$ on row $r$. For $v$ to be unmatched in $w_{i}(E_{i+1}(x))$, there must be exactly $a+1$ non-$(i,i+1)$-overlapped $i$-squares on row $r-1$. The leftmost such square is $t'$ since $t$ is the leftmost $(i+2)$-square on row $r+1$. Thus, row $r+1$ has at most $a+1$ non-$(i+1,i+2)$-overlapped $(i+2)$-squares. However, $t$ is unmatched in $w_{i+1}(x)$, so $v$ has to be $(i+1,i+2)$-overlapped. This implies that $p$ is also $(i+1,i+2)$-overlapped with some $(i+2)$-square on row $r+2$ because $p$ is weakly left of $v$ on row $r$.

        Therefore, when $E_i$ acts on $E_{i+1}^2E_i(x)$, changing the $i+1$ in $p$ into an $i$, $w_{i+1}(y)$ has an extra $i+2$ on row $r+2$. This $(i+2)$-square is left of $t$, which contains an unmatched $i+1$. Hence, $w_{i+1}(y)$ has one fewer unmatched $i+1$ than $w_{i+1}(E_{i+1}^2E_i(x))$. This completes the proof.
    \end{proof}

    \begin{lemma}\label{lem:P5'6'}
        The crystal operators defined in Section \ref{sec:crys} satisfy axioms (P5') and (P6'):
        \begin{itemize}
            \item[(P5')] $\nabla_i\eps(x,j) = 0$ implies $y := F_iF_jx = F_jF_ix$, and $\Delta_j\delta(y,i) = 0$.
            \item[(P6')] $\nabla_i\eps(x,j) = \nabla_j\eps(x,i) = -1$ implies $y:= F_iF_j^2F_ix = F_jF_i^2F_jx$, and $\Delta_i\delta(y,j) = \Delta_j\delta(y,i) = -1$
        \end{itemize}
    \end{lemma}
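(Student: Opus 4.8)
The plan is to deduce (P5$'$) and (P6$'$) from Lemmas \ref{lem:P5} and \ref{lem:P6} by means of an involution of the set of shuffle tableaux that reverses every crystal edge. Indeed, (P5$'$) and (P6$'$) are obtained from (P5) and (P6) by the dictionary $E_i\leftrightarrow F_i$, $\delta(\cdot,i)\leftrightarrow-\eps(\cdot,i)$, $\Delta_i\leftrightarrow\nabla_i$ (so that, e.g., $\nabla_i\eps(x,j)=0$ is the mirror of $\Delta_i\delta(x,j)=0$, and the conclusion $\Delta_j\delta(y,i)=0$ is the mirror of $\nabla_j\eps(y,i)=0$); an edge-reversing involution of the underlying set therefore turns one pair of axioms into the other.

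I would take the involution $T\mapsto T^{\vee}$ defined by rotating $T$ by $180^{\circ}$ inside a bounding rectangle and then replacing every entry $k$ by $N+1-k$, where $N$ is a bound on the entries and the rectangle is chosen with the parity that preserves the ``checkerboard'' placement of the filled cells of a shuffle tableau (every filled cell $(r,c)$ of a shuffle tableau has $r+c$ even). The first claim to verify is that $T^{\vee}$ is again a shuffle tableau: a shuffle tableau is the checkerboard interleaving of two SSYT $T_R$, $T_B$ with $\mu_B/\nu_B\ll\mu_R/\nu_R$, and rotation-plus-complementation sends SSYT to SSYT, carries the relation $\ll$ to the reverse relation on the complemented, reversed index sequences, and preserves $\ell(\mu_R)-\ell(\mu_B)\in\{0,1\}$ (when this difference is $0$ the two colors get interchanged and remain balanced; when it is $1$ they are preserved). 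So $T^{\vee}$ has the shape of a shuffle tableau, which one records as $\mu^{\vee}\oslash\nu^{\vee}$.

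The second claim is compatibility with the reading words: $T\mapsto T^{\vee}$ sends $(i,i+1)$-overlap pairs to $(N-i,N-i+1)$-overlap pairs, and the bottom-to-top, left-to-right reading of the $\{i,i+1\}$-cells of $T^{\vee}$ is, after complementation, the reverse of the corresponding reading of the $\{N-i,N-i+1\}$-cells of $T$, with opening and closing parentheses exchanged (an $i$ in $T$ becomes an $N+1-i=(N-i)+1$ in $T^{\vee}$, hence an opening parenthesis for $w_{N-i}$; an $i+1$ becomes an $N-i$, a closing parenthesis). Consequently the parenthesis matching is carried to the parenthesis matching, the leftmost unmatched $i+1$ of $w_i(T)$ corresponds to the rightmost unmatched $N-i$ of $w_{N-i}(T^{\vee})$, and one obtains $(F_iT)^{\vee}=E_{N-i}(T^{\vee})$ and $(E_iT)^{\vee}=F_{N-i}(T^{\vee})$. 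In particular $\delta(T^{\vee},N-i)=-\eps(T,i)$ and $\eps(T^{\vee},N-i)=-\delta(T,i)$, so $\nabla_i\eps(T,j)=\Delta_{N-i}\delta(T^{\vee},N-j)$, and likewise for the remaining increments. It then remains to feed a tableau $x$ with $F_ix$, $F_jx$ defined (necessarily $|i-j|=1$, as the operators commute otherwise and both axioms are then immediate) through Lemma \ref{lem:P5} or \ref{lem:P6} applied to $x^{\vee}$ with colors $N-i$, $N-j$ -- valid since $|i-j|=1\iff|(N-i)-(N-j)|=1$ -- and to translate the conclusion back by $(\cdot)^{\vee}$; this yields (P5$'$), respectively (P6$'$), verbatim.

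The main obstacle is the first claim -- checking that rotate-and-complement is well defined on shuffle tableaux. The checkerboard-parity constraint and the asymmetry of $\mu_B/\nu_B\ll\mu_R/\nu_R$ make this a genuine, if routine, verification: one must choose the bounding rectangle correctly and then check, using Lemma \ref{lem:shuffle-props} and the behaviour of $\ll$ under reversal and complementation of the indexing sequences, both that $T^{\vee}$ is a shuffle tableau and that its reading words behave as asserted above. Everything downstream of that is mechanical. (Alternatively, if the involution is more trouble than it is worth, one can simply repeat the case analyses of Lemmas \ref{lem:P5} and \ref{lem:P6} with all arrows reversed, ``leftmost unmatched $i+1$'' replaced by ``rightmost unmatched $i$'', and rows read top-to-bottom; the same figures and case splits then apply mutatis mutandis.)
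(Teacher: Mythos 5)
Your proposal is correct, but it supplies substantially more structure than the paper, whose entire proof of this lemma is the single sentence that the argument is ``dual'' to Lemmas \ref{lem:P5} and \ref{lem:P6} --- i.e.\ the authors implicitly take your parenthetical fallback route (rerun the case analyses of those lemmas with $E_i$ and $F_i$ exchanged, ``leftmost unmatched $i+1$'' replaced by ``rightmost unmatched $i$'', and all row/column inequalities mirrored). Your primary route, a rotate-and-complement involution $T\mapsto T^{\vee}$ intertwining $F_i$ with $E_{N-i}$, is a genuinely different and in some ways preferable argument: it reduces (P5$'$), (P6$'$) to the already-proved (P5), (P6) by a single structural observation rather than by a second long case analysis, and the word-level part (reversal plus complementation preserves the parenthesis matching and exchanges leftmost unmatched $i+1$ with rightmost unmatched $i$, while $(i,i+1)$-overlap pairs go to $(N-i,N-i+1)$-overlap pairs) is standard and unproblematic. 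The one real obligation, which you correctly isolate, is that $T^{\vee}$ is again a shuffle tableau of some shape $\mu^{\vee}\oslash\nu^{\vee}$; this does check out --- choosing the bounding rectangle with $R+C$ even preserves the checkerboard parity, the red and blue families swap exactly when $\ell(\mu_R)=\ell(\mu_B)$ (the ``alternates left of'' case, where complement-reversal sends $f\ll g$ to $\tilde g\ll\tilde f$) and are preserved in the interlacing case, and the start/end-column inequalities of Lemma \ref{lem:shuffle-props}(b) are carried to themselves --- but note that since Lemmas \ref{lem:P5} and \ref{lem:P6} hold for shuffle tableaux of arbitrary shape, it is enough that $T^{\vee}$ lands in \emph{some} shuffle shape, not the original one. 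Either route is valid; yours buys a cleaner reduction at the cost of the well-definedness verification, while the paper's buys brevity at the cost of leaving the duality entirely to the reader.
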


    \begin{proof}
        The proof for this lemma is dual to those of Lemma \ref{lem:P5} and \ref{lem:P6}.
    \end{proof}

\section{Generalized Littlewood-Richardson rule}\label{sec:LR-rule}

    We begin with the analogue of Yamanouchi tableaux.

    \begin{definition}[Yamanouchi shuffle tableau]\label{def:Yamanouchi}
        A shuffle tableau $T$ of shape $\mu\oslash \nu$ is called \textbf{Yamanouchi} if no crystal operator $E_i$ can be applied on $T$.
    \end{definition}

    As an immediate corollary of Theorem \ref{thm:typeA-crystal} we obtain the following generalized Littlewood-Richardson rule.

    \begin{thm}\label{thm:LR-rule}
        For any partitions $\mu,\nu$, any Temperley-Lieb type $\tau$, and any partition $\lambda$, the coefficient of the Schur function $s_\lambda$ in $\Imm^{\TL}_\tau(A_{\mu,\nu})$ is the number of Yamanouchi shuffle tableaux of shape $\mu\oslash\nu$, Temperley-Lieb type $\tau$, and content $\lambda$.
    \end{thm}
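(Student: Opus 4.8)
The plan is to read the statement off Corollary~\ref{cor:shuffle-tableau}, Proposition~\ref{prop:same-type}, and Theorem~\ref{thm:typeA-crystal}; the degenerate case in which $\mu$ or $\nu$ has three equal parts is vacuous on both sides, so we may assume it does not occur. First I would check that $\omega(T)$ is the content monomial of $T$. Under the bijection $\Phi$ of Proposition~\ref{prop:cover-shuffle} the red and blue noncrossing path families of a colored cover $x$ are the SSYT $T_R,T_B$, and by the lattice-path description of $h_d$ from Section~\ref{subsec:JTmatrix} one has $\omega(x)=\omega(H)=\omega(T_R)\,\omega(T_B)=x^{\operatorname{cont}(T_R)+\operatorname{cont}(T_B)}$; since $\operatorname{cont}(T)=\operatorname{cont}(T_R)+\operatorname{cont}(T_B)$ this gives $\omega(T)=x^{\operatorname{cont}(T)}:=\prod_{k\ge 1}x_k^{m_k(T)}$, where $m_k(T)$ is the number of entries of $T$ equal to $k$. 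Hence Corollary~\ref{cor:shuffle-tableau} becomes
\[ \Imm^{\TL}_\tau(A_{\mu,\nu})=\sum_{\substack{T\text{ of shape }\mu\oslash\nu\\ \psi(T)=\tau}}x^{\operatorname{cont}(T)}. \]

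Next, by Proposition~\ref{prop:same-type} the Temperley--Lieb type is constant on each connected component of the crystal graph on shuffle tableaux of shape $\mu\oslash\nu$, so the set of $T$ with $\psi(T)=\tau$ is a union of Temperley--Lieb crystals $C$, and the display above equals $\sum_{C:\ \psi(C)=\tau}\operatorname{ch}(C)$, where $\psi(C)$ denotes the common type on $C$ and $\operatorname{ch}(C)=\sum_{T\in C}x^{\operatorname{cont}(T)}$. By Theorem~\ref{thm:typeA-crystal} each such $C$ is a connected type $A$ Kashiwara crystal; by the classification of these (Stembridge~\cite{stembridge2003local}, applied to the $\mathfrak{gl}_m$-subcrystals obtained by keeping only the operators $E_i,F_i$ with $i<m$ and the tableaux with entries $\le m$, and then letting $m\to\infty$) the crystal $C$ is isomorphic to the highest-weight crystal $B(\lambda_C)$ for a unique partition $\lambda_C$; its character is $\operatorname{ch}(C)=s_{\lambda_C}$, and its highest-weight element is the unique element of $C$ on which no $E_i$ is defined, i.e.\ the unique Yamanouchi shuffle tableau in $C$, which has content $\lambda_C$. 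Sending each $C$ with $\psi(C)=\tau$ to this Yamanouchi element is then a bijection onto the Yamanouchi shuffle tableaux of shape $\mu\oslash\nu$ and type $\tau$ (the inverse sends a Yamanouchi tableau to its component), under which $\lambda_C$ equals the content. Grouping $\sum_{C:\ \psi(C)=\tau}s_{\lambda_C}$ by the value of the content yields
\[ \Imm^{\TL}_\tau(A_{\mu,\nu})=\sum_{\lambda}\#\{\text{Yamanouchi shuffle tableaux of shape }\mu\oslash\nu,\ \text{type }\tau,\ \text{content }\lambda\}\cdot s_\lambda, \]
and extracting the coefficient of $s_\lambda$ proves the theorem.

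Essentially all the work has already been done: the load-bearing inputs are Corollary~\ref{cor:shuffle-tableau} and, above all, Theorem~\ref{thm:typeA-crystal}, whose proof (Lemmas~\ref{lem:P1-P2}, \ref{lem:P5}, \ref{lem:P6}, \ref{lem:P5'6'}) is the technical heart of the paper, so the argument here is short. The only steps that require a little care are the identification of $\omega(T)$ with the content monomial and the passage from ``$C$ satisfies Stembridge's local axioms'' to ``$\operatorname{ch}(C)$ is a single Schur function indexed by the content of the unique source of $C$'': the latter is Stembridge's structure theorem applied componentwise and stabilized in the number of variables (which is what legitimizes passing to the symmetric-function identity even though the entries of shuffle tableaux of a fixed shape are not bounded).
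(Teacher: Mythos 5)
Your argument is correct and follows the same route as the paper, which simply declares the theorem an immediate corollary of Theorem \ref{thm:typeA-crystal} combined with Corollary \ref{cor:shuffle-tableau} and Proposition \ref{prop:same-type}. You have merely spelled out the details the paper leaves implicit (the identification of $\omega(T)$ with the content monomial, and the componentwise application of Stembridge's structure theorem with stabilization in the number of variables), all of which are sound.
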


    \begin{example}\label{exp:LR-rule}
        Let $\mu = (5,5,4,3)$, $\nu = (3,2,2,0)$, and $\tau$ be the following Temperley-Lieb type
        \[ \resizebox{!}{0.15\textwidth}{
            \begin{tikzpicture}
                \draw (0,0) node[anchor=center] {$L_1$};
                \draw (0,-1) node[anchor=center] {$L_2$};
                \draw (0,-2) node[anchor=center] {$L_3$};
                \draw (0,-3) node[anchor=center] {$L_4$};
                \draw (3,0) node[anchor=center] {$R_1$};
                \draw (3,-1) node[anchor=center] {$R_2$};
                \draw (3,-2) node[anchor=center] {$R_3$};
                \draw (3,-3) node[anchor=center] {$R_4$};
                \filldraw[black] (0.5,0) circle (2pt);
                \filldraw[black] (0.5,-1) circle (2pt);
                \filldraw[black] (0.5,-2) circle (2pt);
                \filldraw[black] (0.5,-3) circle (2pt);
                \filldraw[black] (2.5,0) circle (2pt);
                \filldraw[black] (2.5,-1) circle (2pt);
                \filldraw[black] (2.5,-2) circle (2pt);
                \filldraw[black] (2.5,-3) circle (2pt);
                \draw[thick] [bend right = 90, looseness=1.25] (0.5, -1) to (0.5, 0);
                \draw[thick] [bend right = 90, looseness=1.25] (0.5, -3) to (0.5, -2);
                \draw[thick] [bend right = 90, looseness=1.25] (2.5, -1) to (2.5, -2);
                \draw[thick] [bend right = 90, looseness=1.25] (2.5, 0) to (2.5, -3);
            \end{tikzpicture}
        }. \]
        The coefficient of $s_{(5,4,1)}$ in $\Imm^{\TL}_\tau(A_{\mu,\nu})$ is 2, and we have the following two Yamanouchi shuffle tableaux of shape $\mu\oslash\nu$
        \[ \begin{ytableau}   
        \none & \none & \none & 1 & \none & 1 \\ 
        \none & \none & 1 & \none & 1 & \none & 1 \\ 
        \none & \none & \none & 2 & \none & 2 \\ 
        2 & \none & 2 & \none & 3
        \end{ytableau}, \quad\quad \begin{ytableau}    
        \none & \none & \none & 1 & \none & 1 \\ 
        \none & \none & 1 & \none & 1 & \none & 2 \\ 
        \none & \none & \none & 2 & \none & 3 \\ 
        1 & \none & 2 & \none & 2
        \end{ytableau}. \]
        We encourage the readers to check that the two tableaux above indeed have Temperley-Lieb type $\tau$.
    \end{example}

    \begin{remark} \label{remark:usualLR}
        Theorem \ref{thm:LR-rule} is a generalization of the classical Littlewood-Richardson rule. Let $\tau_0$ be the determinant, for a shuffle tableau $T$ to have Temperley-Lieb type $\tau_0$, there has to be no pair
        \[ \begin{tikzcd}[sep = tiny]
    	{} & j \\
    	i & {}
    	\arrow[no head, from=2-1, to=1-1]
    	\arrow[no head, from=1-2, to=2-2]
        \end{tikzcd}. \]
        This means that the entries in $T$ are strictly increasing along the anti-diagonals. Hence, rows and anti-diagonals of $T$ form a semi-standard Young tableau. For instance, we have
        \[ \begin{ytableau}   
        \none & \none & \none & \none & \none & \none & \none & 1 & \none & 1 \\ 
        \none & \none & \none & \none & 1 & \none & 2 & \none & 2 \\ 
        \none & \none & \none & 2 & \none & 3 \\ 
        1 & \none & 4
        \end{ytableau}~~~\longrightarrow~~~ \begin{ytableau}   
        \none & \none & 1 & 1 \\ 
        \none & 1 & 2 & 2 \\ 
        \none & 2 & 3 \\ 
        1 & 4
        \end{ytableau}. \]
        This gives a bijection between shuffle tableaux of shape $\mu\oslash\nu$, type $\tau_0$ and semi-standard Young tableaux of shape $\mu'/\nu' := (\mu-\rho)/(\nu-\rho)$, where $\rho = (n-1,\ldots,1,0)$. Furthermore, since the entries are strictly increasing along the anti-diagonals of $T$, so the difference between consecutive entries on the same column in $T$ is at least 2. Thus, each reading word $w_i(T)$ is obtained by reading $T$ row by row, so the crystal operators on $T$ are the same as on the corresponding semi-standard Young tableau. Thus, so the condition that no crystal operator $E_i$ can be applied to $T$ is equivalent to the condition of classical Yamanouchi tableaux. Therefore, the Yamanouchi shuffle tableaux of type $\tau_0$ biject with the classical Yamanouchi tableaux.
    \end{remark}

\bibliography{bibliography}
\bibliographystyle{alpha}

\end{document}